\documentclass[twoside]{amsart}

\numberwithin{equation}{section}
\usepackage{amsmath}
\usepackage{amssymb,amsfonts,amsthm,latexsym,psfrag}
\usepackage[mathscr]{euscript}
\usepackage{graphicx,color,epic,eepic,bbold,enumerate}
\usepackage[all]{xy}
\usepackage{hyperref}

\theoremstyle{plain}
        \newtheorem{theorem}{Theorem}[section]
        \newtheorem{lemma}[theorem]{Lemma}
        \newtheorem{proposition}[theorem]{Proposition}
        \newtheorem{corollary}[theorem]{Corollary}
               
\theoremstyle{definition}
        \newtheorem{definition}[theorem]{Definition}
        \newtheorem{remark}[theorem]{Remark}
        \newtheorem{example}[theorem]{Example}

\newcommand{\R}{\mathbb{R}}

\newcommand{\N}{\mathbb{N}}

\newcommand{\calC}{\mathcal{C}}

\newcommand{\calO}{\mathcal{O}}

\newcommand{\calE}{\mathcal{E}}

\newcommand{\calJ}{\mathcal{J}}
\newcommand{\calS}{\mathcal{S}}

\newcommand{\calZ}{\mathcal{Z}}

\newcommand{\sfE}{\mathsf{E}}
\newcommand{\sfJ}{\mathsf{J}}
\newcommand{\sfR}{\mathsf{R}}
\newcommand{\sfT}{\mathsf{T}}
\newcommand{\sfW}{\mathsf{W}}
\newcommand{\frg}{\mathfrak{g}}
\newcommand {\cc} [1] {\overline {{#1}}}
\newcommand {\id} {\operatorname{id}}
\newcommand {\Kern} {\operatorname{ker}}

\newcommand{\bs}{\boldsymbol}

\setcounter{section}{0}

\begin{document}
\title{Invariant Whitney Functions}

\author{Hans-Christian~Herbig and Markus Pflaum}
\address{Hans-Christian~Herbig\newline\indent
        Universidade Federal de Rio de Janeiro,
        Departamento de Matem\'atica Aplicada\newline\indent
        CEP 21941-909 - Rio de Janeiro, Brazil}
\email{herbighc@gmail.com} 
\address{Markus J. Pflaum\newline\indent
         Department of Mathematics, University of Colorado, 
         Boulder, CO 80309-0395, USA}   
\email{markus.pflaum@colorado.edu}

\keywords{compact transformation groups, stratifications, Whitney functions}
\subjclass[2010]{57S15, 58A35}
\begin{abstract} 
A theorem of Gerald Schwarz \cite[Thm.~1]{SchwaSFIACLG} says that for a linear action of a compact Lie group $G$ on a finite dimensional real vector space $V$ any smooth $G$-invariant function on $V$ can be written as a composite with the Hilbert map. We prove a similar statement for the case of Whitney functions along a subanalytic set $Z\subset V$ fulfilling some regularity assumptions. In order to deal with the case when $Z$ is not $G$-stable we use the language of groupoids.
\end{abstract}
\maketitle
\tableofcontents
\bibliographystyle{amsplain}
\section{Introduction} \label{sec:intro}

The purpose of this paper  is to present an analogue of the Theorem of Gerald Schwarz \cite[Theorem 1]{SchwaSFIACLG} on differentiable invariants of representations of a compact group for the case of invariant Whitney functions. The main difficulty to overcome is to include 
also the case of Whitney functions along subsets that are not stable under the action of the group. We develop the theory as a tool for Hochschild homology calculations for algebras of smooth functions on orbit spaces \cite{HerPflHHASFOS}. We hope that the results are of independent interest.

To explain our findings we start by recalling Schwarz's result. 
Suppose that $G$ is a compact Lie group and $V$ is a finite dimensional vector space over the field of real numbers $\mathbb R$ on which $G$ acts linearly. Without loss of generality we can assume that the action of $G$ preserves a euclidean scalar product $\langle\;,\;\rangle$, which means that we actually have an 
orthogonal representation $G\to \operatorname O(V)$ into the orthogonal group of euclidean vector space $(V,\langle\;,\;\rangle)$. For the  \emph{orbit space} of the $G$-action on $V$ we write $V/G$. 

By the Theorem of Hilbert and Weyl (see for example \cite[\S 98] {ZelCLGR}) there exists a complete system of polynomial invariants $\rho_1,\dots ,\rho_\ell\in \R[V]^G$, which we choose to be homogeneous and minimal. We refer to $\rho_1,\dots ,\rho_\ell$ as the \emph{Hilbert basis} of the representation. By the \emph{Hilbert map} we mean the vector valued regular map $\rho=(\rho_1,\dots ,\rho_\ell): V\to \mathbb R^\ell$. 
The Hilbert map  descends to a proper embedding $\cc \rho: V/G\to \R^\ell$, i.e.~the diagram 
\begin{eqnarray*}
\xymatrix{V\ar[r]^{\rho\qquad\quad}\ar[d]_{\pi}  & \:\: 
  X:=\rho(V)\subset \mathbb R^\ell&\\
  V/G\ar[ru]_{\cc\rho}&&}
\end{eqnarray*}
commutes. Here $\pi$ denotes the orbit map. We refer to $\cc\rho$ as the \emph{Hilbert embedding}.
By the Tarski-Seidenberg principle, the image 
$X=\rho(V)\subset \mathbb R^\ell$ of the polynomial map $\rho$ is a semialgebraic subset. A more constructive approach to describing the semialgebraic set $X$ has been elaborated in \cite{ProSchIDOS}.

\begin{theorem} \label{SchwaSFIACLG} Let $G\to \operatorname{O}(V)$ be a finite dimensional orthogonal representation of the compact Lie group $G$. Let $\rho_1,\dots,\rho_\ell\in\mathbb R[V]^G$ be a minimal complete system of polynomial invariants and $\rho:=(\rho_1,\dots,\rho_\ell): V\to \mathbb R^\ell$ the corresponding Hilbert map. Then the pullback 
\[\rho^*:\calC^\infty(\mathbb R^\ell)\to\calC^\infty(V)^G\] 
is split sujective. That means that there exists a continuous map $\lambda:\calC^\infty(V)^G\to\calC^\infty(\mathbb R^\ell)$ such that $\rho^*\circ \lambda$ is the identity map $\calC^\infty(V)^G\to\calC^\infty(V)^G$.
\end{theorem}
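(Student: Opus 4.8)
The plan is to separate the statement into its two assertions—plain surjectivity of $\rho^*$ and the existence of a \emph{continuous linear} section $\lambda$—and to establish surjectivity first while keeping every construction linear and continuous in $f$, so that the section falls out at the end. I would begin at the level of formal Taylor expansions and show that $\rho^*$ is surjective on jets at every point of $X=\rho(V)$. At the origin, which is fixed by the linear action, the homogeneous components of the Taylor expansion of an invariant $f\in\calC^\infty(V)^G$ are themselves invariant polynomials, and by the Theorem of Hilbert and Weyl each is a polynomial in $\rho_1,\dots,\rho_\ell$; assembling these produces a formal power series $\hat h\in\R[[y_1,\dots,y_\ell]]$ with $\widehat{\rho^*}\hat h=\hat f$. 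At a general orbit the same conclusion follows after reducing to the slice representation of the isotropy group (see below). This yields, for every $f$, a formal solution of $\rho^*h=f$ to infinite order along $X$.

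The decisive step is to upgrade this formal, pointwise solvability to a genuine smooth function $h$ on $\R^\ell$. Here I would invoke a composite-function theorem of Malgrange--Glaeser--Bierstone--Milman type: a smooth function is the pullback of a smooth function under a proper real-analytic map precisely when it is formally composite at every point and satisfies a Glaeser-type coherence condition. The Hilbert map $\rho$ is real-analytic and proper (its fibres are the compact orbits, and $\rho=\cc\rho\circ\pi$ with both factors proper), its image $X$ is subanalytic by Tarski--Seidenberg, and the formal condition is exactly what the previous paragraph supplies. The coherence condition is verified from the stratified geometry of the orbit space, typically by an induction over the orbit-type strata using the slice theorem, which models $V$ near an orbit $G\!\cdot\! a$ as $G\times_{G_a}W$ and reduces the problem to the slice representation of the smaller isotropy group $G_a$. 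I expect this passage—controlling the division and extension uniformly across the singular strata of $X$—to be the main obstacle.

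Finally, for the splitting I would arrange each of the above operations to be a continuous linear operator: the Reynolds averaging over Haar measure is continuous and linear, a Whitney extension from $X$ to $\R^\ell$ can be chosen continuous and linear, and the Malgrange-type division recovering $h$ from its formal data can likewise be organized as a continuous linear operator on the relevant nuclear Fréchet spaces. Composing these gives the desired $\lambda\colon\calC^\infty(V)^G\to\calC^\infty(\R^\ell)$ with $\rho^*\circ\lambda=\id$. Alternatively, once surjectivity of the continuous linear map $\rho^*$ between nuclear Fréchet spaces is known, one may appeal to the splitting theory for such spaces (the $(\mathrm{DN})$--$(\Omega)$ criteria), which guarantees a continuous linear right inverse for spaces of smooth functions. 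I would nonetheless regard the explicit continuous construction as the more robust route, since the same mechanism—formal solvability plus a continuous composite-function/extension theorem adapted to a subanalytic target—is what must be generalized to the invariant Whitney setting pursued in the remainder of the paper.
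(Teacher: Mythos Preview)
The paper does not actually prove this theorem. Theorem~\ref{SchwaSFIACLG} is stated in the introduction as a known result from the literature: surjectivity is attributed to Schwarz \cite[Thm.~1]{SchwaSFIACLG}, the continuous split to Mather \cite{MatDI}, and the reader is referred to \cite{BierstonesBook} for an exposition. It serves as background and as a tool invoked repeatedly later (e.g.\ in Propositions~\ref{flatalongZ} and~\ref{surgrp}), but no proof is supplied. So there is no ``paper's own proof'' to compare your proposal against.

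That said, your sketch is a reasonable outline of how the classical result is established, and in spirit it is close to Schwarz's original argument: formal solvability at each orbit via Hilbert--Weyl plus the slice theorem, followed by a passage from formal to genuine smooth solutions. Two cautions. First, Schwarz's 1975 proof does not literally invoke a Bierstone--Milman composite-function theorem (those came later); the upgrade from formal to smooth is handled by a more hands-on induction on the number of generators together with Malgrange-type division and flat-function arguments. Second, your remark that ``a Whitney extension from $X$ to $\R^\ell$ can be chosen continuous and linear'' is delicate in the $\calC^\infty$ case: a continuous linear extension operator $\calE^\infty(X)\to\calC^\infty(\R^\ell)$ need not exist for arbitrary closed $X$ (cf.\ the remark in the paper citing \cite{BieEWFSS}), so this step cannot be taken for granted and is not the mechanism Mather uses. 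Mather's split is obtained differently, essentially by showing that the kernel of $\rho^*$ is a complemented subspace via an explicit averaging/projection construction rather than by assembling a Whitney extension with a division operator.
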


The surjectivity of $\rho^*$ has been established by Gerald Schwarz \cite[Thm.~1]{SchwaSFIACLG}. The existence of the continuous split has been proven later by 
John Mather in \cite{MatDI}. For an exposition of the material the reader might want to consult the monograph \cite{BierstonesBook}. 

It is natural to regard the orbit space $V/G$ as a topological space with a \emph{smooth structure} $\mathcal C^\infty(V/G):=\mathcal C^\infty(V)^G$. Similarly $X:=\rho(V)$ carries the smooth structure $\mathcal C^\infty(X)=\{f\in \calC(X)\mid \exists F\in \calC^\infty(\mathbb R^\ell):\: F_{|X}=f\}$. As a corollary to Theorem  \ref{SchwaSFIACLG} the Hilbert embedding $\cc\rho$ is a \emph{diffeomorphism} of $V/G$ onto $X$, i.e., a homeomorphism such that 
$\cc\rho^*:\mathcal C^\infty(X)\to \mathcal C^\infty(V/G)$ is an isomorphism of algebras.
 
We refer to Theorem \ref{SchwaSFIACLG} as the Theorem of Schwarz and Mather on differentiable invariants. If we do not refer to the continuous split we speak of the Theorem of Schwarz. 

Our objective is to prove an analogue of the Theorem of Schwarz for  Whitney fields $\calE^\infty(Z)$ along a 
locally closed semialgebraic or subananlytic set $Z\subset V=\mathbb R^n$. Note that we do not assume $Z$ to be 
$G$-stable. In order to speak of invariance of such Whitney fields we make use of the language of groupoids. \\

{\bf Acknowledgements.}
The authors express their gratitude to Edward Bierstone for clarifications 
concerning  Gabrielov regularity. They thank the Instituto de Matem\'atica 
Pura e Aplicada (IMPA) in Rio de Janeiro and the Center for the Quantum
Geometry of Moduli Spaces (QGM) at Aarhus University for hospitality.
The research of MP has been supported by the National Science Foundation 
under DMS-1105670 and by Simons Foundation collaboration grant number 359389.
The research of HCH has received support from QGM which is funded by the Danish 
National Research Foundation. Moreover, he received support from the
Brazilian National Council for Scientific and Technological Development CNPq,
the Austrian Ministry of Science and Research BMWF, Start-Prize Y377, and from
the Eduard {\v C}ech Institute through the grant GA CR P201/12/G028.
HCH also gratefully acknowledges hospitality by the University of Colorado. 
\section{Preliminaries on Whitney functions}
\label{sec:whitney-functions}
For the convenience of the reader and to set up notation we review in
this section the basic concepts of the theory of Whitney functions. 
We also provide the proof of a folklore theorem on the pullback of Whitney functions, 
which we could not find in the literature.
For further information on Whitney functions see \cite{MalIDF}.

\begin{definition}
  Let $X\subset \R^n$ be a locally closed subset, which means that there is
  an open $U\subset \R^n$ such that  $X\subset U$ is relatively  closed. 
  Assume that $m \in \N\cup \{ \infty\}$. Then denote by
  $\sfJ^m (X)$ the space of all \textit{jets of order $m$} 
  (or \textit{$m$-jets}) on $X$ which means the vector space of all families
  \[
    F = \big( F_\alpha \big)_{\alpha \in \N^n, \, |\alpha| \leq m}
    \quad \text{with} \quad F_\alpha \in \calC (X) . 
  \]
  For $K\subset X$ compact and every natural $k \leq m$ we define the 
  seminorm
  $| \cdot |_{K,k}$ on $\sfJ^m (X)$ by
  \[
    | F|_{K,k} := \sup_{x\in K\atop |\alpha| \leq k} | F_\alpha (x) | \: .
  \]
  For $\beta \in \N^n$ with $| \beta | \leq m$ denote by 
  $\partial^\beta : \sfJ^m (X) \rightarrow \sfJ^{m-|\beta|} (X)$ the 
  linear map given by 
  \[
    \big( F_\alpha \big)_{\alpha \in \N^n, \, |\alpha| \leq m}
    \mapsto \big( F_{\alpha+\beta} \big)_{\alpha \in \N^n, \, 
    |\alpha| \leq m-|\beta|} \: .
  \]

  Given two jets $E,F\in \sfJ^m (X)$ one defines their product 
  $ E F\in \sfJ^m (X)$
  as the jet with components 
  \[
     ( EF )_\alpha := \sum_{\beta + \gamma = \alpha \atop \beta , \gamma \in \N^n}
     E_\beta F_\gamma, \quad |\alpha| \leq m.
  \]
 
  Finally, by the symbol $\sfJ^m $ we will denote the map 
  \[
   \sfJ^m=\sfJ^m_X: \calC^m (U) \rightarrow \sfJ^m (X), \:
   f \mapsto \big( (\partial^\alpha f)_{|X} \big)_{\alpha \in \N^n, \, |\alpha| \leq m} . 
  \]
\end{definition}
  The space $\sfJ^m (X)$ together with the topology generated by the family of 
  seminorms $| \cdot |_{K,k}$ as above forms a Fr\'echet space respectively 
  a Banach space in case $X$ is compact and $m$ finite. Moreover, the maps 
  $\partial^\alpha$ and $\sfJ^m$ all become continuous linear maps with respect to 
  this topology. Finally, the product of jets is associative, and $\sfJ^m (X)$
  becomes a Fr\'echet resp.~Banach algebra. 

  For later purposes let us briefly describe at this point the action of vector fields on 
  jets. Let $F \in \sfJ^\infty (X)$, and $\xi $ a smooth vector field defined 
  on an open neighborhood of $X$. Represent the vector field as $\xi = \sum_{i=1}^n \xi_i \partial_i$,
  where the coefficients $\xi_i$ are   uniquely 
  determined smooth functions on the domain of the vector field. Then, one puts
\[
   \xi F := \sum_{i=1}^n \sfJ^\infty (\xi_i) \, \partial_iF \: . 
\]
One checks immediately, that $\xi$ acts as a derivation on $\sfJ^\infty (X)$.

  It is the goal of the following considerations to provide an
  explicit representation of the image of the map 
  $\sfJ^m : \calC^m (U) \rightarrow \sfJ^m (X)$. To this end let us first define for 
  $F \in \sfJ^m (X)$, $a\in X$ and natural $k\leq m$ a function 
  $\sfT_a^k F \in \calC^\infty (\R^n)$ by
  \[
     \sfT_a^k F (x) := \sum_{\alpha \in \N^n \atop | \alpha | \leq k}
     \frac{(x-a)^\alpha}{\alpha !} F_\alpha (a), \quad \text{for $x\in \R^n$}.
  \]
  Then put
  \[
    \widetilde{\sfT}_a^k F := \sfJ^k (\sfT_a^k F) \: \text{ and } \:
    \sfR_a^k F := F -  \widetilde{\sfT}_a^k F .
  \]
  \begin{definition}
    Let $X\subset \R^n$ be a locally closed subset. An element $F \in \sfJ^m (X)$
    with $m\in \N$ is called a \textit{Whitney function of order $m$} 
    on $X$, if for every compact set $K\subset X$ and every $\alpha \in \N^n$ 
    with $| \alpha | \leq m$ the following relation holds true:
    \[
      \left( \sfR^m_x \right)_\alpha (y) = o ( | x-y|^{m-|\alpha|}) \quad 
      \text{for $x,y\in K$ as $|x-y| \rightarrow 0$}. 
    \]
    An element $F \in \sfJ^\infty (X)$
    is called a \textit{Whitney function of order $\infty$}, if for every $m\in \N$
    the $m$-jet $\sfJ^m F$ is a Whitney function of order $m$.
    For every $m\in \N \cup \{ \infty \} $ we will denote by $\calE^m (X) $ 
    the space of Whitney functions of order $m$ on $X$.
  \end{definition}
 
For  $m\in \N \cup \{ \infty \}$,  there is system of seminorms $\|\cdot \|_{K,k}$ on the space  
$\calE^m (X)$ indexed by compact subsets  $K\subset X$ and integers $k\le m$. This system is
defined as follows. For each $k\in\N$ such that $k\le m$ and each compact $K\subset X$ put
\begin{align*}
\| F \|'_{K,k}:=\sup_{x,y\in K, x\ne y\atop \alpha\in \N^n,|\alpha|\le k}{\left|(\sfR^k_x F)_\alpha(y)\right|\over|x-y|^{k-|\alpha|}}
\end{align*} 
and set $\| F \|_{K,k}:= |F |_{K,k}+\| F \|'_{K,k}$. The space of Whitney functions  $\calE^m (X)$ together 
with the system of seminorms $(\| \cdot \|_{K,k})_{K,k}$ forms a  Fr\'echet algebra. In the case when 
$X\subset \R$ is Whitney regular, the system of seminorms 
$(\| \cdot \|_{K,k})_{K,k}$ is equivalent to the system of seminorms $(| \cdot |_{K,k})_{K,k}$. For more 
details we refer the reader to \cite[Prop.~2.6 \& Prop.~3.11]{TouIFD}.

The following fundamental theorem by Whitney 
determines in particular the image of the map $\sfJ^m$. 

\begin{theorem}[Whitney's Extension Theorem \cite{WhiAEDFCS}, 
 cf.~also {\cite[Sec.~I]{MalIDF}}]\label{WhitneyExtension}
 Let $X \subset \R^n$ be locally closed, and $U\subset \R^n$ be open. 
 Assume that $m\in \N$. Then there exists a continuous linear section
 \[
  \sfW^m : \calE^m (X) \rightarrow \calC^m (U) 
 \] 
 of the jet map $\sfJ^m : \calC^m (U) \rightarrow \sfJ^m (X)$, which in other words
 means that $\sfJ^m \circ \sfW^m = \id_{\calE^m (X)}$. In particular this implies
 that $\calE^m (U) \cong \calC^m (U)$.

 Assume now that $m \in \N \cup \{ \infty \}$ and that $Y\subset X$ is closed. 
 Under this assumption denote by $\calJ^m (Y,X)$ the kernel of the restriction 
 map 
 \[
      \operatorname{res}^X_Y : \calE^m (X) \rightarrow \calE^m (Y), \: \big( F_\alpha \big)_{\alpha \in \N^n, \, |\alpha| \leq m}
      \mapsto \big( (F_\alpha)_{|Y} \big)_{\alpha \in \N^n, \, |\alpha| \leq m} \: . 
 \]
 As a consequence, the sequence 
 \[
  0 \xrightarrow{\hspace{1.5em}} \calJ^m(Y,X) 
    \xrightarrow{\hspace{1.5em}} \calE^m (X) 
    \overset{\operatorname{res}^X_Y}{\xrightarrow{\hspace{1.5em}}} 
    \calE^m (Y) \xrightarrow{\hspace{1.5em}} 0
 \]
 is short exact.
\end{theorem}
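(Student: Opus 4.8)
The plan is to derive both assertions from Whitney's classical extension construction, treating the finite-order section first and obtaining the short exact sequence as a consequence.

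First, for the existence of $\sfW^m$ with $m\in\N$, I would fix $F\in\calE^m(X)$, set $\sfW^m F:=F_0$ on $X$, and on the open complement $U\setminus X$ patch together the Taylor polynomials $\sfT^m_a F$ introduced above. Concretely, take a Whitney decomposition of $U\setminus X$ into cubes $Q_\nu$ whose diameters are comparable to $\operatorname{dist}(Q_\nu,X)$, choose a subordinate smooth partition of unity $(\varphi_\nu)_\nu$ with the standard derivative bounds $|\partial^\beta\varphi_\nu|\le C_\beta\operatorname{diam}(Q_\nu)^{-|\beta|}$, and for each $\nu$ pick a nearest point $a_\nu\in X$. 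Then set
\[
   \sfW^m F:=\sum_\nu \varphi_\nu\cdot\sfT^m_{a_\nu}F\quad\text{on }U\setminus X.
\]
This assignment is manifestly linear in $F$, and on $U\setminus X$ it is visibly $\calC^m$ since locally only finitely many terms contribute.

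The hard part will be showing that $\sfW^m F$ extends across $X$ to a genuine $\calC^m(U)$-function with $\sfJ^m(\sfW^m F)=F$, together with continuity of $F\mapsto\sfW^m F$. This is exactly where the Whitney condition enters: the vanishing estimates $(\sfR^m_x F)_\alpha(y)=o(|x-y|^{m-|\alpha|})$ say precisely that the polynomials $\sfT^m_a F$ at nearby base points $a\in X$ agree to high order, so that differentiating the patched sum and comparing with $F_\beta$ at points of $X$ produces the required $o$-estimates for $\partial^\beta\sfW^m F$ near $X$. Carrying this out yields both $\sfJ^m\circ\sfW^m=\id$ and bounds of the form $|\sfW^m F|_{K,k}\le C\,\|F\|_{K',k}$ for suitable compact $K'\subset X$, which is the continuity. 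The isomorphism $\calE^m(U)\cong\calC^m(U)$ then follows, since for open $X=U$ the jet map $\sfJ^m$ is injective (its $0$th component recovers the function) and, by the section just built, surjective onto $\calE^m(U)$, hence a topological isomorphism.

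For the short exact sequence, exactness at $\calJ^m(Y,X)$ and at $\calE^m(X)$ is immediate from the definition of $\calJ^m(Y,X)$ as $\Kern(\operatorname{res}^X_Y)$, so the only content is surjectivity of $\operatorname{res}^X_Y$. Here I would first observe that since $Y$ is closed in $X$ and $X$ is relatively closed in $U$, the set $Y$ is itself relatively closed in $U$; hence the extension operator applies to $Y$ with the \emph{same} open set $U$, giving $g:=\sfW^m_Y G\in\calC^m(U)$ for any $G\in\calE^m(Y)$. Since $\sfJ^m_X(g)$ automatically satisfies the Whitney estimates (Taylor's formula with remainder), $F:=\sfJ^m_X(g)\in\calE^m(X)$, and restricting its jet from $X$ down to $Y$ returns $\sfJ^m_Y(g)=G$, whence $\operatorname{res}^X_Y F=G$; the composite $\sfJ^m_X\circ\sfW^m_Y$ even furnishes a continuous linear section. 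For $m=\infty$ the identical argument applies once the continuous linear $\calC^\infty$-extension operator $\sfW^\infty_Y$ is invoked, which is the Fréchet-space refinement of the construction above and is the one remaining point requiring convergence bookkeeping across all orders.
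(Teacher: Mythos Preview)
The paper does not supply its own proof of this theorem; it is quoted as a classical result with references to Whitney and Malgrange. Your outline of the finite-order case is exactly the classical construction and is fine as a sketch.

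There is, however, a genuine error in your treatment of the case $m=\infty$. You invoke ``the continuous linear $\calC^\infty$-extension operator $\sfW^\infty_Y$'' as if it were simply a refinement of the finite-order construction. In general no such continuous linear section exists: the paper itself remarks (after Theorem~\ref{maindiag}) that $\sfJ^\infty_Z$ admits a continuous split if and only if the interior of $Z$ is dense in $Z$, citing \cite{BieEWFSS}. For a set $Y$ with empty interior---a single point, say---there is no continuous linear $\sfW^\infty_Y$, and your claim that $\sfJ^\infty_X\circ\sfW^\infty_Y$ furnishes a continuous linear section of $\operatorname{res}^X_Y$ is therefore false in general.

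What survives is surjectivity of $\sfJ^\infty_Y:\calC^\infty(U)\to\calE^\infty(Y)$, which is all the short exact sequence actually requires. The standard way to obtain this is \emph{not} to pass to a limit of the $\sfW^m$, but to modify the patching: on the Whitney cube $Q_\nu$ one uses $\sfT^{m_\nu}_{a_\nu}F$ with an order $m_\nu$ that tends to infinity as the cubes approach $Y$. The resulting extension is smooth and restricts to $F$, but the dependence on $F$ through the choice of the $m_\nu$ (or through compensating cutoff factors) destroys linearity or continuity. So your strategy for exactness at $\calE^\infty(Y)$ is salvageable, but you must drop the claim of a continuous linear split and instead argue surjectivity directly via this nonlinear $\calC^\infty$ extension.
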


It is an immediate consequence of Whitney's Extension Theorem that the action of 
$\partial^\alpha$ and more generally of smooth vector fields on the jet space $J^\infty (X)$ leaves the 
subspace $\calE^\infty (X)$ of Whitney functions invariant.
Likewise, the product of two Whitney functions of order $m \in \N^*\cup \{ \infty \}$ 
is again a Whitney function of order $m$.  
\vspace{2mm}

Let us now introduce some more notation used in this paper. As above let 
$Y\subset X \subset U \subset \R^n$ with $U$ being open and $Y$ and $X$ being
relatively closed in $U$. Denote by $\calJ (X,U) \subset \calC^\infty (U)$
or by $\calJ (X)$ if no confusion can arise
the ideal of all smooth functions on $U$ vanishing on $X$, and
let $\calJ^\infty (X,U) \subset \calC^\infty (U)$ be the ideal of 
smooth functions on $U$ which are flat on $X$, i.e.~let 
\[
  \calJ^\infty (X,U) := \{ f \in \calC^\infty (U) \mid (\partial^\alpha f)_{|X} = 0 
  \text{ for all $\alpha \in \N^n$}\}.
\] Then put 
\begin{eqnarray}
\label{Eq:DefSmFct}
  \calC^\infty (X)  & \!\! := \!\! & \calC^\infty (U) / \calJ (X,U) , 
  \quad \text{and} \\
\label{Eq:DefSmFctRel}
  \calC^\infty (X,Y)  &  \!\! := \!\! &
  \{f \in \calC^\infty (X)\mid \text{there exists $\tilde f \in 
  \calJ^\infty (Y,U)$ s.t.~$\tilde f_{|X} = f$} \}. 
\end{eqnarray}
One calls $\calC^\infty (X)$ the \textit{algebra of smooth functions on $X$}
and $\calC^\infty (X,Y)$ the \textit{ideal of smooth functions on $X$ 
flat on $Y$}. Note that neither $\calC^\infty (X)$ nor $\calC^\infty (X,Y)$ 
depend on the particular choice of the ambient $U$.
By Whitney's Extension Theorem one obtains 
\begin{equation}
  \label{Eq:WET}
  \calE^\infty (X) \cong \calC^\infty (U) / \calJ^\infty (X,U), \quad 
  \text{and} \quad
  \calJ^\infty (Y,X) \cong \calJ^\infty (Y,U) / \calJ^\infty (X,U) .
\end{equation}
We finally put
\begin{equation}
  \label{Eq:ISF}
  \begin{split}
  \calE^\infty (Y, X) & := \calC^\infty (X) / \calC^\infty (X,Y) = \\
  & = \calC^\infty (U) / \calJ (X)  \Big/ \calJ^\infty (Y,U) / 
  \calJ^\infty (Y,U) \cap \calJ (X). 
\end{split}
\end{equation}
We will call $\calE^\infty (Y, X)$ the \textit{algebra of Whitney functions 
on $Y$ induced by smooth functions on $X$}. Note that
\begin{equation}
\label{Eq:QR}
\begin{split}
  \calE^\infty (Y,X) =  \calE^\infty (Y ) / 
  \calJ (X)\cdot \calE^\infty (Y) . 
\end{split}
\end{equation}

\begin{remark}
  \begin{enumerate}
  \item  According to its definition above, the symbol $\calC^\infty (X,Y)$ denotes here the same algebra as in 
  \cite{BieMilPawCDF}. This notation differs though from the one used in \cite{BieSchwaCLDEF}.  
  \item  Note that  $\calC^\infty (X,Y)$ coincides with the quotient of $\calJ^\infty (Y,X)$ by the 
     ideal $\calJ^\infty (Y,X) \cap \big\{ F \in \calE^\infty (X) \mid F_0 = 0 \big\}$. Unless $X$ 
     is an open subset of the ambient euclidean space, this ideal is in general non-zero, hence   
     $\calC^\infty (X,Y)$ and  $\calJ^\infty (Y,X)$ do in general not coincide.      
  \end{enumerate}

\end{remark}

Next, we wish to examine the behavior of Whitney functions with respect to the pullback 
 under smooth maps. To this end, let  $\varphi:U \to V$ be a smooth map  between
open subsets $U\subset \mathbb R^n$ and $V\subset \mathbb R^m$. 
Furthermore, assume that $X\subset U$ 
  and $Y\subset V$ be relatively closed subsets such that $\varphi(X)\subset Y$. 
Now, for a jet  $F \in \sfJ^\infty (Y)$  we  define, mimicking the formula of Fa{\`a} di Bruno 
(cf.~Theorem \ref{FdBmulti}), its  pullback  
  $\varphi_{X,Y}^{\sharp} F = (\varphi_{X,Y}^{\sharp} F )_{\alpha \in \N^n} \in \sfJ^\infty (X)$ 
  as follows: 
  \begin{equation}
    \label{Eq:DefPullbackJet}
    \begin{split}
      (& \varphi_{X,Y}^{\sharp}  F )_\beta= \sum_{\lambda \in \Lambda_{n,m}(\beta)} \frac{\beta !}{\lambda !} \quad F_{\sum_{\alpha}\lambda_\alpha} \circ \varphi \quad \prod_{\alpha}\frac{\left(\partial^\alpha \varphi\right)^{\lambda_\alpha}}{(\alpha!)^{\sum_i \lambda_{i,\alpha}}}
    ,
  \end{split}
  \end{equation}
with the notation as in Theorem \ref{FdBmulti}.
  The following result is an immediate consequence of the formula by 
  Fa{\`a} di Bruno. 
\begin{theorem}
\label{Thm:PullbackJet}
  Let $U\subset \R^n$ and $V\subset \R^m$ be open, and assume that $X\subset U$ 
  and $Y\subset V$ are relatively closed. Let $\varphi: U \rightarrow V$ be a 
  smooth map such that $\varphi (X) \subset Y$. 
  Then the pullback map $\varphi^{\sharp}:=\varphi_{X,Y}^{\sharp} : \sfJ^\infty (Y) \rightarrow \sfJ^\infty (X)$,
  $F\mapsto \varphi_{X,Y}^{\sharp} F$ is a continuous linear map. 
  Moreover, it makes the following diagram commute:
  \begin{equation}
  \xymatrix{
      0\ar[r]& \calJ^\infty(Y,V)\ar[r]\ar[d]^{\varphi^*}
      &\calC^\infty(V)\ar[r]^{\sfJ^\infty_Y}\ar[d]^{\varphi^*}&
      \calE^\infty(Y) \ar[r]\ar[d]^{\varphi_{X,Y}^{\sharp}}&0\\
      0 \ar[r]&\calJ^\infty(X,U)\ar[r]& \calC^\infty(U) \ar[r]^{\sfJ^\infty_X} &
      \calE^\infty(X)  \ar[r]& 0 \:,
  }\label{DefPullbackWhi}
\end{equation}
  where $\varphi^* : \calC^\infty(V) \rightarrow \calC^\infty(U)$ is the pull-back
  $f \mapsto f\circ \varphi$.
\end{theorem}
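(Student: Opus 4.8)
The plan is to read off linearity and continuity directly from the defining formula \eqref{Eq:DefPullbackJet}, and then to identify the composite $\varphi^\sharp \circ \sfJ^\infty_Y$ with $\sfJ^\infty_X \circ \varphi^*$ by matching \eqref{Eq:DefPullbackJet} term-by-term against the multivariate Fa\`a di Bruno formula of Theorem \ref{FdBmulti}. Linearity of $F \mapsto \varphi^\sharp F$ is immediate, since each summand in \eqref{Eq:DefPullbackJet} depends linearly on the single component $F_{\sum_\alpha \lambda_\alpha}$. For continuity I would fix a compact $K \subset X$ and an integer $k$, and observe that for $|\beta| \le k$ the component $(\varphi^\sharp F)_\beta$ is a finite sum, indexed by $\lambda \in \Lambda_{n,m}(\beta)$, of products of the smooth factor $\prod_{\alpha}(\partial^\alpha \varphi)^{\lambda_\alpha}/(\alpha!)^{\sum_i \lambda_{i,\alpha}}$ with $F_{\sum_\alpha \lambda_\alpha} \circ \varphi$. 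The first factor is bounded on the compact set $K$ because $\varphi$ is smooth, and since $|\sum_\alpha \lambda_\alpha| \le |\beta| \le k$ while $\varphi(K)$ is compact in $Y$, the second factor is bounded by $|F|_{\varphi(K),k}$. This yields an estimate $|\varphi^\sharp F|_{K,k} \le C_{K,k}\, |F|_{\varphi(K),k}$ with a constant depending only on $K$, $k$ and $\varphi$, which is precisely the asserted continuity.

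The heart of the argument is the commutativity of the middle square. Given $f \in \calC^\infty(V)$, the jet $\sfJ^\infty_Y f$ has components $F_\gamma = (\partial^\gamma f)_{|Y}$. Substituting these into \eqref{Eq:DefPullbackJet} and using $\varphi(X) \subset Y$, the factor $F_{\sum_\alpha \lambda_\alpha} \circ \varphi$ becomes the restriction to $X$ of $(\partial^{\sum_\alpha \lambda_\alpha} f) \circ \varphi$, so that the full expression for $(\varphi^\sharp \sfJ^\infty_Y f)_\beta$ is exactly the restriction to $X$ of the Fa\`a di Bruno expansion of $\partial^\beta(f \circ \varphi)$ furnished by Theorem \ref{FdBmulti}. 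Hence
\[
  (\varphi^\sharp \sfJ^\infty_Y f)_\beta = \big(\partial^\beta(f \circ \varphi)\big)_{|X} = (\sfJ^\infty_X \varphi^* f)_\beta
\]
for every $\beta$, i.e.\ $\varphi^\sharp \circ \sfJ^\infty_Y = \sfJ^\infty_X \circ \varphi^*$.

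It remains to assemble the diagram. The two rows are short exact by Whitney's Extension Theorem \ref{WhitneyExtension}, as recorded in \eqref{Eq:WET}; in particular $\sfJ^\infty_Y$ and $\sfJ^\infty_X$ are surjective onto $\calE^\infty(Y)$ and $\calE^\infty(X)$ with kernels $\calJ^\infty(Y,V)$ and $\calJ^\infty(X,U)$. Combined with the identity just established, this shows at once that $\varphi^\sharp$ carries $\calE^\infty(Y) = \Bild \sfJ^\infty_Y$ into $\calE^\infty(X) = \Bild \sfJ^\infty_X$, so the right-hand vertical arrow is well defined and the middle square commutes. For the left square I would verify that $\varphi^*$ maps $\calJ^\infty(Y,V)$ into $\calJ^\infty(X,U)$: if $f$ is flat on $Y$, then every $(\partial^\gamma f) \circ \varphi$ vanishes on $X$ because $\varphi(X) \subset Y$, whence by Theorem \ref{FdBmulti} each $\partial^\beta(f \circ \varphi)$ vanishes on $X$ and $\varphi^* f$ is flat on $X$. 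Since both horizontal maps of the left square are inclusions, that square commutes trivially.

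As the theorem is advertised as an immediate consequence of Fa\`a di Bruno, I do not expect a genuine obstacle. The only points demanding care are the purely combinatorial check that the summands of \eqref{Eq:DefPullbackJet} coincide with those of the Fa\`a di Bruno expansion under the substitution $F_\gamma = (\partial^\gamma f)_{|Y}$, and the elementary observation that only finitely many multi-indices $\lambda$ contribute to each fixed component $\beta$, which is exactly what makes the continuity estimate finite.
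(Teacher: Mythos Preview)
Your proof is correct and takes a genuinely different route from the paper's on one point. Both of you handle linearity, the sup-norm continuity estimate $|\varphi^\sharp F|_{K,k} \le C_{K,k}\,|F|_{\varphi(K),k}$, and the commutativity of the squares via Fa\`a di Bruno in essentially the same way. Where you diverge is in showing that $\varphi^\sharp$ carries $\calE^\infty(Y)$ into $\calE^\infty(X)$: you deduce this for free from the identity $\varphi^\sharp \circ \sfJ^\infty_Y = \sfJ^\infty_X \circ \varphi^*$ together with the surjectivity of $\sfJ^\infty_Y$ furnished by Whitney extension, whereas the paper works directly with the Whitney remainders, establishing the operator identity $\sfR_x^k \circ \varphi^\sharp = \varphi^\sharp \circ \sfR_{\varphi(x)}^k$ and then bounding $\|\varphi^\sharp F\|'_{K,k}$ against $\|F\|'_{\varphi(K),k}$ via a Lipschitz constant for $\varphi_{|K}$. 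Your argument is shorter and fully suffices for the theorem as stated. What the paper's extra work buys is continuity of the restricted map $\varphi^\sharp : \calE^m(Y) \to \calE^m(X)$ with respect to the Whitney seminorms $\|\cdot\|_{K,k}$, for every $m \in \N \cup \{\infty\}$; since those seminorms are in general strictly finer than the jet seminorms $|\cdot|_{K,k}$, that continuity does not follow from your estimate alone.
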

\begin{proof}
  By definition, $\varphi^{\sharp}$ is linear. To check that  $\varphi^{\sharp}$ is continuous
  let $K \subset X$ be compact and $k\in \N$. Then 
  \begin{align}\label{SupnormEstimate}
 \nonumber  | \varphi^{\sharp}  F|_{K,k}  & \leq \left(\sup_{y\in \varphi (K) \atop |\beta| \leq k} | F_\beta (y) |\right) \!\! 
\quad \left(\sup_{x\in X}   \sum_{\lambda \in \Lambda_{n,m}(\beta)} {\beta !\over \lambda}\quad \prod_{\alpha}\frac{\left|\partial^\alpha \varphi(x)\right|^{\lambda_\alpha}}{(\alpha!)^{\sum_i \lambda_{i,\alpha}}}\right)\\
   & \leq C_{\varphi , K, k }  \: |F|_{\varphi (K) ,k} ,
  \end{align}
  where the constant $C_{\varphi , K, k }>0 $ depends only on $\varphi$, $K$ and 
  $k$.

  In order to prove continuity of $\varphi^{\sharp}$ we need a similar estimate for the remainder term. To this end we observe that for $x\in X$ and $k\in\N$
  \begin{align*}
  \sfR_x^k\circ \varphi^{\sharp}&=(\id-\widetilde \sfT_x^k)\circ \varphi^{\sharp}\\
  &=\varphi^{\sharp}-\sfJ^k\circ T^k_x \circ\varphi^{\sharp}=\varphi^{\sharp}-\sfJ^k\circ \varphi^*\circ T^k_{\varphi(x)}\\
  &=\varphi^{\sharp}-\varphi^{\sharp}\circ\sfJ^k\circ T^k_{\varphi(x)}=\varphi^{\sharp}\circ\sfR_{\varphi(x)}^k.
  \end{align*}

Let us assume for the moment that $\varphi(x)\ne \varphi(x')$. Then we have for $|\alpha|\le k$: 
  \begin{align}\label{Lipschitz}
  {\left(\sfR_x^k(\varphi^{\sharp}F)\right)_\alpha\over |x-x'|^{k-|\alpha|}}&= {|\varphi(x)-\varphi(x')|^{k-|\alpha|}\over |x-x'|^{k-|\alpha|}}{\left(\varphi^{\sharp}(\sfR_{\varphi(x)}^k F)\right)_\alpha\over |\varphi(x)-\varphi(x')|^{k-|\alpha|}}
  \end{align}
  The first term on the right hand side can be estimated by $L^{k-|\alpha|}$, where $L$ is a Lipschitz constant for $\varphi_{|K}$. On the other hand one can prove using an argument similar to \eqref{SupnormEstimate}
  \begin{align*} 
  \left|\left(\varphi^{\sharp}(\sfR_{\varphi(x)}^k F)(x')\right)_\alpha\right|&\le C_{\varphi,\varphi(K),|\alpha|}\sup_{\beta\le \alpha}\left| (\sfR_{\varphi(x)}^k F)_{\beta}(\varphi(x'))\right|\\
  & \le C_{\varphi,\varphi(K),|\alpha|}\sum_{\beta\le \alpha}\left| (\sfR_{\varphi(x)}^k F)_{\beta}(\varphi(x'))\right|
  \end{align*}
  We observe that $|\varphi(x)-\varphi(x')|\le \operatorname {diam}(\varphi(K))<\infty$ and  obtain 
  \begin{align}\label{PullBackIsContinuous}
\nonumber  {\left(\sfR_x^k(\varphi^{\sharp}F)\right)_\alpha\over |x-x'|^{k-|\alpha|}}&
   \le \:\sum_{\beta\le \alpha} L^{k-|\alpha|} C_{\varphi,\varphi(K),|\alpha|}\operatorname {diam}(\varphi(K))^{|\alpha|-|\beta|}\:{\left| (\sfR_{\varphi(x)}^k F)_{\beta}(\varphi(x'))\right|\over |\varphi(x)-\varphi(x')|^{k-|\beta|}}\\
   &\le |\!|F|\!|'_{\varphi(K),k} \sum_{\beta\le \alpha} L^{k-|\alpha|} C_{\varphi,\varphi(K),|\alpha|}\operatorname {diam}(\varphi(K))^{|\alpha|-|\beta|}.
  \end{align}
  
  In order to deal with the case when  $\varphi(x)=\varphi(x')$ we observe that $\sfR_{x}^k (\varphi^{\sharp} F)(x')=\varphi^{\sharp}\sfR_{\varphi(x)}^k F(\varphi(x'))=0$. This implies that the left hand side of \eqref{Lipschitz} vanishes for all $x\ne x'$. This means that \eqref{PullBackIsContinuous} holds in all cases when $x\ne x'$, implying that 
  \begin{align*}
 |\!|\varphi^{\sharp} F|\!|'_{K,k}&\le \widetilde C_{\varphi,\varphi(K),k} |\!|F|\!|'_{\varphi(K),k},\\
 \widetilde C_{\varphi,\varphi(K),k}&= \sup_{\alpha,|\alpha|\le k}\left(\sum_{\beta\le \alpha} L^{k-|\alpha|}\:C_{\varphi,\varphi(K),|\alpha|}\:\operatorname {diam}(\varphi(K))^{|\alpha|-|\beta|}\right).
\end{align*}
This completes the proof that $\varphi^{\sharp}$ maps Whitney functions to Whitney functions and that
$\varphi^{\sharp}:\calE^m(Y)\to\calE^m(X)$ is a continuous map between Fr\'echet spaces for $m\in\N\cup\{\infty\}$. 

It remains to show that the diagram \eqref{DefPullbackWhi} is commutative.
  The formula of Fa\`a di Bruno immediately entails that 
  $\varphi^* f \in \calJ^\infty (X,U)$ for all $f\in \calJ^\infty (Y,V)$. 
  Hence the left square  in the diagram above commutes. To prove that
  the right square commutes as well, one has to show that for an element 
  $f \in \calC^\infty (V)$ the relation
  \begin{equation}
  \label{Eq:CommPol} 
     \varphi^{\sharp} \circ \sfJ^\infty (f)  = \sfJ^\infty ( f\circ \varphi ) 
  \end{equation}
  holds true. But this is clear by the formula of Fa\`a di Bruno 
  and Eq.~\eqref{Eq:DefPullbackJet}.
  Hence the claim holds true. 
\end{proof}
\begin{remark}
Note that with the notation of Theorem \ref{WhitneyExtension} $\operatorname{res}^X_Y=\id^{\sharp}_{Y,X}$ where $\id:U\to U$ is the identity map.
\end{remark}


%
%
\section{The orbit type stratification and saturations}
\label{orbittypestratification}
In this paper, we follow Mather's concept of a stratification, 
cf.~\cite{MatSM,PflAGSSS}, which essentially is a local one and allows for a 
clear construction of the orbit type stratification 
of a $G$-manifold $M$.

\begin{definition}
By a \emph{decomposition} of a paracompact topological space $X$ with countable topology we understand a locally finite 
partition $\calZ$ of $X$ into locally closed subspaces $S$, 
called \emph{pieces}, such that each piece $S\in \calZ$ carries
the structure of a smooth manifold and such that the following \emph{condition of frontier} is satisfied:
\begin{enumerate}
\item[(CF)] 
  If $R,S \in \calZ$ are two pieces such that $R \cap \overline{S} \neq \emptyset$, then $R \subset \overline{S}$. 
  In this case we say that $R$ is \emph{incident} to $S$. 
\end{enumerate}
If $\calZ$ and $\calZ'$ are two decompositions of $X$, we say that 
$\calZ$ is \emph{coarser} than $\calZ'$, if every piece of $\calZ'$ is
contained in a piece of  $\calZ$. 

A \emph{stratification}  of $X$ (in the sense of Mather)
is a map $\calS$ which associates to each 
point $x\in X$ the set germ $\calS_x$ of a locally closed subset of $X$ 
such that the following axiom holds true:
\begin{enumerate}
\item[(ST)] For each $x \in U$ there exists an open neighborhood $U$ and a 
  decomposition $\calZ_U$ of $U$ such that for each $y\in U$ the set germ 
  $\calS_y$ coincides with the set germ of the unique piece $R_y \in \calZ_U$ 
  which contains $y$ as an element. 
\end{enumerate}
\end{definition}

A decomposition of $X$ obviously induces a stratification, namely the one 
which associates to each point the piece in which that point lies. 
The crucial observation from \cite[Prop.~1.2.7]{PflAGSSS} now is that a 
stratification $\calS$ of $X$ in the sense of Mather is always induced by a 
global decomposition of the underlying space, and that among those 
decompositions there is a coarsest one. We usually denote that 
decomposition by the same symbol $\calS$ as for the stratification, and call 
the pieces of that decomposition the \emph{strata} of the stratification. 

\begin{example}
  Let $M$ be an analytic manifold, and $Z \subset M$ a subanalytic subset. 
  Then $Z$ possesses a minimal stratification fulfilling Whitney's condition B. 
  See \cite{BieMilSSS,BekRSSS} for subanalytic sets and their stratifications, and \cite{PflAGSSS} for 
  details on the Whitney conditions.  
\end{example}

Now assume that $G$ is a compact Lie group acting on a smooth manifold $M$. 
Denote for every point $x\in M$ by $G_x$ the isotropy group of $x$. 
For every closed subgroup $H \subset G$ the set 
\[
 M_{(H)} := \{ x \in M \mid G_x \text{ is conjugate to } H \}
\]
then is a smooth submanifold of $M$, possible with varying dimensions of its 
connected components. Moreoever, the map $\calS$ which associates
to each $x\in M$ the set germ of the submanifold 
$M_{(G_x)}$ at $x$ is a stratification of $M$, which we call the 
\emph{orbit type stratification} of $M$. Since each of the sets $M_{(H)}$ is 
$G$-invariant, the orbit type stratification descends
to a stratification of the orbit space $X := M/G$. This stratification is
also called \emph{orbit type stratification}. See \cite[Sec.~4.3]{PflAGSSS} 
for details. 

\begin{example} Let $n\ge 2$ and consider the orthogonal group $G=\operatorname{O}_n$ acting $V=T^*\R^n=\R^n\times \R^n=\{(q, p)\mid q, p\in \R^n\}$ as the cotangent lift of the defining representation (i.e.~$G$ acts on $\R^n\times \R^n$ diagonally). There are three orbit type strata:
\begin{enumerate}
\item the stratum $V_{(G)}=\{0\}$ of points with isotropy group equal $G$, 
\item the stratum $V_{(\operatorname{O}_{n-1})}=\{(q,p)\ne (0,0) \mid q\:||\: p\}$ of points with isotropy conjugate to $\operatorname{O}_{n-1}$, and
\item the stratum $V_{(e)}=\{(q,p) \mid q \nparallel p\}$ of points with trivial isotropy.
\end{enumerate}
\end{example}

In the following we extend the definition of orbit type stratification to 
singular subsets of a $G$-manifold $M$. The action of a group element $g\in G$ on $x\in M$ will be denoted by
$g.x$. 

\begin{definition}
  Let $G$ be a compact Lie group acting smoothly on a manifold $M$. We will say that a relatively closed subset $Z$ 
  of some open $G$-invariant subset $U \subset M$ is \emph{stratified by orbit type}, if the following conditions 
  hold true:
  \begin{enumerate}[(OT1)]
  \item \label{ite:OT1} 
        If $S$ is a stratum of the stratification of $U$ by orbit type, then 
        the intersection $Z\cap S$ is an analytic submanifold of $U$, 
        possibly with  connected components having varying dimensions. 
  \item \label{ite:OT3} 
        If $x,y \in Z$ are points for which there is a group element $g\in G$ 
        with $g.x =y$,  then 
        \[
          g.\big( T_x (Z\cap S) \big) = T_y(Z\cap S) ,
        \] 
        where $S$ is the orbit type stratum in $U$ which contains $x$ and $y$.
  \end{enumerate}
\end{definition}

\begin{lemma}
Let $M$ be a $G$-manifold and $Z \subset M$ relatively closed  
in some $G$-invariant open subset $U\subset M$ be stratified by orbit type. 
Then the following holds true:
\begin{enumerate}[{\rm (OT1)}]
\setcounter{enumi}{2}
 \item \label{ite:OT2} 
        If $S$ is a stratum of the stratification of $U$ by orbit type, and $x \in Z\cap S$,
        then there is an open neighborhood $O\subset M$ of $x$ such that for all $y \in O\cap Z\cap S$
        \[
           \dim \big( T_y \calO_y \cap  T_y(Z\cap S) \big) = \dim \big( T_x \calO_x \cap  T_x(Z\cap S) \big) \ .
        \] 
        Hereby, $\calO_y$ denotes the orbit through $y$. 
\end{enumerate}
\end{lemma}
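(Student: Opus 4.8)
The plan is to rephrase the claim in terms of fundamental vector fields and then to split the problem into an \emph{orbit direction}, controlled by (OT3), and a \emph{transverse direction}, controlled by a constant rank argument. Write $\frg$ for the Lie algebra of $G$ and, for $\xi\in\frg$, let $\xi_M$ be the induced fundamental vector field on $M$, so that $T_y\calO_y$ is the image of the linear map $\frg\to T_yM$, $\xi\mapsto\xi_M(y)$, whose kernel is the isotropy Lie algebra $\frg_y$. Setting
\[
 \mathfrak{h}_y:=\{\xi\in\frg\mid \xi_M(y)\in T_y(Z\cap S)\}\supseteq\frg_y,
\]
one checks that $\xi\mapsto\xi_M(y)$ carries $\mathfrak{h}_y$ onto $T_y\calO_y\cap T_y(Z\cap S)$ with kernel $\frg_y$, so that for the quantity of interest
\[
 \delta(y):=\dim\big(T_y\calO_y\cap T_y(Z\cap S)\big)=\dim\mathfrak{h}_y-\dim\frg_y .
\]
Since $y\mapsto\dim\frg_y=\dim G_y$ is constant along the orbit type stratum $S$, it suffices to show that $y\mapsto\dim\mathfrak{h}_y$ is locally constant on $Z\cap S$. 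Equivalently, writing $\pi\colon S\to S/G$ for the orbit map, which is a submersion on the stratum with $\Kern d\pi_y=T_y\calO_y$, we have $\delta(y)=\dim\Kern\big(d\pi_y|_{T_y(Z\cap S)}\big)$, so the assertion is that $\pi|_{Z\cap S}$ has locally constant rank at $x$.

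First I would record upper semicontinuity. The distribution $y\mapsto T_y\calO_y$ is smooth of constant rank on $S$, and $y\mapsto T_y(Z\cap S)$ is smooth because $Z\cap S$ is a submanifold by (OT1); hence $\delta$ is upper semicontinuous in $y$. As its values are integers, there is an open neighbourhood $O$ of $x$ with $\delta(y)\le\delta(x)$ for all $y\in O\cap Z\cap S$. It therefore remains to prove the reverse inequality $\delta(y)\ge\delta(x)$.

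Next I would exploit (OT3). Writing $\Phi_a$ for the action of $a\in G$, the fundamental vector fields satisfy $(\Phi_a)_*\big(\xi_M(y)\big)=(\operatorname{Ad}_a\xi)_M(a.y)$. Combined with $T_{a.y}(Z\cap S)=(\Phi_a)_*\,T_y(Z\cap S)$ from (OT3), this yields $\mathfrak{h}_{a.y}=\operatorname{Ad}_a\mathfrak{h}_y$ whenever $y$ and $a.y$ both lie in $Z\cap S$; in particular $\delta$ is constant along each orbit intersection $Z\cap\calO_y$. Using the slice theorem (here $G$ is compact) I would pass to the local model in which $S\cong\calO_x\times B$ with $G$ acting only on the first factor, $\pi$ the projection to a chart $B$ of $S/G$, and the orbits appearing as the slices $\calO_x\times\{b\}$. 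Then $\delta$ descends to a function on $\pi(Z\cap S)\subset B$ that is constant on every fibre $Z\cap\calO$.

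The main obstacle is precisely the reverse inequality, that is, ruling out that $\pi|_{Z\cap S}$ drops rank at $x$, equivalently that the fibres $Z\cap\calO$ jump in dimension there. This is the step where (OT3) is indispensable, and where constancy of $\delta$ along a \emph{single} orbit intersection does not suffice: two submanifolds along whose intersection the dimension of the intersection of tangent spaces is constant need not meet cleanly. What I would prove is that (OT3) forces the intersections $Z\cap\calO$ to be clean, so that $T_y(Z\cap\calO)=T_y(Z\cap S)\cap T_y\calO_y$ and $\dim(Z\cap\calO)=\delta(y)$ with $\delta$ locally constant in the transverse variable. The geometric content is that a tangency of $Z\cap S$ to an orbit along a locus larger than $Z\cap\calO$ would, over nearby orbits meeting $Z$ in several sheets, produce tangent spaces incompatible with the rigid rotation $\mathfrak{h}_{a.y}=\operatorname{Ad}_a\mathfrak{h}_y$ dictated by (OT3); already the one dimensional picture of a curve tangent to a single circle exhibits this conflict. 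To turn this into a proof I would use the analyticity of $Z\cap S$ from (OT1), which confines the rank drop of $\pi|_{Z\cap S}$ to a proper analytic subset, together with the compactness of the orbits and the equivariance above to show that $x$ cannot lie on that exceptional subset.
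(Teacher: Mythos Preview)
The paper's own argument is a single sentence: if $y=g.x$ with $x,y\in Z$, then the equivariance hypothesis $g.\big(T_x(Z\cap S)\big)=T_y(Z\cap S)$ together with the automatic relation $g.(T_x\calO_x)=T_y\calO_y$ shows that the linear isomorphism $g$ carries one intersection onto the other. This is precisely the orbit-wise constancy you derive; the paper does not address the transverse direction at all. You are therefore right to isolate that direction as the ``main obstacle'', and your reformulation via $\mathfrak{h}_y$ and the rank of $\pi|_{Z\cap S}$ is the natural setting for it.

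The step you leave open, however, cannot be completed: the lemma as stated is false. Take $G=S^1$ acting by rotations on $V=\R^2$, so $S=V\setminus\{0\}$, and let $Z$ be the analytic arc $\gamma(\theta)=\big((1+\theta^3)\cos\theta,\,(1+\theta^3)\sin\theta\big)$ for $\theta$ in a small interval about $0$. Since $\theta\mapsto1+\theta^3$ is strictly monotone, each orbit meets $Z$ in at most one point, so the equivariance hypothesis is vacuous; and $Z=Z\cap S$ is a $1$-dimensional analytic submanifold, so (OT1) holds. In polar form $\gamma'(\theta)$ has radial component $3\theta^2$ and angular component $1+\theta^3$, so $T_{\gamma(\theta)}Z\subset T_{\gamma(\theta)}\calO_{\gamma(\theta)}$ exactly when $\theta=0$: thus $\delta=1$ at $x=(1,0)$ and $\delta=0$ at every other point of $Z$. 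Your heuristic invoking ``nearby orbits meeting $Z$ in several sheets'' does not bite here, and analyticity alone does not prevent $x$ from lying on the rank-drop locus of $\pi|_{Z\cap S}$. In short, the paper's one-line proof delivers only constancy along $\calO_x\cap Z$, and the stronger neighbourhood statement needs an additional assumption (for instance, that $Z\cap S$ meets the orbit foliation cleanly) which the stated hypotheses do not supply.
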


\begin{proof}
  The claim follows from (OT\ref{ite:OT3}) and the fact that 
  $g.(T_x \calO_x ) = T_y \calO_y$.
\end{proof}

\begin{proposition}
   Let $M$ be a $G$-manifold, $U\subset M$ be open and $G$-invariant, and assume that $Z \subset U$ is a 
   relatively closed subset which is stratified by orbit type. 
   Then $G.Z$ is stratified by orbit type as well.
   Moreover, assigning to each $x\in Z$ (resp.~$x\in G.Z$) the set germ $[Z\cap S]_x$ 
   (resp.~ $[(G.Z)\cap S]_x$), where $S$ is the orbit type stratum of $U$ containing $x$,
   provides a stratification of $Z$ (resp.~$G.Z$). These stratifications are $G$-invariant which means that
   $g [Z\cap S]_x  = [Z\cap S]_{gx}$ for all $x \in Z\cap S$ and $g\in G$ with $gx \in  Z\cap S$, and  
   $g [(G.Z)\cap S]_y  = [(G.Z)\cap S]_{gy}$ for all $y \in G.Z\cap S$ and $g\in G$.
\end{proposition}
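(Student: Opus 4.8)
The plan is to establish the three assertions of the proposition in turn: first that $G.Z$ satisfies the two axioms (OT1) and (OT3) of being stratified by orbit type, second that the set-germ assignments indeed define stratifications of $Z$ and $G.Z$ in the sense of Mather, and third that these stratifications are $G$-invariant. Throughout I would exploit that each orbit type stratum $S$ of $U$ is itself $G$-invariant, so that the $G$-action permutes the slices $Z\cap S$ only within a fixed stratum $S$, and that $G$ acts by analytic diffeomorphisms on $M$ preserving the smooth/analytic structures in sight.

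First I would verify (OT1) for $G.Z$. Since $S$ is $G$-invariant, we have $(G.Z)\cap S = G.(Z\cap S) = \bigcup_{g\in G} g.(Z\cap S)$. For a fixed $g$, the translate $g.(Z\cap S)$ is an analytic submanifold of $S$ because $Z\cap S$ is one by hypothesis and $g$ acts analytically. The issue is that the union over all $g\in G$ need not obviously be a submanifold, so here I would use compactness of $G$ together with condition (OT3) and its consequence (OT2): condition (OT3) says the tangent spaces $T_x(Z\cap S)$ transform equivariantly, which forces the orbit $G.x$ of a point $x\in Z\cap S$ to meet $Z\cap S$ in a way that is compatible with the manifold structure. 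Concretely I would argue that $G.(Z\cap S)$ is, locally near any point, a finite union of translates that fit together smoothly; the equivariance (OT3) guarantees that where two translates $g.(Z\cap S)$ and $h.(Z\cap S)$ overlap their tangent spaces agree, so no crossing or corner can occur and the union is an analytic submanifold. I expect this to be the main obstacle: showing that the orbit-saturation of an analytic submanifold within $S$ is again an analytic submanifold, and the equivariance condition (OT3) is precisely what is engineered to make this work.

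Next, (OT3) for $G.Z$ follows fairly directly. If $x',y'\in G.Z$ lie in the same stratum $S$ with $h.x'=y'$, I would write $x' = g.x$ with $x\in Z\cap S$, reduce the statement about $T_{x'}\big((G.Z)\cap S\big)$ to the hypothesis (OT3) for $Z$ by transporting via the analytic diffeomorphisms given by group elements, and use $g.\big(T_x\calO_x\big)=T_{g.x}\calO_{g.x}$ together with $(G.Z)\cap S = G.(Z\cap S)$ to conclude $h.\big(T_{x'}((G.Z)\cap S)\big)=T_{y'}((G.Z)\cap S)$. This is essentially bookkeeping with the equivariance already available.

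Finally, to see that the germ assignments give genuine Mather stratifications I would invoke axiom (ST): locally around any $x\in Z$ (resp.\ $x\in G.Z$) the orbit type stratification of $U$ is induced by a decomposition $\calZ_U$, and intersecting each piece $S\in\calZ_U$ with $Z$ (resp.\ with $G.Z$) yields a locally finite partition into locally closed sets which by (OT1) are smooth manifolds; the condition of frontier (CF) for these intersected pieces follows from (CF) for $\calZ_U$ together with the relative closedness of $Z$ in $U$. This exhibits the required local decomposition, so the germ map is a stratification. The $G$-invariance $g[Z\cap S]_x = [Z\cap S]_{gx}$ is then immediate from $G$-invariance of $S$ and the fact that $g$ maps the germ of $Z\cap S$ at $x$ to the germ of $g.(Z\cap S)=Z\cap S\cap(\text{nbhd})$—more precisely, since $S$ is $G$-stable and $g$ acts by homeomorphisms, $g$ carries the local piece of $Z\cap S$ at $x$ onto the local piece at $gx$ whenever $gx\in Z\cap S$; the corresponding statement for $G.Z$ holds without the proviso $gy\in G.Z\cap S$ because $G.Z$ is itself $G$-stable. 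I would close by remarking that no new analytic input is needed here beyond what (OT1)–(OT3) already provide.
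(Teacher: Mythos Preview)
Your overall plan is sound, and you correctly identify the verification of (OT1) for $G.Z$ as the crux. However, the step in which you conclude that $G.(Z\cap S)$ is an analytic submanifold has a genuine gap. From (OT3) you can indeed deduce that whenever two translates $g.(Z\cap S)$ and $h.(Z\cap S)$ meet at a point their tangent spaces there coincide; but the inference ``tangent spaces agree, so no crossing or corner can occur and the union is a submanifold'' is not valid in general. Think of the curves $y=0$ and $y=x^3$ in $\R^2$: they are tangent at the origin, yet their union is not a $1$-dimensional submanifold there. You would need to show something stronger---e.g.\ that near any point the translates actually \emph{coincide} as germs, or to produce explicit charts---and your sketch does not supply this. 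A related slip is ``locally a finite union of translates'': for a positive-dimensional compact $G$ the family $\{g.(Z\cap S)\}_{g\in G}$ is a continuum, and compactness alone does not reduce the local picture to finitely many leaves.

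The paper circumvents this by a direct chart construction rather than by analyzing overlaps. After fixing a $G$-invariant Riemannian metric, for $y\in(G.Z)\cap S$ one picks $g\in G$ with $x=g.y\in Z\cap S$, decomposes $\operatorname{Lie}(G)=\mathfrak h\oplus\mathfrak m$ with $\mathfrak h=\{\xi:\xi_V(x)\in T_x(Z\cap S)\}$, and shows that
\[
\psi:O_1\times O_2\subset\mathfrak m\times T_x(Z\cap S)\longrightarrow M,\qquad(\xi,X)\longmapsto(\exp\xi).\exp(X)
\]
is an embedding whose image, transported by $g^{-1}$, gives a smooth chart of $(G.Z)\cap S$ around $y$; condition (OT3) then ensures that charts arising from different choices of $g$ are $\calC^\infty$-compatible. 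This exhibits $(G.Z)\cap S$ as a manifold directly, which is precisely the missing ingredient in your argument. Your treatment of (OT3) for $G.Z$, of the Mather stratification axiom (ST), and of $G$-invariance is otherwise along the same lines as the paper's.
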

\begin{proof}
   We first show that for every relatively closed $Z \subset U$ which is stratified by orbit type the assignment
   $Z \ni x \to \calS_x := [Z\cap S]_x$, where $S$ is the orbit type stratum of $U$ containing $x$, is 
   a stratification of $Z$. Let $x \in Z$. Choose an open neighborhood $O$ of $x$ according to   (OT\ref{ite:OT2}).
   Then, by (OT\ref{ite:OT1}), $O \cap Z\cap S$ is a smooth manifold of some fixed dimension, and 
   $[Z\cap S]_x =  [Z\cap S \cap O]_x$. Locally, the assignment $\calS$ comes from a decomposition of $Z$, since
   the ambient manifold is decomposed by orbit type strata. Moreover,  since the decomposition  of the ambient space 
   by orbit type strata is locally finite, the local decomposition of $\calZ$ inducing $\calS$ has to be locally finite, 
   too. Therefore, the assignment  $Z \ni x \to \calS_x $ is a stratification of $Z$, indeed. 
   (OT\ref{ite:OT3}) entails that the stratification $\calS$ is $G$-invariant.
 
   It remains to show that $G.Z$ is stratified by orbit types. To this end choose a 
   $G$-invariant riemannian metric on $M$ and note that $G.Z$ is a relatively closed 
   subset of $U$. Next consider an orbit type stratum $S$ of $M$, let $y\in G.Z \cap S$ 
   and choose $g\in G$ with $x = gy \in Z$. By $G$-invariance of $S$ we have 
   $x \in Z \cap S$. 
   Choose an open connected neighborhood $O$ of $x$ such that (OT\ref{ite:OT2}) holds true. Let 
   $\mathfrak{h} \subset \operatorname{Lie} (G)$ be the sub Lie algebra consisting of all 
   $\xi \in \operatorname{Lie} (G)$ such that $\xi_V (x) \in T_x (Z\cap S)$, where $\xi_V$ 
   denotes the fundamental  vector field of $\xi$. Let $\mathfrak{m}$ be a complement of $\mathfrak{h}$
   in $\operatorname{Lie} (G)$. By (OT\ref{ite:OT2}),  we can find, after possibly shrinking $O$,  
   connected open neighborhoods $O_1 \subset \mathfrak{m}$ and $O_2 \subset T_x(Z\cap S)$ of the origin 
   such that the function
   \[
     \psi : O_1 \times O_2 \to M, \quad (\xi,X) \mapsto (\exp\xi) . \exp(X)
   \] 
   is well-defined, is an open embedding, and has image $O$.  Put $O' := g^{-1}.O$. The map 
   $\chi : O' \to O_1 \times O_2 \subset \mathfrak{m} \times T_x(Z\cap S)$, $z \mapsto \psi^{-1} ( g . z)$ 
   then is a smooth chart of $(G.Z) \cap S$ around $y$. 
   Property  (OT\ref{ite:OT3}) entails that if we choose a different $g \in G$ with 
   $g.x \in Z$, we obtain another smooth chart of $(G.Z) \cap S$ around $y$ which is 
   $\calC^\infty$-equivalent to the first. This proves that $G.Z$ satisfies (OT\ref{ite:OT1}). 
   Finally, $(G.Z) \cap S$ fulfills (OT\ref{ite:OT3}), since 
   both $G.Z$ and $S$ are $G$-invariant.  
\end{proof}


%
%
\section{Invariant Whitney functions}
\label{invwhitney}

We assume that $G$ is a compact Lie group acting orthogonally on $V=\R^n$.
Our goal is to prove Theorem \ref{maindiag}, an analogue of the Theorem of Schwarz for 
Whitney functions $\calE^\infty(Z)$ along a locally closed subanalytic set $Z\subset V$, which is stratified by orbit type. 
Note that we do not assume $Z$ to be $G$-stable. In order to speak of invariance of such Whitney functions we make use of 
the language of groupoids. 

\begin{definition} 
Let $Z\subset V$ be an arbitrary subset. The \emph{restricted action groupoid} $\Gamma_Z:=(G\ltimes V)_{|Z}$ is defined as follows. The set of objects $(\Gamma_Z)_0$ is defined to be $Z$ while the set 
of arrows is  $(\Gamma_Z)_1:=\{(g,z)\in G\times Z\mid g.z\in Z\}$. The composition of arrows, 
unit and inverse are defined in the obvious way. That is, the composition rule is 
$(g,z)(h,z')=(gh,z')$ in case of $z=h.z'$, the unit map sends $z$ to $(e,z)$, while the 
inverse of $(g,z)$ is $(g^{-1},g.z)$.
\end{definition}

For $g\in G$ let us write $\Phi_g$ for the (linear) diffeomorphism $V\to V$, $v\mapsto g.v$. 
We have a corresponding formal pullback (cf. Eq. (\ref{Eq:DefPullbackJet}))
\begin{eqnarray*}
(\Phi_g)^{\sharp}_{\{v\},\{g.v\}}:\sfJ^\infty(\{g.v\})\to\sfJ^\infty(\{v\}).
\end{eqnarray*}
Given a locally closed subset $Z\subset V$ we say that a jet $F=(F_{\alpha})_{\alpha\in\mathbb N^n}\in \sfJ^\infty(Z)$ is 
\emph{invariant} if the following conditions hold true:
\begin{enumerate}[{(Inv}1)]
\item \label{Ite:Action}
  The canonical action of $(G\ltimes V)_{|Z}$ on the jet bundle $\sfJ^\infty(Z)$
  leaves $F$ invariant, which means that
  for all $(g,z)\in(G\ltimes V)_{|Z}$ and  $\alpha\in \mathbb N^n$ one has 
  \begin{eqnarray*}
     \left((\Phi_g)^{\sharp}_{\{z\},\{g.z\}}\big(F (g.z) \big)\right)_\alpha=F_{\alpha}(z)
  \end{eqnarray*}
\item\label{Ite:Constant}
     The natural action of the Lie algebra $\frg$ of $G$ on the jet bundle $\sfJ^\infty(Z)$
     leaves $F$ invariant, which means that for every element $\xi \in \frg$ one has 
     \[
       \xi_V  F =0,
     \]
    where $\xi_V$ denotes the fundamental vector field of $\xi$ on $V$.
\end{enumerate}

The space of jets on $Z$ satisfying invariance condition (Inv\ref{Ite:Action}) will be  
denoted by $\sfJ^\infty(Z)^{(G\ltimes V)_{|Z}}$, the space of jets satisfying invariance condition 
(Inv\ref{Ite:Constant}) by $\sfJ^\infty(Z)^\frg$. By $\sfJ^\infty(Z)^\textup{inv}$, we denote the 
space of invariant jets, i.e.~the space
\[
  \sfJ^\infty(Z)^\textup{inv} = \sfJ^\infty(Z)^{(G\ltimes V)_{|Z}} \cap \sfJ^\infty(Z)^\frg \: .
\]
For Whitney functions, we put
\[
\begin{split}
 & \calE^\infty(Z)^{(G\ltimes V)_{|Z}}:=\sfJ^\infty(Z)^{(G\ltimes V)_{|Z}}\cap \calE^\infty(Z), \\
 & \calE^\infty(Z)^\frg:=\sfJ^\infty(Z)^\frg\cap \calE^\infty(Z), \: \text{ and } \\
 & \calE^\infty(Z)^\textup{inv}:=\sfJ^\infty(Z)^\textup{inv}\cap \calE^\infty(Z).
\end{split}
\]
We call $\calE^\infty(Z)^\textup{inv}$ the space of \emph{invariant Whitney functions}.
Finally, if $M$ denotes a $G$-manifold, and $Z\subset M$ a closed subset,
we write $\calJ^\infty(Z,M)^G$ for the space $\calJ^\infty(Z,M) \cap \calC^\infty (M)^G$
and call it the space of \emph{invariant smooth functions on $M$ flat on $Z$}.
\begin{remark}
 In case $Z\subset V$ is locally closed and $G$-stable, the restricted groupoid 
 $(G\ltimes V)_{|Z}$ coincides with the action groupoid $G\ltimes Z$. For convenience, 
 we therefore write $\calE^\infty(Z)^G$  instead of $\calE^\infty(Z)^{(G\ltimes V)_{|Z}}$ 
 in this situation. Observe  that for Whitney functions over a $G$-stable 
 $Z$, condition (Inv\ref{Ite:Constant}) follows from (Inv\ref{Ite:Action}).  In other words 
 this means that $\calE^\infty(Z)^G = \calE^\infty(Z)^\textup{inv}$, if $G.Z =Z$.
 Note that this property is essentially  a consequence  of the Theorem of Schwarz and Mather.
 See Prop.~\ref{surgrp} for details. 
\end{remark}
\begin{example} To motivate that in case of non-$G$-stable $Z$ it is necessary to impose $\mathfrak g$-invariance in addition to the $(G\ltimes V)_{|Z}$-invariance let us consider the following example. We let the circle $G:=\operatorname S^1$ operate on the plane $V:=\R^2=\{(x,y)\mid x,y\in \R\}$ by rotations and put $Z:=\{(0,1)\}$. Since $Z$ consists of a point we have $\calE^\infty(Z)=\sfJ^\infty(Z)$. The restricted action groupoid is trivial: $(G\ltimes V)_{|Z}=\{(e,0,1)\}$, where $e\in \operatorname S^1$ is the identity. Hence 
 $\sfJ^\infty(Z)^{(G\ltimes V)_{|Z}}=\sfJ^\infty(Z)$, i.e., the condition of invariance with respect to the action groupoid is void in this example. On the other hand, the fundamental vector field of the circle action evaluated at the point $(0,1)$ is $-\partial/\partial x=-\partial_{(1,0)}$ which entails that $\sfJ^\infty(Z)^\mathfrak g=\{F=(F_\alpha)_{\alpha\in \N^2}\in \sfJ^\infty(Z)\mid \alpha \notin \{0\}\times\N \Rightarrow F_\alpha=0\}$. 
\end{example}

\begin{theorem} \label{maindiag} Let $G\to \operatorname{O}(V)$ be a finite dimensional 
orthongonal representation of the compact Lie group $G$.  Let 
$\rho_1,\dots,\rho_\ell\in\mathbb R[V]^G$  be a minimal complete system of polynomial 
invariants and $\rho:=(\rho_1,\dots,\rho_\ell): V\to \mathbb R^\ell$ the corresponding 
Hilbert map. Assume to be given a subanalytic subset $Z\subset V$ which is  stratified by orbit type 
and closed in some open $G$-invariant open neighborhood $U \subset V$. 
Let us write $\calJ_X:=\calJ(X,W)$ for the ideal of smooth functions on $W$ vanishing on $X:= \rho (U)$, where
$W \subset \R^\ell$ open can and has been chosen so that $X$ is closed in $W$.  
Then in the commutative diagram
\begin{eqnarray*}
\xymatrix{&0&0&0&\\
0\ar[r]&\calJ ^\infty(Z,U)^G\ar[u]\ar[r]&\calC^\infty(U)^G\ar[u]\ar[r]_{\sfJ^\infty_{Z}\quad\quad}&\calE^\infty(Z)^\textup{inv}\ar[u]\ar[r]&0\\
0\ar[r]&\calJ ^\infty(\rho(Z), W)\ar[u]^{\rho^*_{|U}}\ar[r]&\calC^\infty(W)\ar[u]^{\rho^*_{|U}}\ar[r]_{\sfJ^\infty_{\rho(Z)}\quad}&\calE^\infty(\rho(Z))\ar[u]_{\rho^{\sharp}_{Z,\rho(Z)}}\ar[r]&0\\
0\ar[r]&\calJ ^\infty(\rho(Z), W)\cap \calJ_X \ar[u]\ar[r]& \calJ_X \ar[u]\ar[r]&\sfJ^\infty_{\rho(Z)}(\calJ_X)\ar[u]\ar[r]&0\\
&0\ar[u]&0\ar[u]&0\ar[u]&
}
\end{eqnarray*}
all rows and columns are exact sequences of linear continuous maps of Fr{\'e}chet spaces.
\end{theorem}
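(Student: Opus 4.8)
The plan is to reduce the whole statement to a short piece of homological algebra resting on three input facts of increasing difficulty, and to isolate the genuinely hard ingredient as the assertion that invariant Whitney functions along $Z$ descend through the Hilbert map. First I would fix the formal setting. All nine spaces are Fr\'echet: the invariance conditions (Inv\ref{Ite:Action}) and (Inv\ref{Ite:Constant}) and the flatness conditions each cut out a closed subspace of the relevant jet space, being intersections of kernels of continuous linear maps, and the ideals $\calJ_X$ and $\calJ^\infty(\rho(Z),W)$ are closed in $\calC^\infty(W)$. The jet maps $\sfJ^\infty$, the pullbacks $\rho^*_{|U}$ and the Whitney pullback $\rho^{\sharp}_{Z,\rho(Z)}$ are continuous by the Preliminaries and by Theorem \ref{Thm:PullbackJet}; since a continuous linear bijection of Fr\'echet spaces is an isomorphism by the open mapping theorem, I will not separately check continuity of inverses. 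The only commutativity that is not evident, that of the right-hand squares, is the intertwining relation $\rho^{\sharp}_{Z,\rho(Z)}\circ\sfJ^\infty_{\rho(Z)}=\sfJ^\infty_Z\circ\rho^*_{|U}$ furnished by Eq.~\eqref{Eq:CommPol}.

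Next I would dispatch the two easy rows and set up the homological reduction. The middle row is exactly the short exact sequence of Whitney's Extension Theorem (Theorem \ref{WhitneyExtension}) for the pair $\rho(Z)\subset W$, whose kernel term is $\calJ^\infty(\rho(Z),W)$ by definition. The bottom row is the restriction of the middle one to the subspace $\calJ_X\subset\calC^\infty(W)$: its kernel term is $\calJ^\infty(\rho(Z),W)\cap\calJ_X=\calJ_X\cap\Kern\sfJ^\infty_{\rho(Z)}$ and its cokernel term $\sfJ^\infty_{\rho(Z)}(\calJ_X)$ is the image by definition, so it is exact for formal reasons. Writing $A_\bullet$, $B_\bullet$ for the bottom and middle rows read as three-term complexes and $A_i\hookrightarrow B_i$ for the three vertical inclusions, I obtain a degreewise split-free but degreewise exact sequence of complexes $0\to A_\bullet\to B_\bullet\to C_\bullet\to 0$ with $C_i:=B_i/A_i$. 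Since $A_\bullet$ and $B_\bullet$ are acyclic (both rows are exact), the long exact homology sequence forces $C_\bullet$ to be acyclic as well. The three columns of the diagram are tautologically the defining sequences $0\to A_i\to B_i\to C_i\to 0$, hence automatically exact; so both the column exactness and the top-row exactness are reduced to identifying the three cokernels $C_i$ with the top-row objects via the maps $\rho^*_{|U}$ and $\rho^{\sharp}_{Z,\rho(Z)}$. As these maps vanish on the respective $A_i$, they induce maps $C_i\to(\text{top object})$, and the whole theorem comes down to showing these three induced maps are isomorphisms.

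The middle identification $\calC^\infty(W)/\calJ_X\cong\calC^\infty(U)^G$ via $\rho^*_{|U}$ holds because $\Kern\rho^*_{|U}=\{f:f\circ\rho\equiv 0\text{ on }U\}=\calJ_X$ (as $\rho(U)=X$), while surjectivity onto the $G$-invariants is the Theorem of Schwarz (Theorem \ref{SchwaSFIACLG}) localized to $U$, using that $U/G$ is open in $V/G$ and $\ov\rho$ is a homeomorphism. The left identification $C_1\cong\calJ^\infty(Z,U)^G$ again has the correct kernel, $\calJ^\infty(\rho(Z),W)\cap\calJ_X$; that $f\circ\rho$ is $G$-invariant and flat on $Z$ whenever $f$ is flat on $\rho(Z)$ follows from Fa\`a di Bruno as in Theorem \ref{Thm:PullbackJet}, and the converse surjectivity is a flatness refinement of Schwarz. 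The right identification $C_3\cong\calE^\infty(Z)^{\textup{inv}}$ via $\rho^{\sharp}_{Z,\rho(Z)}$ contains the real content: the inclusion $\sfJ^\infty_{\rho(Z)}(\calJ_X)\subset\Kern\rho^{\sharp}_{Z,\rho(Z)}$ is immediate since $f\in\calJ_X$ gives $f\circ\rho\equiv 0$ on $U\supset Z$, but the reverse kernel inclusion and, above all, surjectivity onto the invariant Whitney functions are the obstacle.

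I expect that surjectivity --- equivalently, that $\sfJ^\infty_Z:\calC^\infty(U)^G\to\calE^\infty(Z)^{\textup{inv}}$ is onto --- to be the crux of the argument. The route I would take is: given $F\in\calE^\infty(Z)^{\textup{inv}}$, combine the groupoid invariance (Inv\ref{Ite:Action}) with the $\frg$-invariance (Inv\ref{Ite:Constant}) to see that $F$ is constant to infinite order along the fibres of $\rho$, hence descends to a family $\widetilde F$ on $\rho(Z)$ with $\rho^{\sharp}_{Z,\rho(Z)}\widetilde F=F$; then Whitney-extend $\widetilde F$ to $g\in\calC^\infty(W)$ via Theorem \ref{WhitneyExtension} and note that $\rho^*g\in\calC^\infty(U)^G$ has jet $F$ along $Z$. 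The delicate step --- where the hypotheses that $Z$ be subanalytic and stratified by orbit type, and the Gabrielov regularity of the Hilbert map, become indispensable --- is that the descended family $\widetilde F$ genuinely satisfies the Whitney estimates, i.e.\ lies in $\calE^\infty(\rho(Z))$ and not merely in $\sfJ^\infty(\rho(Z))$. This is precisely the Whitney analogue of Schwarz's theorem, and it is here that I would invoke the preceding propositions (cf.\ Prop.~\ref{surgrp}). Once it is established, the homological skeleton above closes the proof and, via the long exact sequence, simultaneously yields the missing kernel inclusion $\Kern\rho^{\sharp}_{Z,\rho(Z)}\subset\sfJ^\infty_{\rho(Z)}(\calJ_X)$.
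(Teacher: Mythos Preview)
Your homological scaffolding is correct and essentially mirrors the paper's: the paper checks the middle row (Whitney extension), the bottom row (tautological), the middle column (Schwarz), the left column (Prop.~\ref{flatalongZ}), and the top row (Prop.~\ref{surgrpd}) separately, then obtains exactness of the right column by a short diagram chase. Your packaging of this via the short exact sequence of complexes $0\to A_\bullet\to B_\bullet\to C_\bullet\to 0$ and the long exact sequence is an equivalent and pleasant reformulation; the diagram chase the paper does for $\Kern(\rho^{\sharp}_{Z,\rho(Z)})=\sfJ^\infty_{\rho(Z)}(\calJ_X)$ is exactly what your long exact sequence encodes.

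Where your plan diverges from the paper is in the attack on the crux, the surjectivity of $\sfJ^\infty_Z:\calC^\infty(U)^G\to\calE^\infty(Z)^{\textup{inv}}$. You propose to descend $F$ directly to a family $\widetilde F$ on $\rho(Z)$ and then verify the Whitney estimates there. This is problematic as stated: $\rho$ is not a submersion, so there is no canonical way to push a jet at $z\in Z$ forward to a jet at $\rho(z)\in\R^\ell$; one needs a formal-power-series version of Schwarz's theorem even to produce a candidate $\widetilde F(\rho(z))$, and then the Whitney estimates on $\rho(Z)\subset\R^\ell$ do not follow from those on $Z\subset V$ in any direct way. The paper avoids this by staying in the ambient space $V$: it first shows (Prop.~\ref{idiso}) that the groupoid invariance lets one extend $F$ canonically to a $G$-invariant \emph{jet} $\sfE_Z(F)$ on the saturation $G.Z$, and then the entire difficulty becomes proving that $\sfE_Z(F)$ is a Whitney function. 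This is Prop.~\ref{surgrpd}, done by induction on the depth of $Z$: on the deepest stratum one invokes Bierstone--Milman's composite-function theorem (via a Hilbert map for the slice representation, which is Gabrielov regular), and the inductive glueing uses Hestenes's lemma for subanalytic sets. Once $\sfE_Z(F)\in\calE^\infty(G.Z)^G$, the $G$-saturated case (Prop.~\ref{surgrp}, which you cite) applies and gives the desired $f\in\calC^\infty(U)^G$. So your reference to Prop.~\ref{surgrp} is apt, but only after the passage through $G.Z$; Prop.~\ref{surgrp} alone handles the saturated situation and the easy inclusion of the image, not the surjectivity for general $Z$.
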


\begin{remark}
 Let us note that as  consequence of $\rho^{\sharp}_{Z,{\rho(Z)}}(\calE^\infty(\rho(Z)))=\calE^\infty(Z)^\textup{inv} $ one can conclude that 
 $\calE^\infty(Z)^\textup{inv}\subset \calE^\infty(Z)$ is a closed subspace. Here we make use of \cite[Theorem 3.6]{BieMilCDF} and the 
 fact that $\rho:U\to X$ is regular in the sense of Gabrielov by Prop.~\ref{Prop:GabrielovRegularityPolynomials}. Moreover, 
 occasionally $\sfJ^\infty_Z$ admits a continuous split (this is the case if and only if the interior of $Z$ is dense in $Z$, 
 cf.~\cite{BieEWFSS}). In this situation, employing the split of $\rho_{|U}^*$ (see Theorem \ref{SchwaSFIACLG}) we can conclude that 
 $\rho^{\sharp}_{Z,{\rho(Z)}}$ is split surjective. It is not known to the authors under what condition on $Z$ the map 
 $\rho^{\sharp}_{Z,\rho(Z)}$ is actually split. Moreover, it is unclear if the image of 
 of $\rho^{\sharp}_{Z,{\rho(Z)}}$ is $\calE^\infty(Z)^\textup{inv}$ with weaker assumptions on $Z\subset V$.
\end{remark}


Before we turn to the proof of Theorem \ref{maindiag} we need a couple of auxiliary results.

\begin{proposition} \label{Prop:TopProp}
Let $G$, $V$ be as in  Theorem  \ref{maindiag},  $\rho : V \rightarrow \R^\ell$ a Hilbert map,
and $Z\subset V$ closed in some $G$-invariant open set $U\subset V$. Then the following holds true: 
\begin{enumerate}
\item \label{It:Closed} $G.Z$ is closed in $U$.
\item \label{It:QuotientTop} 
      $G.Z$ carries the quotient topology with respect to the restricted action 
      \[ \Phi_Z:G\times Z\to G.Z,\: (g,z) \mapsto g.z .\]
\item \label{ite:PropernessRestriction}
      For every open  $W \subset \R^\ell$ with $\rho (U) = \rho (V) \cap W $ the Hilbert map 
      $\rho : V \rightarrow \R^\ell$ restricts to a proper map 
      $\rho_{|U} : U \rightarrow W$.
\item \label{It:ExOpNbhd}
      There exists an open set $W \subset \R^\ell$ such that 
      $\rho (U) = \rho (V) \cap W $. If $\rho (U)$ is semialgebraic (resp.~subanalytic), $W$ can be chosen 
      to be semialgebraic (resp.~subanalytic) as well. 
      In both cases $\rho (U)\subset W$ is Nash subanalytic. 
\end{enumerate}
\end{proposition}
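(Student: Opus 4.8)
The plan is to establish the four assertions largely independently, using compactness of $G$ for (1) and (2), properness of the Hilbert map together with orbit separation for (3), and the Hilbert embedding plus the stability properties of the (sub)analytic category for (4).

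For (1) and (2) I would run a single sequential compactness argument, noting that $U$ is metrizable so sequential closedness suffices. Given $x_k = g_k.z_k \to x \in U$ with $g_k \in G$ and $z_k \in Z$, compactness of $G$ lets me pass to a subsequence with $g_k \to g$; then $z_k = g_k^{-1}.x_k \to g^{-1}.x$, which lies in $U$ since $U$ is $G$-invariant, hence in $Z$ since $Z$ is closed in $U$. Thus $x = g.(g^{-1}.x) \in G.Z$, proving (1). The same computation shows that $\Phi_Z$ is a closed map: any closed $C \subseteq G \times Z$ is closed in $G \times U$ (as $Z$ is closed in $U$), and the limit $(g, g^{-1}.x)$ of the subsequence above again belongs to $C$, so $\Phi_Z(C)$ is closed in $U$ and hence in $G.Z$. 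A continuous closed surjection is a quotient map, which gives (2).

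For (3) I would first record that $\rho$ is proper: since the action is orthogonal, $\langle v,v\rangle$ is a $G$-invariant polynomial, so by Hilbert--Weyl it equals $p\circ\rho$ for some polynomial $p$; hence $\rho(v)$ bounded forces $v$ bounded, and $\rho^{-1}(K)$ is compact for every compact $K$. The key observation is then that for compact $K \subseteq W$ one has $\rho^{-1}(K) \subseteq U$: if $\rho(v) \in K \subseteq W$ then $\rho(v) \in \rho(V) \cap W = \rho(U)$, so $\rho(v) = \rho(u)$ for some $u \in U$; because the invariants separate orbits (equivalently $\cc\rho$ is injective), $v$ and $u$ lie on one orbit, and $G$-invariance of $U$ yields $v \in U$. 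Therefore $\rho_{|U}^{-1}(K) = \rho^{-1}(K)$ is compact, so $\rho_{|U}$ is proper.

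Finally, for (4) I would use that the orbit map $\pi$ is open, so $\pi(U)$ is open in $V/G$, whence $\rho(U) = \cc\rho(\pi(U))$ is open in $X := \rho(V)$ because $\cc\rho$ is a homeomorphism onto $X$. Properness of $\rho$ makes $X$ closed in $\R^\ell$, so $A := X \setminus \rho(U)$ is closed in $\R^\ell$ and $W := \R^\ell \setminus A$ is open with $X \cap W = \rho(U)$. As $X$ is semialgebraic by Tarski--Seidenberg, if $\rho(U)$ is semialgebraic then so are $A$ and $W$; if $\rho(U)$ is subanalytic then, using closure of the subanalytic class under complements and intersections, $A$ and $W$ are subanalytic. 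The remaining claim, that $\rho(U) \subseteq W$ is Nash subanalytic, I would deduce from the fact that $X = \rho(V)$ is the image of the semialgebraic set $V = \R^n$ under the proper polynomial (hence Nash) map $\rho$, so that $X$ is Nash subanalytic, and $\rho(U) = X \cap W$ inherits this property upon intersecting with the semialgebraic, resp.\ subanalytic, set $W$. I expect this last point to be the main obstacle, since it is the only step that genuinely relies on the machinery of subanalytic geometry (Bierstone--Milman and Gabrielov regularity) rather than on soft topology; everything else is routine once orbit separation and properness are in hand.
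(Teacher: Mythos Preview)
Your argument for (1)--(3) and the construction of $W$ in (4) follow the paper's proof essentially verbatim; the paper writes $W = \R^\ell \setminus \rho(V\setminus U)$ and notes $\rho(V\setminus U) = \rho(V)\setminus\rho(U)$ by $G$-invariance of $U$, which is precisely your set $A$. The one place you diverge is the Nash subanalyticity of $\rho(U)$ in $W$, which you flag as the main obstacle. It is in fact simpler than you anticipate: by part (3) the restriction $\rho_{|U}\colon U\to W$ is already proper, and since $\rho$ is polynomial it is Gabrielov regular at every point (Proposition~\ref{Prop:GabrielovRegularityPolynomials}), so $\rho(U)\subset W$ is Nash subanalytic directly from Definition~\ref{Def:NashSubanalytic}. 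Your detour through $X=\rho(V)$ and an inheritance argument under intersection with $W$ is unnecessary---though it can be completed, since restricting a proper Gabrielov regular map to the preimage of an open set preserves both properness and Gabrielov regularity.
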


 \begin{proof} Claim (\ref{It:Closed}) follows immediately by compactness of $G$.
   %
   %
   To prove (\ref{It:QuotientTop}), let $A\subset G.Z$ be such that   $\Phi_Z^{-1}(A)$ is a 
   closed subset of $G\times Z$. We have to show that $A$ is closed in $G.Z$.
   To this end consider a sequence of points $ g_k.z_k \in A$, where $k\in \N$, $g_k \in G$, and $z_k\in Z$.
   Assume that $(g_k.z_k)$ converges to some $a\in G.Z$. By compactness of $G$ one concludes that after 
   possibly passing to a subsequence, $(g_k)$ converges to some $g\in G$. Then $(z_k)$ converges to a
   point $z:= g^{-1} a$. Since $a\in U$, and $U$ is $G$-invariant, one has $z\in U$. Hence $z\in Z$ since
   $Z$ is closed in $U$.  By assumption, $\Phi_Z^{-1}(A)$ is closed in $G\times Z$, and 
   $(g_k,z_k) \in\Phi_Z^{-1}(A)$ for all $k\in \N$. But then $(g,z) \in \Phi_Z^{-1}(A)$, hence 
   $a = g.z \in A$ which shows (\ref{It:QuotientTop}).
  
   For every open $W \subset \R^\ell$ such that $\rho (U) = \rho (V) \cap W $ properness of 
   the restricted map $\rho_{|U} : U \to W$ follows immediately from $\rho$ being proper. 
   This gives (\ref{ite:PropernessRestriction}).
   
  To prove (\ref{It:ExOpNbhd}), observe that $\rho$ is a closed map since 
  since $\R^n$ is locally compact and $\rho$ a proper map. Hence 
  $W := \R^\ell \setminus \rho (V\setminus U)$ is open, and contains $\rho (U)$ as a 
  subset since $ \varrho (V\setminus U)  =\rho (V) \setminus \rho (U)$ by $G$-invariance of $U$
  and by the fact that $\rho$ is a Hilbert map. 
  By construction, $W$ is semialgebraic (resp.~subanalytic), if $\rho (U)$ is. 
  Since $U$ is $G$-invariant and since $\rho$ factors through an injective map on the orbit space $V/G$ 
  the equality  $\rho (U) = W \cap \rho (V)$ holds true.
  Finally, $\rho (U) \subset W$ is Nash subanalytic in the sense of Def.~\ref{Def:NashSubanalytic}
  since $\rho$ is Gabrielov regular by Prop.~\ref{Prop:GabrielovRegularityPolynomials}
  and since  $\rho_{|U} : U \to W$ is proper. 
\end{proof}

\begin{proposition}
\label{BSrefinement}
Let $G$ be a compact Lie group, $M$ a smooth $G$-manifold and $Z\subset M$ a 
closed subset. Then 
\begin{eqnarray}\label{Hsaturation}
  \calJ^\infty(Z,M)^G=\calJ^\infty(G.Z,M)^G.
\end{eqnarray}
\end{proposition}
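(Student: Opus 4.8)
The plan is to establish the two inclusions separately, the real work residing entirely in one of them. First note that $G.Z$ is closed in $M$ by compactness of $G$, exactly as in Proposition~\ref{Prop:TopProp}(\ref{It:Closed}) applied with $U=M$, so that $\calJ^\infty(G.Z,M)^G$ is well defined. The inclusion $\calJ^\infty(G.Z,M)^G\subset\calJ^\infty(Z,M)^G$ is then immediate: since $Z\subset G.Z$, any smooth function flat on $G.Z$ is a fortiori flat on $Z$, and $G$-invariance is common to both sides. Everything therefore comes down to the reverse inclusion, which I would reduce to the pointwise assertion that a $G$-invariant function $f$ which is flat at a point $z\in Z$ is automatically flat at every point $g.z$, $g\in G$; since each point of $G.Z$ is of this form, this yields flatness of $f$ on all of $G.Z$.

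For the pointwise step the mechanism is the diffeomorphism invariance of flatness together with the invariance of $f$. Writing $\Phi_g:M\to M$, $x\mapsto g.x$, for the action diffeomorphism, $G$-invariance of $f$ means $f\circ\Phi_{g^{-1}}=f$, and $\Phi_{g^{-1}}$ carries $g.z$ to $z$. After passing to charts about $g.z$ and $z$ so as to be in the Euclidean situation of Theorem~\ref{Thm:PullbackJet}, I would specialize that theorem to the singletons $X=\{g.z\}$, $Y=\{z\}$ and the map $\Phi_{g^{-1}}$. Its commutativity relation \eqref{Eq:CommPol} then gives
\[
\sfJ^\infty_{\{g.z\}}(f)=\sfJ^\infty_{\{g.z\}}\big(f\circ\Phi_{g^{-1}}\big)=(\Phi_{g^{-1}})^{\sharp}_{\{g.z\},\{z\}}\,\sfJ^\infty_{\{z\}}(f).
\]
Since $f$ is flat at $z$, the jet $\sfJ^\infty_{\{z\}}(f)$ vanishes in every component, and the explicit pullback formula \eqref{Eq:DefPullbackJet} sends this zero jet to the zero jet, because each of its components is a finite sum of products each carrying a factor coming from $\sfJ^\infty_{\{z\}}(f)$. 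Hence $\sfJ^\infty_{\{g.z\}}(f)=0$, i.e.\ $f$ is flat at $g.z$, as required.

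I expect no genuine obstacle here; the statement is essentially the observation that flatness at a point is invariant under the action diffeomorphisms $\Phi_g$, which move $Z$ inside $G.Z$ while leaving $f$ fixed. The only items that must be recorded, rather than overcome, are first the passage from $M$ to a Euclidean chart (flatness of a function at a point being chart independent, once more by the formula of Fa\`a di Bruno), and second the elementary fact that the jet pullback \eqref{Eq:DefPullbackJet} maps the identically zero jet to the zero jet, which is what closes the argument. Both are bookkeeping rather than difficulty.
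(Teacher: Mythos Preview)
Your argument is correct and rests on the same idea as the paper's proof: the chain rule (Fa\`a di Bruno) shows that the infinite jet of a $G$-invariant $f$ at $g.z$ is determined by its jet at $z$, so flatness at $z$ forces flatness at $g.z$. The packaging differs slightly. The paper differentiates $f\circ\Phi_g=f$ at $z$ and argues inductively on the order of derivatives, using invertibility of the Jacobian of $\Phi_g$ at each step to solve for the derivatives of $f$ at $g.z$ in terms of those at $z$. By working instead with $\Phi_{g^{-1}}$ and invoking Theorem~\ref{Thm:PullbackJet} (specifically \eqref{Eq:CommPol}), you reduce the matter to ``pullback of the zero jet is the zero jet,'' which is immediate from linearity or from \eqref{Eq:DefPullbackJet}; this bypasses both the induction and the explicit appeal to invertibility of the Jacobian. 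It is a modest streamlining rather than a different route.
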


\begin{proof} In order to prove Eq.~(\ref{Hsaturation}), it suffices to prove the 
inclusion $\subset$ since the converse inclusion is trivial. 
For $g\in G$ we denote the $g$-action on $M$ by $\Phi_g : M \to M$, $x \mapsto g.x $. 
The idea of the proof is to take partial derivatives of 
\begin{equation}\label{Ginv}
  f\circ\Phi_g=f
\end{equation}
and evaluate the result at $z\in Z$. Using local coordinates $(x^1,\dots,x^m)$ 
around $z$ and $(y^1,\dots,y^m)$ around $g.z$, and taking first order partial 
derivatives we obtain
\[
  \sum_j\frac{\partial f}{\partial y^j}(g.z)\frac{\partial \Phi^j_g}{\partial x^i}(z)=
  \frac{\partial f}{\partial x^i}(z).
\]
We see that $\frac{\partial f}{\partial x^i}(z)=0$ is equivalent to 
$\frac{\partial f}{\partial y^j}(g.z)=0$, since the Jacobi matrix of $\Phi_g$ is invertible. 
Let us assume that $f$ is flat on $Z$ and continue inductively. By induction hypothesis 
\begin{equation}\label{lot}
  \frac{\partial^k f}{\partial y^{j_1}\dots \partial y^{j_k}}(g.z)=0 \mbox{ for }k\le n-1.
\end{equation}
Taking $n$-th partial derivatives of equation (\ref{Ginv}) we find (cf. Theorem \ref{FdBcomb})
\begin{eqnarray*}
  \sum_{j_1,\dots,j_n}\frac{\partial^n f}{\partial y^{j_1}\dots \partial y^{j_n}}(g.z)
  \frac{\partial \Phi^{j_1}_g}{\partial x^{i_1}}(z)\cdots
  \frac{\partial \Phi^{j_n}_g}{\partial x^{i_n}}(z)+\text{lower order terms}
  =\frac{\partial^n f}{\partial x^{i_1}\dots \partial x^{i_n}}(z).
\end{eqnarray*}
Here, by lower oder terms we mean terms containing  a factor of the form (\ref{lot}). 
It follows again from the invertibility of the Jacobian of $\Phi_g$, that 
$\frac{\partial^n f}{\partial y^{j_1}\dots \partial y^{j_n}}(g.z)=0$, which proves 
Eq.~(\ref{Hsaturation}).
\end{proof}

\begin{proposition} \label{flatalongZ} Under the assumptions of Theorem \ref{maindiag}
the image of the pullback 
$\rho^*:\calJ^\infty(\rho(Z),W)\to \calJ^\infty(Z,U)$ is actually $\calJ^\infty(Z,U)^G$.
\end{proposition}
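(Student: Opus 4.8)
The inclusion $\rho^*\big(\calJ^\infty(\rho(Z),W)\big)\subseteq \calJ^\infty(Z,U)^G$ is the easy half, and I would dispose of it first. For $f\in\calJ^\infty(\rho(Z),W)$ the composite $\rho^*f=f\circ\rho$ is $G$-invariant because $\rho$ is constant on orbits, while flatness on $Z$ is exactly the commutativity of the left square in Theorem~\ref{Thm:PullbackJet} applied to $\varphi=\rho_{|U}\colon U\to W$ with $Z\subset U$ and $\rho(Z)\subset W$: the formula \eqref{Eq:DefPullbackJet} of Fa\`a di Bruno turns the vanishing of every derivative of $f$ along $\rho(Z)$ into the vanishing of every derivative of $f\circ\rho$ along $Z$. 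Hence $\rho^*f\in\calJ^\infty(Z,U)\cap\calC^\infty(U)^G=\calJ^\infty(Z,U)^G$.

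The substance is the reverse inclusion, and my plan has three steps. Given $h\in\calJ^\infty(Z,U)^G$, I would first pass to the saturation: by Proposition~\ref{BSrefinement} (with $M=U$) one has $\calJ^\infty(Z,U)^G=\calJ^\infty(G.Z,U)^G$, so $h$ is $G$-invariant and flat on the $G$-stable, closed, subanalytic, orbit-type stratified set $G.Z$. Second, I would produce an initial preimage $f_0\in\calC^\infty(W)$ with $f_0\circ\rho=h$. This is the Theorem of Schwarz (Theorem~\ref{SchwaSFIACLG}) in its form localized over $W$: since $U=\rho^{-1}(W)$ is $G$-saturated and $\rho_{|U}\colon U\to W$ is proper (Proposition~\ref{Prop:TopProp}), a standard $G$-invariant partition-of-unity argument subordinate to a relatively compact cover of $W$ — extending each piece $(\psi_i\circ\rho)\,h$ by zero to an invariant function on all of $V$, applying the global split, and reassembling with cut-offs — reduces the localized statement to the global one.

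The third and decisive step is to correct $f_0$ so that it becomes flat on $\rho(Z)$ without altering its pullback. Since $f\circ\rho$ depends only on $f_{|X}$ with $X=\rho(U)$, I am free to modify $f_0$ by any element of $\calJ_X=\calJ(X,W)$; thus it suffices to find $f=f_0-g$ with $g\in\calJ_X$ and $\sfJ^\infty_{\rho(Z)}(f)=0$. Writing $j:=\sfJ^\infty_{\rho(Z)}(f_0)\in\calE^\infty(\rho(Z))$, this is equivalent to the vanishing of the class of $f_{0|X}$ in $\calE^\infty(\rho(Z),X)=\calE^\infty(\rho(Z))/\calJ_X\cdot\calE^\infty(\rho(Z))$ (see \eqref{Eq:QR}), i.e.\ to $f_{0|X}\in\calC^\infty(X,\rho(Z))$ in the sense of \eqref{Eq:DefSmFctRel}; for then \eqref{Eq:DefSmFctRel} hands me at once a $\tilde f\in\calJ^\infty(\rho(Z),W)$ with $\tilde f_{|X}=f_{0|X}$, which is the sought flat preimage, as $\rho^*\tilde f=\rho^*f_0=h$. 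Equivalently, the correction $g$ can be built by hand once membership $j\in\calJ_X\cdot\calE^\infty(\rho(Z))$ is known: if $j=\sum_i\sfJ^\infty_{\rho(Z)}(\phi_i)\,F_i$ with $\phi_i\in\calJ_X$ and $F_i=\sfJ^\infty_{\rho(Z)}(\psi_i)$ realized through Whitney's Extension Theorem~\ref{WhitneyExtension}, then $g:=\sum_i\phi_i\psi_i\in\calJ_X$ satisfies $\sfJ^\infty_{\rho(Z)}(g)=j$ since $\sfJ^\infty_{\rho(Z)}$ is an algebra homomorphism.

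I expect this last step to be the main obstacle; everything reduces to the inclusion $\ker\rho^{\sharp}_{Z,\rho(Z)}\subseteq\calJ_X\cdot\calE^\infty(\rho(Z))$, and $j$ does lie in that kernel: by commutativity of jets with the formal pullback \eqref{Eq:CommPol}, $\rho^{\sharp}_{Z,\rho(Z)}(j)=\sfJ^\infty_Z(f_0\circ\rho)=\sfJ^\infty_Z(h)=0$. What makes the inclusion hold is the $G$-stability of $G.Z$, which yields $G.Z=\rho_{|U}^{-1}(\rho(Z))$, so that $G.Z$ and $\rho(Z)$ are genuinely corresponding subanalytic sets under the proper map $\rho_{|U}$. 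Flatness of $h$ along $G.Z$ should then force flatness of $f_{0|X}$ along $\rho(Z)$ inside the smooth structure of $X$, by combining a \L{}ojasiewicz inequality for $\rho_{|U}$ (comparing $\operatorname{dist}(\,\cdot\,,G.Z)$ with a power of $\operatorname{dist}(\rho(\,\cdot\,),\rho(Z))$, legitimate because the two sets have the same $\rho$-behaviour) with the composite function theorem of Bierstone--Milman \cite{BieMilCDF}. The delicate point is to control not merely the values but all the derivatives that enter $\calC^\infty(X,\rho(Z))$; this is exactly where the Gabrielov regularity of $\rho$ (Proposition~\ref{Prop:GabrielovRegularityPolynomials}) and the Nash subanalyticity of $X\subset W$ (Proposition~\ref{Prop:TopProp}) are indispensable, guaranteeing that the required division and flatness propagate in a uniform, Whitney-compatible manner.
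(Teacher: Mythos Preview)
Your easy inclusion and the first two reductions (passing to $G.Z$ via Proposition~\ref{BSrefinement}, then lifting $h$ to some $f_0\in\calC^\infty(W)$ via a localized Schwarz) are correct and match the paper in spirit. The divergence, and the gap, is in your third step. Having produced $f_0$ with $f_0\circ\rho=h$, you propose to \emph{correct} $f_0$ by an element of $\calJ_X$ so that it becomes flat on $\rho(Z)$, and you leave this as a hoped-for consequence of a \L{}ojasiewicz inequality combined with the Bierstone--Milman theorem. This is not carried out, and the \L{}ojasiewicz route is a red herring: a distance comparison between $G.Z$ and $\rho(Z)$ will not by itself manufacture the required vanishing of the jet $j=\sfJ^\infty_{\rho(Z)}(f_0)$ modulo $\calJ_X\cdot\calE^\infty(\rho(Z))$. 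What you need is a statement about \emph{formal composites}, not about decay rates.

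The paper avoids the detour by applying the Bierstone--Milman composite function property \cite[Thm.~3.2]{BieMilCDF}, \cite[Thm.~1.13]{BieMilGDPSS} directly to the \emph{flat ideal} rather than to a single lifted function. Because $X=\rho(U)\subset W$ is Nash subanalytic (Proposition~\ref{Prop:TopProp}\,(\ref{It:ExOpNbhd})), the pair $\rho(Z)\subset X$ has the composite function property, which in one stroke gives
\[
\rho_{|U}^*\big(\calJ^\infty(\rho(Z),W)\big)
=\big(\rho_{|U}^*\calC^\infty(W)\big)^{\wedge}\cap\calJ^\infty\big(\rho^{-1}(\rho(Z)),U\big).
\]
Now $\rho^{-1}(\rho(Z))=G.Z$, and by Schwarz every $G$-invariant smooth function on $U$ is already (formally and actually) a composite with $\rho_{|U}$, so the right-hand side is $\calC^\infty(U)^G\cap\calJ^\infty(G.Z,U)=\calJ^\infty(G.Z,U)^G=\calJ^\infty(Z,U)^G$. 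The ``correction modulo $\calJ_X$'' you were attempting is precisely what the composite function property delivers for free; once you see that the Bierstone--Milman theorem is stated for pairs $Y\subset X$ and applies to $\calJ^\infty(Y,W)$, no separate argument is needed.
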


\begin{proof} First of all, a function of the form $f\circ \rho\in \calC^\infty(U)$ is clearly 
invariant. Moreover, if $f$ is flat along $\rho(Z)$, then by the formula of Fa{\`a} di Bruno 
(cf. Theorem \ref{FdBcomb}), $f\circ \rho$ is flat along $Z$.

To prove the converse, recall that $X = \varrho (U)\subset W$ is Nash subanalytic by 
Prop.~\ref{Prop:TopProp} (\ref{It:ExOpNbhd}), which 
by the work of Bierstone and Milman  \cite[Thm.~3.2]{BieMilCDF} \& \cite[Thm.~1.13]{BieMilGDPSS} entails that 
the pair $\rho(Z)\subset X$ of subanalytic sets has the 
\emph{composite function property}.  That is, for any proper real-analytic map 
$\varphi:M\to  W$ from a real-analytic manifold $M$ such that 
$\varphi(M)=X$ one has
\begin{equation}\label{composite}
  \varphi^*(\calJ^\infty(\rho(Z), W))=
  \left( \varphi^*(\calJ^\infty(\rho (Z), W))\right)^{\wedge}.
\end{equation}

Let us explain what the right hand side of Eq.~(\ref{composite}) means. 
A smooth function $f$ on $M$ is \emph{formally a composite with} 
$\varphi$ if for any $b\in \varphi(M)$ there exists an  
$F_b\in \widehat{\mathcal O}_b$ 
such that for all $a\in \varphi^{-1}(b)$
\[
   \widehat{f}_a=F_b\circ \widehat{\varphi}_a,
\]
where $\widehat{f}_a$ and $\widehat{\varphi}_a$ are the formal Taylor expansions at 
$a$ of $f$ and $\varphi$, respectively. The set of formally composite functions 
with $\varphi$ is denoted by $\big(\varphi^*\calC^\infty(W)\big)^\wedge$. 
Setting $Y:=\rho(Z)$, the space $\big(\varphi^*\calJ^\infty(Z,\mathbb R^\ell)\big)^\wedge$ is 
defined as the intersection 
$\left(\varphi^*\mathcal C^\infty(W)\right)^\wedge 
 \cap\calJ^\infty(\varphi^{-1}(Y),M)$.

Recall from above that the restricted Hilbert map $\rho_{|U} : U \to W$ is proper. Hence we may 
specialize Eq.~(\ref{composite}) to the case $\varphi=\rho_{|U}$ and conclude, remembering $\rho^{-1}(\rho(Z))=G.Z$, that
\begin{eqnarray*} \label{lhs}
  \rho_{|U}^*(\calJ^\infty(\rho (Z), W ))&=&
  \big( \rho_{|U}^*(\calJ^\infty(\rho(Z),W ))\big)^{\wedge}\\
  &=&\left(\rho_{|U}^*\calC^\infty( W )\right)^{\wedge}\cap 
  \calJ^\infty\left(\rho^{-1}(\rho(Z)),U\right)\\
   &=&\ \rho_{|U}^*\calC^\infty( W )\cap 
  \calJ^\infty\left(\rho^{-1}(\rho(Z)),U\right)\\
   &=&\ \calC^\infty(U)^G \cap \calJ^\infty(G.Z,U) \ .
\end{eqnarray*}
 Using Theorem \ref{SchwaSFIACLG} on differentiable invariants we see 
that $\rho^*(\calJ^\infty(\rho (Z), W))$ coincides with $\calJ^\infty(G.Z,U)^G$, which is  
$\calJ^\infty(Z,U)^G$ by Proposition \ref{BSrefinement}.
\end{proof}

\begin{proposition} \label{surgrp} Assume that $G$, $V$ and $\rho$ are as in Theorem \ref{maindiag} 
  and that $Z$ is a closed subset of an open $G$-invariant subset $U\subset V$. 
  Then the following statements hold true.
\begin{enumerate} 
\item \label{surInvRels}
  Over the $G$-stable set $G.Z$, the relation
  $\calE^\infty(G.Z)^G = \calE^\infty(G.Z)^\textup{inv}$ holds true.
\item \label{surJgrp} 
 The Taylor morphism $\sfJ^\infty_{G.Z}: \calC^\infty(U)^G\to \calE^\infty(G.Z)^G$ is onto 
  with kernel $\calJ^\infty(G.Z,U)^G$. 
\item \label{surpbgrp} 
  The image of the pullback $\rho^{\sharp}_{G.Z,\rho(Z)}:\calE^\infty(\rho(Z))\to \calE^\infty(G.Z)$ 
  is $\calE^\infty(G.Z)^G$.
\item \label{imJgrpd} 
  The Taylor morphism $\sfJ^\infty_{Z}: \calC^\infty(U)^G\to \calE^\infty(Z)$ has image 
  in the space of invariant Whitney functions $\calE^\infty(Z)^\textup{inv}$.
\item \label{impbgrpd} 
  The image of the pullback $\rho^{\sharp}_{Z,\rho(Z)}:\calE^\infty(\rho(Z))\to \calE^\infty(Z)$ is 
  contained in  the space $\calE^\infty(Z)^\textup{inv}$. 
\end{enumerate}
\end{proposition}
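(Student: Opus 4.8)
The plan is to prove the five claims in a sequence in which each rests on its predecessors, deducing the two ``image-lands-in-$\textup{inv}$'' statements~(\ref{imJgrpd}) and~(\ref{impbgrpd}) from the structural results (\ref{surInvRels})--(\ref{surpbgrp}) over the saturation $G.Z$. I would dispose of~(\ref{imJgrpd}) first, since it is direct. For $f\in\calC^\infty(U)^G$ the identity $f\circ\Phi_g=f$ combined with Eq.~\eqref{Eq:CommPol} yields, for every arrow $(g,z)\in\Gamma_Z$, the equality $(\Phi_g)^{\sharp}_{\{z\},\{g.z\}}\big(\sfJ^\infty(f)(g.z)\big)=\sfJ^\infty(f\circ\Phi_g)(z)=\sfJ^\infty(f)(z)$, which is exactly (Inv\ref{Ite:Action}); and since $\xi_V$ is a derivation commuting with the Taylor morphism, invariance of $f$ gives $\xi_V\,\sfJ^\infty_Z f=\sfJ^\infty_Z(\xi_V f)=0$, which is (Inv\ref{Ite:Constant}). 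Hence $\sfJ^\infty_Z f\in\calE^\infty(Z)^\textup{inv}$.

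The technical heart is the surjectivity~(\ref{surJgrp}), which I would obtain by Haar averaging. Given $F\in\calE^\infty(G.Z)^G$, Whitney's Extension Theorem~\ref{WhitneyExtension} furnishes $f\in\calC^\infty(U)$ with $\sfJ^\infty_{G.Z}f=F$, and $\tilde f(x):=\int_G f(g.x)\,dg$ with respect to normalized Haar measure lies in $\calC^\infty(U)^G$. For $z\in G.Z$ one has $g.z\in G.Z$ by $G$-stability, so $\sfJ^\infty(f)(g.z)=F(g.z)$; differentiating under the integral (legitimate by compactness of $G$ and smoothness of the action) and using Eq.~\eqref{Eq:CommPol} together with (Inv\ref{Ite:Action}) for $F$ gives $\partial^\alpha\tilde f(z)=\int_G\big((\Phi_g)^{\sharp}_{\{z\},\{g.z\}}(F(g.z))\big)_\alpha\,dg=F_\alpha(z)$, whence $\sfJ^\infty_{G.Z}\tilde f=F$. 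The kernel is $\calJ^\infty(G.Z,U)^G$ by definition of flatness. Claim~(\ref{surInvRels}) is then immediate: the inclusion $\calE^\infty(G.Z)^\textup{inv}\subset\calE^\infty(G.Z)^G$ is trivial, and conversely writing $F=\sfJ^\infty_{G.Z}\tilde f$ with $\tilde f\in\calC^\infty(U)^G$ shows $\xi_V F=\sfJ^\infty_{G.Z}(\xi_V\tilde f)=0$, since fundamental vector fields annihilate invariant functions.

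For~(\ref{surpbgrp}) the inclusion $\subset$ is a formal consequence of $\rho\circ\Phi_g=\rho$ and the contravariant functoriality $(\rho\circ\Phi_g)^{\sharp}=(\Phi_g)^{\sharp}\circ\rho^{\sharp}$ of the jet pullback from Theorem~\ref{Thm:PullbackJet}, which forces every $\rho^{\sharp}_{G.Z,\rho(Z)}H$ to satisfy (Inv\ref{Ite:Action}). The reverse inclusion is the main obstacle and the only place where the Theorem of Schwarz and Mather~\ref{SchwaSFIACLG} is genuinely used: given $F\in\calE^\infty(G.Z)^G$, I use~(\ref{surJgrp}) to write $F=\sfJ^\infty_{G.Z}\tilde f$ with $\tilde f\in\calC^\infty(U)^G$, factor $\tilde f=h\circ\rho$ through the Hilbert map by Schwarz--Mather --- here the neighbourhood bookkeeping of Prop.~\ref{Prop:TopProp}, giving an open $W$ in which $X=\rho(U)$ is closed and over which $\rho_{|U}$ is proper, is needed to locate $h\in\calC^\infty(W)$ --- and set $H:=\sfJ^\infty_{\rho(Z)}h\in\calE^\infty(\rho(Z))$. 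Then Eq.~\eqref{Eq:CommPol} yields $\rho^{\sharp}_{G.Z,\rho(Z)}H=\sfJ^\infty_{G.Z}(h\circ\rho)=F$.

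Finally~(\ref{impbgrpd}) follows by composition. Writing $\rho_{|Z}$ as $Z\hookrightarrow G.Z\xrightarrow{\rho}\rho(Z)$ and using functoriality together with the identity $\res^{G.Z}_Z=\id^{\sharp}_{Z,G.Z}$ gives $\rho^{\sharp}_{Z,\rho(Z)}=\res^{G.Z}_Z\circ\,\rho^{\sharp}_{G.Z,\rho(Z)}$. By~(\ref{surpbgrp}) and~(\ref{surInvRels}) the image of $\rho^{\sharp}_{G.Z,\rho(Z)}$ is $\calE^\infty(G.Z)^\textup{inv}$, so it suffices to check that $\res^{G.Z}_Z$ carries $\calE^\infty(G.Z)^\textup{inv}$ into $\calE^\infty(Z)^\textup{inv}$. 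This is clear: (Inv\ref{Ite:Constant}) survives restriction because $\xi_V$ commutes with $\res^{G.Z}_Z$, and (Inv\ref{Ite:Action}) survives because every arrow $(g,z)\in\Gamma_Z$ is in particular an arrow of $\Gamma_{G.Z}$ while the relevant jet components at $z$ and $g.z$ are unchanged under restriction.
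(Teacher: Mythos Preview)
Your proof is correct and follows essentially the same route as the paper for parts (\ref{surInvRels})--(\ref{imJgrpd}): Whitney extension plus Haar averaging for (\ref{surJgrp}), deducing (\ref{surInvRels}) from the fact that the preimage $\tilde f\in\calC^\infty(U)^G$ is annihilated by fundamental vector fields, and then Schwarz--Mather for (\ref{surpbgrp}). Your treatment of (\ref{imJgrpd}) is the same as the paper's, only placed earlier.

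The one genuine difference is in (\ref{impbgrpd}). The paper's terse ``analogously to (\ref{surpbgrp})'' amounts to: given $H\in\calE^\infty(\rho(Z))$, extend by Whitney to $h\in\calC^\infty(W)$, note $h\circ\rho_{|U}\in\calC^\infty(U)^G$, and then $\rho^{\sharp}_{Z,\rho(Z)}H=\sfJ^\infty_Z(h\circ\rho_{|U})\in\calE^\infty(Z)^\textup{inv}$ by (\ref{imJgrpd}). You instead factor $\rho^{\sharp}_{Z,\rho(Z)}=\res^{G.Z}_Z\circ\rho^{\sharp}_{G.Z,\rho(Z)}$ and push through (\ref{surpbgrp}) and (\ref{surInvRels}), checking that restriction preserves invariance. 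Both arguments are short and correct; yours has the mild advantage of making the relation to the saturated case explicit (foreshadowing the isomorphism $\iota_Z$ used later in the paper), while the paper's is marginally more direct since it avoids the detour through $G.Z$.
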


\begin{proof} Take an $F\in\calE^\infty(G.Z)^\textup{inv}$. By Whitney's extension theorem 
there exists an $f\in \calC^\infty(U)$ such that $ \sfJ^\infty_{G.Z}(f)=F$.
Taking the average with respect to the Haar measure on $G$ and using the equivariance of 
the Taylor morphism, we obtain (cf.~Theorem \ref{Thm:PullbackJet})
\begin{eqnarray*}
\sfJ^\infty_{G.Z}(\operatorname{Average}(f))=\sfJ^\infty_{G.Z}\left(\frac{1}{\operatorname{vol}(G)}\int_G g^*(f) \,
\mathrm{d}g  \right)=\frac{1}{\operatorname{vol}(G)}\int_G g^{\sharp}(F)\, \mathrm{d}g =F.
\end{eqnarray*}
This proves (\ref{surJgrp}), and also (\ref{surInvRels}), since 
$\operatorname{Average}(f)$ is constant along orbits, which means that 
$\xi_U\operatorname{Average}(f)=0$ for all $\xi\in \frg$.

By the Theorem of Schwarz and Mather \ref{SchwaSFIACLG} we find an 
$h\in \calC^\infty(W)$ such that $\operatorname{Average}(f)=\rho_{|U}^*(h)$. By Theorem \ref{Thm:PullbackJet} we have
\[F=(\sfJ^\infty_{G.Z}\circ \rho_{|U}^*)(h)=(\rho_{|U}^{\sharp}\circ\sfJ^\infty_{\rho(Z)})(h), \]
which proves (\ref{surpbgrp}).

To prove \ref{imJgrpd}, fix $f\in  \calC^\infty(U)^G$, and let $(g,z) \in (G\ltimes V)_{|Z}$. By $G$-invariance of 
$f$, one concludes that 
\[
  (\Phi_g)^{\sharp}_{\{z\},\{g.z\}}\big( \sfJ^\infty_{g.z} (f) \big) = \big (\sfJ^\infty_{z} \circ \Phi_g^* \big) (f) =
  \sfJ^\infty_{z} (f) ,
\]
hence  $\sfJ^\infty_{Z} (f)$ is invariant under the groupoid action $(G\ltimes V)_{|Z}$ . 
In addition, $\sfJ^\infty_{Z} (f)$ is invariant under the $\frg$-action, since 
$f$ is constant along orbits, hence $\xi_U f =0$ for all $\xi\in \frg$.
One now shows (\ref{impbgrpd})  analogously to (\ref{surpbgrp}), and the proposition is proved.
\end{proof}

Note, that if $Z$ is not $G$-saturated the Taylor morphism $\sfJ^\infty_{Z}$ is not $G$-equivariant, and we 
can not produce groupoid invariant Whitney functions by averaging over $G$. 
We will use Theorem 3.6 from Bierstone and Milman's article \cite{BieMilCDF} to prove 
the surjectivity for the groupoid case. The following result is a crucial tool  for this argument.

\begin{proposition} \label{idiso} 
Let $G$, $V$ be as above, and $Z\subset V$ a locally closed subset. Then the  map 
$\id^{\sharp}_{Z,G.Z}: \sfJ^\infty(G.Z)^G\to \sfJ^\infty(Z)^{(G\ltimes V)_{|Z}}$ 
is an isomorphism of Fr{\'e}chet algebras.
\end{proposition}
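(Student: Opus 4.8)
The plan is to recognize $\id^{\sharp}_{Z,G.Z}$ as the restriction map $\operatorname{res}^{G.Z}_{Z}$ (cf.\ the Remark following Theorem \ref{Thm:PullbackJet}) and to prove that it is a continuous algebra homomorphism which is moreover a bijection onto $\sfJ^\infty(Z)^{(G\ltimes V)_{|Z}}$; the statement that it is an \emph{isomorphism} of Fr\'echet algebras will then follow formally. First I would record the structural facts: restriction of jets is continuous and linear, since for a compact $K\subset Z\subset G.Z$ the seminorm of $\operatorname{res}^{G.Z}_Z F$ over $K$ equals that of $F$ over $K$; it is an algebra homomorphism for the jet product; and since each $(\Phi_g)^{\sharp}$ acts by algebra automorphisms, the invariance conditions cut out \emph{subalgebras}. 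Because (Inv\ref{Ite:Action}) is, for each arrow, a continuous linear constraint, both $\sfJ^\infty(G.Z)^{G}$ and $\sfJ^\infty(Z)^{(G\ltimes V)_{|Z}}$ are closed subspaces, hence Fr\'echet algebras in their own right. Finally, if $F\in\sfJ^\infty(G.Z)^{G}$ and $(g,z)$ is an arrow of $(G\ltimes V)_{|Z}$, then $z,g.z\in Z$ and the $G$-invariance of $F$ at these two points is exactly the groupoid-invariance of $\operatorname{res}^{G.Z}_Z F$; so the map does land in $\sfJ^\infty(Z)^{(G\ltimes V)_{|Z}}$.

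For injectivity I would use that every orbit meeting $G.Z$ already meets $Z$: writing $w=g.z$ with $z\in Z$, condition (Inv\ref{Ite:Action}) reads $(\Phi_g)^{\sharp}_{\{z\},\{w\}}\big(F(w)\big)=F(z)$, and since $\Phi_g$ is a linear diffeomorphism the pullback $(\Phi_g)^{\sharp}_{\{z\},\{w\}}$ on pointwise jets is invertible. Hence $F(z)=0$ for all $z\in Z$ forces $F(w)=0$ for all $w\in G.Z$. For surjectivity I would construct the inverse by $G$-equivariant extension: given $H\in\sfJ^\infty(Z)^{(G\ltimes V)_{|Z}}$ and $w=g.z$ with $z\in Z$, set
\[
   F(w):=\big((\Phi_g)^{\sharp}_{\{z\},\{w\}}\big)^{-1}\big(H(z)\big).
\]
Well-definedness is the place where the groupoid-invariance of $H$ enters: if $w=g.z=g'.z'$ with $z,z'\in Z$, then $h:=g'^{-1}g$ satisfies $h.z=z'$, so $(h,z)$ is an arrow and (Inv\ref{Ite:Action}) gives $(\Phi_h)^{\sharp}\big(H(z')\big)=H(z)$; combining this with the cocycle identity $(\Phi_{g'}\circ\Phi_h)^{\sharp}=(\Phi_h)^{\sharp}\circ(\Phi_{g'})^{\sharp}$ for jet-pullbacks (a direct consequence of Fa\`a di Bruno, cf.\ Theorem \ref{Thm:PullbackJet}) shows that the two prescriptions for $F(w)$ agree. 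The same cocycle identity shows at once that $F$ is $G$-invariant and that $\operatorname{res}^{G.Z}_Z F=H$.

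The main point still to settle --- and the step I expect to be the real obstacle --- is that the extension $F$ is a genuine jet, i.e.\ that each component $F_\alpha$ is a \emph{continuous} function on $G.Z$. To see this I would consider the auxiliary family $\widetilde F$ on $G\times Z$ whose value at $(g,z)$ is the jet at $g.z$ defined by the displayed formula; its components are continuous in $(g,z)$ because the coefficients of $\big((\Phi_g)^{\sharp}\big)^{-1}$ depend polynomially on $g$ (the map $\Phi_g$ being linear) while the $H_\beta$ are continuous on $Z$. By construction $\widetilde F$ is constant on the fibres of the orbit map $\Phi_Z:G\times Z\to G.Z$, and since $G$ is compact, $G.Z$ carries the quotient topology with respect to $\Phi_Z$ (as in Proposition \ref{Prop:TopProp}(\ref{It:QuotientTop})); therefore $\widetilde F$ descends to a continuous $F$ on $G.Z$. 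This produces $F\in\sfJ^\infty(G.Z)^{G}$ with $\operatorname{res}^{G.Z}_Z F=H$, establishing surjectivity.

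At this stage $\id^{\sharp}_{Z,G.Z}$ is a continuous algebra homomorphism that is a bijection between the Fr\'echet algebras $\sfJ^\infty(G.Z)^{G}$ and $\sfJ^\infty(Z)^{(G\ltimes V)_{|Z}}$. By the open mapping theorem for Fr\'echet spaces the inverse is automatically continuous, so no explicit estimate of the extension operator is required; being a homomorphism, the map is then an isomorphism of Fr\'echet algebras. I expect the only genuinely delicate point to be the continuity of $F$ via the quotient topology of $G.Z$; everything else --- the algebra and closedness statements, injectivity, and the cocycle computation giving well-definedness and $G$-invariance --- is routine.
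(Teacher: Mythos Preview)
Your proposal is correct and follows essentially the same route as the paper: construct the inverse $\sfE_Z$ of $\id^{\sharp}_{Z,G.Z}$ by the formula $F(g.z)=(\Phi_{g^{-1}})^{\sharp}\big(H(z)\big)$, verify well-definedness via the groupoid invariance of $H$ together with the cocycle identity for jet pullbacks, and obtain continuity of the components from the quotient-topology statement of Proposition~\ref{Prop:TopProp}(\ref{It:QuotientTop}). The only cosmetic difference is that you invoke the open mapping theorem for continuity of the inverse, whereas the paper simply declares the continuity of $\sfE_Z$ ``clear by construction'' from the explicit formula.
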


\begin{proof} Let us write for the action of $g\in G$: $v\mapsto g.v=\Phi_g(v)$. Let us also use the short 
hand notation $\Gamma_Z:= (G\ltimes V)_{|Z}$.

Our aim is to write down an inverse map for the restriction of $\id^{\sharp}_{Z,G.Z}$ to the space of invariant 
jets on $G.Z$. So let us pick a $\Gamma_Z$-invariant jet $F=(F_\alpha)_{\alpha\in \mathbb N^n}$ on $Z$, a point 
$v=g.z\in G.Z$, and declare 
$\widetilde F (v) = \big( \widetilde F_{\alpha} (v)\big)_{\alpha\in \mathbb N^n}  \in \sfJ^\infty(\{v\})$ 
using the pullback with respect to the diffeomorphism $\Phi_{g^{-1}}$:
\begin{eqnarray}\label{invextension}
\widetilde F(v) :=\Phi^{\sharp}_{g^{-1}} \big(  F ( z) \big) = (\Phi^{\sharp}_{g^{-1}} F)(g.z) \: .
\end{eqnarray}
Let us prove that this definition does not depend on the choice of $z$ and $g$.
To this end, assume that $g.z=g'.z'$ for some $g'\in G$ and $z'\in Z$. Setting $h:=(g')^{-1}g$ we have that $h.z=z'$, that is, $(h,z)\in\Gamma_Z$. With these preparations we are ready to calculate
\begin{eqnarray*}
\widetilde F(g'.z')&=&(\Phi^{\sharp}_{g'^{-1}} F)(g'.z')=(\Phi^{\sharp}_{hg^{-1}} F)(g.z)\\
&=&\left(\Phi^{\sharp}_{g^{-1}}(\Phi^{\sharp}_{h}F)\right)(g.z)\stackrel{*}{=}(\Phi^{\sharp}_{g^{-1}}F)(g.z)=\widetilde F(g.z),
\end{eqnarray*}
where we have used at``$*$'' that, by assumption, $F$ is $\Gamma_Z$-invariant.
As in the defining equation (\ref{invextension}) $\widetilde F (g.z)$ clearly depends continuously on 
$z$ and $g$, and $G\times Z \rightarrow G.Z$ is a topological identification by (\ref{It:QuotientTop}) in 
Prop.~\ref{Prop:TopProp},
$\widetilde F=(\widetilde F_{\alpha})_{\alpha\in \mathbb N^n}$  actually defines a jet in $\sfJ^\infty(G.Z)$.

It remains to check that $\widetilde F=(\widetilde F_{\alpha})_{\alpha\in \mathbb N^n}$ is $G$-invariant. 
To this end let $h \in G$, and $v =g.z\in G.Z$. Then 
\begin{eqnarray*}
\Phi^{\sharp}_{h} \big( \widetilde F(h.v) \big) &=& \Phi^{\sharp}_{h} \left( \Phi^{\sharp}_{(hg)^{-1}} \big( F (z)\big)\right) 
= \Phi^{\sharp}_{h} \left( \Phi^{\sharp}_{h^{-1}}\Phi^{\sharp}_{g^{-1}} \big( F (z) \big) \right) \\
&=& \Phi^{\sharp}_{g^{-1}} \big( F (z) \big)=  \widetilde F(v) ,
\end{eqnarray*}
which means that $ \widetilde F$ is $G$-invariant. 

Finally, we have to convince ourselves that the linear map 
\begin{eqnarray}
\qquad\sfE_Z : \sfJ^\infty(Z)^{(G \ltimes V)_{|Z}}\to \sfJ^\infty(G.Z)^G,\quad 
  F=(F_\alpha)_{\alpha\in \mathbb N^n}\mapsto\widetilde F=(\widetilde F_{\alpha})_{\alpha\in \mathbb N^n}
\end{eqnarray} 
is actually a  continuous right inverse to the injective map 
$\id^{\sharp}_{Z,G.Z}: \sfJ^\infty(G.Z)^G\to \sfJ^\infty(Z)^{(G \ltimes V)_{|Z}}$, but this is clear by construction.
The claim follows.
\end{proof} 
We now come to the main result which will be needed for proving Theorem \ref{maindiag}. 
\begin{proposition} \label{surgrpd} 
Under the assumptions of Theorem \ref{maindiag}  the following statements hold true.
\begin{enumerate}
\item \label{surJgrpd} The Taylor morphism $\sfJ^\infty_{Z}: \calC^\infty(U)^G\to 
\calE^\infty(Z)^\textup{inv}$ is onto with kernel 
\[
 \calJ^\infty(Z,U)^G=\calJ^\infty(G.Z,U)^G .
\]
\item \label{surpbgrpd} The image of the pullback 
$\rho^{\sharp}_{Z,\rho(Z)}:\calE^\infty(\rho(Z))\to \calE^\infty(Z)$ 
is actually the space of invariant Whitney functions $\calE^\infty(Z)^\textup{inv}$.
\end{enumerate}
\end{proposition}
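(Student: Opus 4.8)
The plan is to deduce both assertions from the already settled $G$-stable case (Proposition~\ref{surgrp}) by transporting the problem from the non-saturated set $Z$ to its saturation $G.Z$ via the jet isomorphism $\sfE_Z$ of Proposition~\ref{idiso}. First I note that (\ref{surpbgrpd}) will follow once the surjectivity in (\ref{surJgrpd}) is known: writing an invariant Whitney function as $F=\sfJ^\infty_Z(f)$ with $f\in\calC^\infty(U)^G$, the Theorem of Schwarz and Mather factors $f=\rho_{|U}^*(h)$ with $h\in\calC^\infty(W)$, and the commutativity $\sfJ^\infty_Z\circ\rho_{|U}^*=\rho^\sharp_{Z,\rho(Z)}\circ\sfJ^\infty_{\rho(Z)}$ of Theorem~\ref{Thm:PullbackJet} gives $F=\rho^\sharp_{Z,\rho(Z)}\big(\sfJ^\infty_{\rho(Z)}(h)\big)$; together with the inclusion (\ref{impbgrpd}) of Proposition~\ref{surgrp} this pins the image down exactly. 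The kernel statement in (\ref{surJgrpd}) is immediate: $f\in\calC^\infty(U)^G$ lies in $\Kern\sfJ^\infty_Z$ exactly when $f$ is flat on $Z$, i.e.\ $f\in\calJ^\infty(Z,U)^G$, and this space equals $\calJ^\infty(G.Z,U)^G$ by Proposition~\ref{BSrefinement}.

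The substance is therefore the surjectivity of $\sfJ^\infty_Z:\calC^\infty(U)^G\to\calE^\infty(Z)^\textup{inv}$. Given $F\in\calE^\infty(Z)^\textup{inv}$, it is in particular $(G\ltimes V)_{|Z}$-invariant, so Proposition~\ref{idiso} produces a $G$-invariant jet $\widetilde F:=\sfE_Z(F)\in\sfJ^\infty(G.Z)^G$ whose restriction $\id^\sharp_{Z,G.Z}(\widetilde F)=\res^{G.Z}_Z(\widetilde F)$ is $F$ (here I use that $Z=Z\cap G.Z$ is closed in $G.Z$, by Proposition~\ref{Prop:TopProp}(\ref{It:Closed})). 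If I can show that $\widetilde F$ is a genuine Whitney function, i.e.\ $\widetilde F\in\calE^\infty(G.Z)$, then $\widetilde F\in\calE^\infty(G.Z)^G=\calE^\infty(G.Z)^\textup{inv}$ by Proposition~\ref{surgrp}(\ref{surInvRels}); Proposition~\ref{surgrp}(\ref{surJgrp}) then yields $f\in\calC^\infty(U)^G$ with $\sfJ^\infty_{G.Z}(f)=\widetilde F$, and restricting to $Z$ gives $\sfJ^\infty_Z(f)=\res^{G.Z}_Z(\widetilde F)=F$, as desired.

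The main obstacle is exactly the step $\widetilde F\in\calE^\infty(G.Z)$: the isomorphism of Proposition~\ref{idiso} only transports jets and uses merely groupoid invariance, so it cannot by itself detect the Whitney estimates, and the circle example shows that groupoid invariance alone is insufficient. This is where the extra hypothesis (Inv\ref{Ite:Constant}) of $\frg$-invariance must be spent. The $\frg$-invariance of $F$ says precisely that at every $z\in Z$ the formal Taylor series $\widehat F_z$ is annihilated by all fundamental vector fields, hence is a formal invariant; by the formal (power-series) version of Schwarz's theorem it is therefore a formal power series in the components of $\widehat\rho_z$. Applying the pullbacks $\Phi^\sharp_{g^{-1}}$ defining $\widetilde F$ and using $\rho\circ\Phi_g=\rho$, I get that at every point $v\in G.Z$ the expansion $\widehat{\widetilde F}_v$ is a formal power series in $\widehat\rho_v$, with the series depending only on $\rho(v)$ thanks to the $G$-invariance of $\widetilde F$; that is, $\widetilde F$ is formally composite with $\rho$. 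Since $\rho_{|U}:U\to W$ is proper and Gabrielov regular (Proposition~\ref{Prop:GabrielovRegularityPolynomials}) with $\rho(U)\subset W$ Nash subanalytic (Proposition~\ref{Prop:TopProp}(\ref{It:ExOpNbhd})), I then invoke Bierstone and Milman's composite function theorem \cite[Thm.~3.6]{BieMilCDF} to promote formal compositeness to genuine compositeness, obtaining $H\in\calE^\infty(\rho(Z))$ with $\widetilde F=\rho^\sharp_{G.Z,\rho(Z)}(H)$; in particular $\widetilde F\in\calE^\infty(G.Z)$. I expect the delicate point to be verifying that the formally composite datum carried by $\widetilde F$ meets the hypotheses of \cite[Thm.~3.6]{BieMilCDF}, i.e.\ the passage from the formal to the actual solution, which is what the regularity of $\rho$ buys us. (As an alternative, one could prove $\widetilde F\in\calE^\infty(G.Z)$ by a direct uniform-estimate argument: for compact $L\subset G.Z$ one has $L\subset G.K$ with $K:=G.L\cap Z$ compact in $Z$, and compactness of $G$ bounds the family $\{\Phi_g\}_{g\in G}$; the slice theorem and the orbit-type hypotheses (OT1)--(OT3) split the comparison of Taylor remainders into an orbit-tangential part, controlled by $\frg$-invariance, and a transverse part, controlled by the pullback estimate \eqref{PullBackIsContinuous} applied uniformly in $g$.)

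Once $\widetilde F\in\calE^\infty(G.Z)$ is secured, the two conclusions follow as above: (\ref{surJgrpd}) from Proposition~\ref{surgrp}(\ref{surJgrp}) by restricting $\sfJ^\infty_{G.Z}(f)=\widetilde F$ to $Z$, and (\ref{surpbgrpd}) either from the first paragraph or directly from Proposition~\ref{surgrp}(\ref{surpbgrp}) by restricting $\widetilde F=\rho^\sharp_{G.Z,\rho(Z)}(H)$ along $Z$, which gives $F=\rho^\sharp_{Z,\rho(Z)}(H)$.
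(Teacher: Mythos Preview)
Your overall strategy---reduce to the saturated case via $\sfE_Z$ and then show $\widetilde F:=\sfE_Z(F)\in\calE^\infty(G.Z)$---is exactly the paper's, and your derivation of (\ref{surJgrpd}), (\ref{surpbgrpd}) from that fact is correct. The problem is the step where you invoke \cite[Thm.~3.6]{BieMilCDF}: that theorem takes as input a \emph{Whitney function} which is formally composite and returns an actual composite. You apply it to $\widetilde F$, which at this stage is only a jet in $\sfJ^\infty(G.Z)^G$; the Whitney estimates for $\widetilde F$ are precisely what you are trying to establish, so the appeal is circular. (You flag this as ``the delicate point'', and indeed it is the whole point.) There is a second, smaller gap: the assertion that $\frg$-invariance alone makes $\widehat F_z$ a formal power series in $\widehat\rho_z$ fails when $G$ is disconnected (take $G=\Z/2$ on $\R$, $z=0$). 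One needs in addition the $G_z$-invariance of $\widehat F_z$, which is exactly what (Inv\ref{Ite:Action}) supplies since $(g,z)\in(G\ltimes V)_{|Z}$ for every $g\in G_z$.

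Both gaps are repairable along your own lines: apply Bierstone--Milman to $F\in\calE^\infty(Z)$ on $Z$ rather than to $\widetilde F$ on $G.Z$. At each $z\in Z$ the pair (Inv\ref{Ite:Action})$+$(Inv\ref{Ite:Constant}) gives $G_z$-invariance and $\frg$-invariance of $\widehat F_z$; via the slice theorem and Schwarz's theorem this yields $\widehat F_z=H_z\circ\widehat\rho_z$, and (Inv\ref{Ite:Action}) for points in the same orbit shows the $H_z$ depend only on $\rho(z)$, so $F$ is formally composite with $\rho$ on $Z$. Now \cite[Thm.~3.6]{BieMilCDF} applies legitimately (to the Whitney function $F$) and gives $F=\rho^\sharp_{Z,\rho(Z)}H$; then $\widetilde F=\rho^\sharp_{G.Z,\rho(Z)}H\in\calE^\infty(G.Z)^G$ by uniqueness in Proposition~\ref{idiso}. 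This is genuinely different from the paper's route, which proves the analogue of ``formally composite $\Rightarrow$ composite'' only on the deepest orbit-type stratum (Step~2(iv) for points of $N_v^{G_v}$ and Step~3 via the local Hilbert map $\lambda$), and then runs an induction on depth, gluing with the subanalytic Hestenes lemma (Step~4). Your corrected argument is shorter and, notably, does not appear to use the orbit-type hypotheses (OT1)--(OT3); the paper's inductive scheme needs them to ensure the deepest stratum sits in the fixed locus of the slice so that Step~2(iv) applies.
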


\begin{proof} 
Consider the isomorphism 
$\id^{\sharp}_{Z,G.Z}:\sfJ^\infty (G.Z)^G \rightarrow \sfJ^\infty (Z)^{(G \ltimes V)_{|Z}}$ 
from Proposition \ref{idiso}, and its inverse $\sfE_Z$. Assume that we can show that                  
$\sfE_Z$ maps the invariant space $\calE^\infty (Z)^\textup{inv}$ to $\calE^\infty (G.Z)^G$. 
Since the image of 
$\calE^\infty (G.Z)^G$ under $\id^{\sharp}_{Z,G.Z}$ obviously lies in $\calE^\infty (Z)^\textup{inv}$
it then follows that $\id^{\sharp}_{Z,G.Z}$ restricts to an isomorphism 
$\iota_{Z} : \calE^\infty (G.Z)^G \rightarrow \calE^\infty (Z)^\textup{inv}$. 
In view of the commutativity of the diagram
\begin{eqnarray*}
 \xymatrix{\calC^\infty(U)^G\ar[r]^{\sfJ^\infty_Z\quad}\ar[dr]_{\sfJ^\infty_{G.Z}}&\calE^\infty(Z)^\textup{inv}\\
&\calE^\infty(G.Z)^G\:,\ar[u]_{\iota_{Z}}}
\end{eqnarray*}
statement (\ref{surJgrpd}) then is a consequence of Proposition \ref{surgrp}(\ref{surJgrp}).
Similarly, statement (\ref{surpbgrpd}) follows from the commutativity of the diagram
\begin{eqnarray*}
 \xymatrix{\calE^\infty(\rho(Z)) \ar[r]^{\rho^{\sharp}_{Z,\rho (Z)}}\ar[dr]_{\rho^{\sharp}_{G.Z,\rho (Z)}}&\calE^\infty(Z)^\textup{inv}\\
&\calE^\infty(G.Z)^G\ar[u]_{\iota_{Z}}}
\end{eqnarray*}
together with
Proposition \ref{surgrp}(\ref{surpbgrp}), if $\iota_Z$ is an isomorphism.

To prove the proposition, we therefore only need to show that $\sfE_Z$ maps the invariant space 
$\calE^\infty (Z)^\textup{inv}$ to $\calE^\infty (G.Z)^G$ or in other words that $\iota_Z$ is onto.
We will prove the claim by induction on the depth of the stratification 
of $Z$. Recall that by assumption, $Z$ is stratified by orbit types. Let us divide the argument in 
several steps.

{Step 1.} Denote by  $d$ the depth of the stratified space $Z$, and assume that for 
every subanalytic set $Y\subset V$ of depth less than $d$ such that $Y$ is closed in some $G$-invariant 
open subset of $V$ the map 
$\iota_{Y} : \calE^\infty (G.Y)^G \rightarrow \calE^\infty (Y)^\textup{inv}$ is an isomorphism. 
Let $S \subset Z$ be the stratum of $Z$ of highest depth. Then $S$, and hence $G.S$ are 
closed in $U$. Assume that we can show
\begin{enumerate}[({Claim} A)]
\item For every $z\in S$ there is an  $G$-invariant open semialgebraic neighborhood $O\subset U$ 
      such that $\sfE_{S\cap O}$ maps $\calE^\infty (S \cap O)^\textup{inv}$ to $\calE^\infty (G.S \cap O)^G$.
\end{enumerate}
Then, using a $G$-invariant locally finite partition of unity subordinate to the neighborhoods $O$ 
from Claim A, one derives that for every $F \in \calE^\infty (Z)^\textup{inv}$ the jet 
$\sfE_{S} \big(\operatorname{res}^Z_S F \big)$  is an invariant Whitney function on $G.S$, i.e.~an element of
$\calE^\infty (G.S)^G$. Now choose a $G$-invariant smooth function $f: U \rightarrow \R$ such that
$\sfJ^\infty_{G.S} (f) = \sfE_{S} \big(\operatorname{res}^Z_S F \big)$. Then $\widetilde F := F - \sfJ^\infty_{Z} (f)$ is a Whitney function on 
$Z$ such that $\widetilde F (z) =0$ for all $z\in S$. 
Now consider 
\begin{enumerate}[({Claim} B)]
\item Let $ E \in \calE^\infty (Z)^\textup{inv}$ such that $E (z) =0$ for all $z\in S$
  and such that $\sfE_{Z\setminus S} (\operatorname{res}^Z_{Z\setminus S} E) \in \calE^\infty (G. Z\setminus G.S )^G$. 
  Then $\sfE_Z (E)$ is an invariant Whitney function on $G.Z$. 
\end{enumerate}
Under the assumption that Claim B holds true, it then follows that $\sfE_Z (\widetilde F)$ is invariant, hence
\[
   \sfE_Z ( F )  = \sfE_Z (\widetilde F) + \sfJ^\infty_{G.S} (f)  \in \calE^\infty (G.Z)^G \: .
\]
But this finishes the inductive step, hence the proposition follows.  
It therefore remains to show Claim A and Claim B. 
In Step 2, we provide an auxiliary result which in Step 3  will be used to derive Claim A.
Claim B will be shown in Step 4.

{Step 2.} Fix a point $z$ in the stratum $S$ of $Z$ of highest depth. 
Denote by $\calO$ the $G$-orbit through $z$. Then choose $\varepsilon >0 $ such that 
the tubular neighborhood
\[
  T_\calO:= \{ v \in V \mid d(v,\calO) < \varepsilon \} 
\]
is contained in $U$ and such that the exponential map 
\[ \exp:TV \cong V\times V\rightarrow V, \: (v,w) \mapsto v+w \]
maps a $G$-invariant neighborhood $\widetilde O$ of the zero section of the normal bundle 
$\pi : N \rightarrow \calO$ equivariantly  onto $T_\calO$. 
 
Next, observe that the $G$-action on the normal bundle $N$ induces an action of the 
transformation groupoid 
$G\ltimes \calO$  on $N$ (see \cite[Sec.~3]{PflPosTanGOSPLG}). This means, that for every pair 
$(h,v) \in G \times \calO$ there is a (linear and isometric) isomorphism 
$\Psi_{(h,v)} : N_v \rightarrow N_{h.v}$ such that $\Psi_{(h',hv)} \Psi_{(h,v)}= \Psi_{(h'h,v)}$ for all
$h',h\in G$ and $v\in \calO$. In particular, if $hz = h'z$, then $ h'^{-1}h \in G_z$, and  
$\Psi_{h'^{-1}h,z} = \Psi_{(h'^{-1},hz)} \Psi_{(h,z)} $ is the standard action of  $ h'^{-1}h$ on $N_z$.
In other words, this means that $\Psi$ extends the natural action of the isotropy group $G_z$
on the normal space $N_z := T_z\calO^\perp$.  

After these preliminaries we now want to construct a smooth map $\lambda :N \rightarrow \R^k$
for some $k\in \N$ such that the following properties hold true:
\begin{enumerate}[(i)]
\item \label{Ite:SecProj}
      The map $ \lambda$ is $G$-invariant.
\item \label{Ite:GabProp}
      The map $\lambda$ is Gabrielov regular at each point of its domain 
      (see Appendix \ref{App:Gabrielov} or \cite{PawGRCAM} for Gabrielov regularity).
\item \label{Ite:HilMap} For every $v\in \calO$, the
  restriction $\lambda_v := \lambda_{|N_v} : N_v\rightarrow \R^k$ 
  is a minimal Hilbert map for the $G_v$-representation space $N_v$.
\item \label{Ite:FormComp}
      For every $v\in \calO$ and $n\in N_v^{G_v}$, a formal power series 
      $F\in \sfJ^\infty (\{n\})$ is invariant if and only if it is a composition with 
      $\lambda^{\sharp}_{\{n\},\{\lambda(n)\}} $.  
\end{enumerate}

To define $\lambda$, choose a minimal complete set of $G_z$-invariant polynomials
$\lambda_{1},\dots,\lambda_{k}$ on $N_z$, and denote by $\lambda_z : N_z \rightarrow \R^k$ 
the corresponding Hilbert map. 
Obviously, $\lambda_z$ is polynomial, hence according to 
Proposition \ref{Prop:GabrielovRegularityPolynomials}
$\lambda_z$ is regular in the sense of Gabrielov at each 
point of $N_z$.
Next we construct  $\lambda_v :N_v \rightarrow \R^k$ for arbitrary $v\in \calO$. 
To this end, choose an $h\in G$ such that $hv =z$. Then put
$\lambda_v (n) = \lambda_z \circ \Psi_{(h,v)} (n)$ for $n\in N_v$.
Now we can define:
\[
 \lambda : N \rightarrow \R^k, \quad n \mapsto  \lambda_{\pi (n)} (n) .  
\]
We have to show that each $\lambda_v$ does not depend on the particular choice of $h$,
and that $\lambda$ is  smooth. So let $h'\in G$ be another group element with 
$h'v =z$. Then 
\[
 \lambda_z \circ \Psi_{(h',v)} = \lambda_z \circ \Psi_{(h'h^{-1},z)} \circ \Psi_{(h,v)} =
  \lambda_z \circ \Psi_{(h,v)} ,
\]
since $h'h^{-1}\in G_z$, and $\lambda_z$ is $G_z$-invariant. Therefore $\lambda_v$
does not depend on the particular choice of $h\in G$ with $hv=z$. 
Next observe that for such $v$ and $h$ one can find  an open neighborhood $Q\subset \calO$ of $v$ and
an embedding $\sigma : Q \rightarrow G$ such that $\sigma (v) =h$ and $\sigma (w)w =z$ for all $w\in Q$.
Given such a map $\sigma$, denote by $\Sigma :N_{|Q} \rightarrow N_z$ the submersion  
$n \mapsto \Psi_{(\sigma \pi (n),\pi(n))} (n)$. Then, the restriction of $\lambda$ to 
$N_{|Q}$  is the composition $\lambda_z \circ \Sigma$, which shows that $\lambda$ is smooth.

By construction, properties  (\ref{Ite:SecProj}) and (\ref{Ite:HilMap}) are obvious. 
Moreover, $\lambda$ is Gabrielov regular by the following argument. It suffices to verify that 
$\lambda$ is Gabrielov regular on the restricted bundle $N_{|Q}$, where $Q$ is a neighborhood 
of $v$ as before. Since $\lambda_z$ is Gabrielov regular, and $\Sigma : N_{|Q} \rightarrow N_z$ 
a submersion, the composition $\lambda_z \circ \Sigma$ has to be Gabrielov regular as well. 
But over $N_{|Q}$, $\lambda$ coincides with $\lambda_z \circ \Sigma$, which proves 
(\ref{Ite:GabProp}).

It remains to check (\ref{Ite:FormComp}). So let $n\in N_v^{G_v}$ for some $v\in \calO$. 
Then $\tau : N_v \rightarrow N_v$, $\tilde n \mapsto \tilde n  + n$ is a $G_v$-equivariant
affine isomorphism.
Consider a formal power series $F\in \sfJ^\infty (\{n\})^\textup{inv} $, where $n\in N_v^{G_v}$. 
Let $i: N_v \rightarrow N$ be the canonical embedding. Note that $i$ is also $G_v$-equivariant. 
Hence $\tilde F := \tau^{\sharp}_{\{0\},\{n\}} i^{\sharp}_{\{n\},\{n\}} F $ is a $G_v$-invariant formal power 
series at the origin of $N_v$. Thus, by the Theorem of Schwarz and Mather \ref{SchwaSFIACLG}, there exists a
smooth $\tilde f\in \calC^\infty (\R^k)$ such that 
$\tilde F = \sfJ^\infty_{\{ 0\}} \big( \tilde f\circ \lambda_v\big)$. 
Let us show that
$F = \sfJ^\infty_{\{ n \}} \big( f \circ \lambda \big) $, where 
$f (u ) = \tilde f ( u -\lambda_v (n))$, $u\in \R^k$.  Since the submanifolds $N_v \subset N$ 
and $G.\{ n \} \subset N$ are transversal at $n$, it suffices to show that 
\begin{eqnarray}
  \label{eq:first}
   i^{\sharp}_{\{n\},\{n\}} F  & = & i^{\sharp}_{\{n\},\{n\}} \sfJ^\infty_{\{ n \}} \big( f \circ \lambda \big) \: 
   \text{ and } \\
   \label{eq:second}
   \xi_N  \Big( \sfJ^\infty_{\{ n \}} \big( f \circ \lambda \big)\Big) & = & 0 \:
  \text{ for all $\xi \in \frg$}. 
\end{eqnarray}
To verify these equations observe  that $\lambda_v^* f = (\lambda_v^*\tilde f )\circ \tau^{-1}$, hence 
\[
\begin{split}
   i^{\sharp}_{\{n\},\{n\}} F & = \big(\tau^{-1}\big)^{\sharp}_{\{ n \},\{0\}} \tilde F = 
   \big(\tau^{-1}\big)^{\sharp}_{\{ n \},\{0\}} \sfJ^\infty_{\{ 0\}} \big( \tilde f\circ \lambda_v\big) = \\
    & = \sfJ^\infty_{\{ n \}} \big( \tilde f\circ \lambda_v\circ \tau^{-1}\big) =
   \sfJ^\infty_{\{ n \}} \big( f \circ \lambda_v \big) =
   i^{\sharp}_{\{n\},\{n\}} \sfJ^\infty_{\{ n \}} \big( f \circ \lambda \big) . 
\end{split}
\]
This proves Eq.~(\ref{eq:first}). 
Since by construction of $\lambda$, the composition $f\circ \lambda$ is constant on each orbit 
of $G$, Eq.~(\ref{eq:second}) follows, too. 
So we now get 
\[ 
 F = \sfJ^\infty_{\{ n \}} \big( f \circ \lambda \big) 
   =  \lambda^{\sharp}_{\{n\},\{\lambda(n)\}} \sfJ^\infty_{\{ \lambda(n) \}} ( f )
\]
which proves one direction of (\ref{Ite:FormComp}). The other direction is immediate. 

{Step 3.} Let the stratum $S \subset Z$, the point $z \in S$ and the tubular neighborhood $T_\calO$ 
around the orbit $\calO$ through $z$ as in Step 2. Put $O:=T_\calO$. 
In this step, we want to show that
\[
 \iota_{S\cap O}: \calE^\infty(G.S\cap O)^G\to \calE^\infty(S\cap O)^\textup{inv} 
\] 
is onto. To this end, we first transform the claim to an equivalent statement about 
Whitney functions on certain subsets of the normal bundle $N$. 
Observe that $\calO\subset V$ is a Nash submanifold by \cite[A.~10]{BryIKGNO}
and that $O = T_\calO$ is a Nash tubular neighborhood 
of $\calO$ by \cite[Cor.~8.9.5]{BocCosRoyRAG}, so in particular semialgebraic. 
Morever, the restriction 
$\exp_{|\widetilde O}$ of the exponential map to $\widetilde O = \exp^{-1} (T_\calO) \cap N$ 
is a Nash diffeomorphism from $O$ to  $\widetilde O$. 
This implies in particular that $\widetilde S = \exp^{-1} (S \cap O) $ is a closed
analytic submanifold of $\widetilde O$. Moreover, it entails that the claimed surjectivity of 
$\iota_{S\cap O}$ is equivalent to 
\[
  \iota_{\widetilde S} := \big( \id^{\sharp}_{\widetilde S, G.\widetilde{S}}\big)_{|\calE^\infty(G.\widetilde{S})^G} : 
  \calE^\infty(G.\widetilde{S})^G\to \calE^\infty(\widetilde S)^\textup{inv} 
\]
being onto. So let us prove this. Choose an open subset $\Omega \subset \R^k$ such that 
$\lambda (\widetilde S)$ is closed in $\Omega$; this is possible by Prop.~\ref{Prop:TopProp} (\ref{It:ExOpNbhd}). 
The restriction $\lambda_{|\widetilde O} :\widetilde O \rightarrow \Omega$ now is a proper analytic map.  
Next consider the morphism 
\begin{equation}
\label{eq:lambdasharp}
  \lambda_{\widetilde S, \lambda(\widetilde S)}^{\sharp} : \calE^\infty ( \lambda(\widetilde S) ) \rightarrow 
  \calE^\infty (\widetilde S).
\end{equation}
We claim that the image of this map is $\calE^\infty (\widetilde S)^\textup{inv}$.
Since $\lambda_{|\widetilde O}$ is proper and Gabrielov regular at each point of $\widetilde O$ by Step 2, 
we can apply Theorem 3.6  by Bierstone--Milman \cite{BieMilCDF}. Thus, the following equality holds true:
\begin{equation}
  \label{Eq:ImEquiv}
  \lambda_{\widetilde S, \lambda(\widetilde S)}^{\sharp} \calE^\infty (\lambda(\widetilde S)) =
  \overline{\lambda_{\widetilde S, \lambda(\widetilde S)}^{\sharp} \calE^\infty (\lambda(\widetilde S))}=
  \big( \lambda_{\widetilde S, \lambda(\widetilde S)}^{\sharp} \calE^\infty (\lambda(\widetilde S))\big)^\wedge ,
\end{equation}
where the last term denotes the algebra of Whitney functions on $\widetilde S$ which
are a formal composite with $\lambda$. 
By  (\ref{Ite:FormComp}) in Step 2 one concludes that $F \in  \calE^\infty (\widetilde S)$ is  a formal composition with $\lambda$
if and only if $F$ is an element of $\calE^\infty (\widetilde S)^\textup{inv}$. Hence the 
image of the map  (\ref{eq:lambdasharp}) is $\calE^\infty (\widetilde S)^\textup{inv}$ indeed. 
Now consider the commutative diagram:  
 \begin{eqnarray*}
\xymatrix{
 \calE^\infty (\lambda (\widetilde S )) 
 \ar[rr]^{\mbox{ }\quad \lambda_{G.\widetilde{S},\lambda (G. \widetilde{S} )}^{\sharp}}
 \ar[drr]_{\mbox{ }\quad \lambda_{\widetilde{S},\lambda ( \widetilde{S} )}^{\sharp}}&  &
 \calE^\infty(G.\widetilde{S})^G \ar[d]^{\iota^{\sharp}_{\widetilde{S}}}
 \\ & & \calE^\infty(\widetilde{S})^\textup{inv}
}
\end{eqnarray*} 
Since by the above 
$\lambda_{\widetilde{S},\lambda ( \widetilde{S} )}^{\sharp} (\calE^\infty (\lambda (\widetilde{S})))=
\calE^\infty(\widetilde{S})^\textup{inv}$, it follows that the vertical arrow in the 
diagram is surjective. This finishes Step 3.

{Step 4.} Again let $S \subset Z$ be  a stratum of highest depth.  
Assume that $ E \in \calE^\infty (Z)^\textup{inv}$ is an invariant Whitney function
such that $E (z) =0$ for all $z\in  S$ and that  
$\sfE_{ Z \setminus S} (\operatorname{res}^{Z}_{Z \setminus S} E) \in 
  \calE^\infty (G. Z \setminus G. S )^G$. 
By the proof of Prop.~\ref{idiso}  one knows that  $\sfE_{Z}(E) \in \sfJ^\infty (G.Z)$. 
Moreover, 
\[ \operatorname{res}^{G.Z}_{G.S} \sfE_{Z}(E) = \sfE_{S} (\operatorname{res}^{Z}_{S} E) = 0 \]
and 
\[ \operatorname{res}^{G.Z}_{G.Z \setminus G.S} \sfE_{Z}(E) = \sfE_{Z \setminus S} (\operatorname{res}^{Z}_{Z\setminus S} E)  
   \in \calE^\infty (G. Z \setminus G. S )^G \ . \] 
By Hestenes's lemma for subanalytic sets \ref{Lem:subanalytic-hestenes-lemma} one obtains
$\sfE_{Z}(E) \in  \calE^\infty (G. Z )^G$ and Claim B is proved.
\end{proof}

\begin{proof}[Proof of Theorem \ref{maindiag}] We already know that all maps in the diagram of Theorem \ref{maindiag} are well-defined and that the diagram is in fact commutative. The middle column  is exact by the Theorem of Schwarz (cf.~Theorem \ref{SchwaSFIACLG}), the first column is exact by Proposition \ref{flatalongZ}, the middle row is exact by the Whitney Extension Theorem (cf.~Theorem \ref{WhitneyExtension}) and the first row is exact by Corollary \ref{surgrpd}. The third row is clearly exact. 

So it remains to show that the third column is exact. This can be done by an elementary diagram chase as follows. We have to show that \[\Kern\left(\rho^{\sharp}_{Z,\rho(Z)}\right)=\mathsf J^\infty_{\rho(Z)}(\mathcal J_X).\]  
To this end let us assume that $\rho^{\sharp}_{Z,\rho(Z)}(F)=0$. By Whitney's Extension Theorem there exists a function $f\in \mathcal C^\infty(\mathbb R^\ell)$ such that $\mathsf J^\infty_{\rho(Z)}(f)=F$. Moreover, since the upper right square in our diagram commmutes, $\rho^*(f)$ has to be flat along $Z$. Since the upper left arrow $\rho^*$ is onto, there exists a function $g\in \calC^\infty(\mathbb R^\ell)$ that is flat along $\rho(Z)$ such that $\rho^*(f-g)=0$. Clearly $\mathsf J^\infty_{\rho(Z)}(f-g)=F$ and $f-g\in \calJ_X$, which proves our claim.
\end{proof}

\begin{corollary} \label{corollary} 
Under the assumptions of Theorem \ref{maindiag}, the continuous linear surjections 
\begin{eqnarray*}
\rho^\sharp_{Z,\rho(Z)}&:&\calE^\infty(\rho(Z))\to \calE^\infty(Z)^\textup{inv}\\
\rho^\sharp_{G.Z,\rho(Z)}&:&\calE^\infty(\rho(Z))\to \calE^\infty(G.Z)^G
\end{eqnarray*}
induce isomorphisms of Fr{\'e}chet algebras
\begin{eqnarray*}
\xymatrix{&\calE^\infty(\rho(Z),X)\ar[dl] \ar[dr] &\\
\calE^\infty(G.Z)^G\ar[rr]_{\id^\sharp_{Z,G.Z}}&&\calE^\infty(Z)^\textup{inv}.}
\end{eqnarray*}
\end{corollary}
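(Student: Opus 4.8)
The plan is to read the corollary off Theorem \ref{maindiag} together with the isomorphism produced in the proof of Proposition \ref{surgrpd}; the statement is essentially a repackaging of those results, so the actual work lies in matching kernels with the correct defining ideals and in upgrading algebraic bijections to topological isomorphisms of Fr\'echet algebras.

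First I would pin down the kernel of $\rho^{\sharp}_{Z,\rho(Z)}$. Exactness of the third column of the diagram in Theorem \ref{maindiag} gives $\Kern\big(\rho^{\sharp}_{Z,\rho(Z)}\big) = \sfJ^\infty_{\rho(Z)}(\calJ_X)$ together with surjectivity onto $\calE^\infty(Z)^\textup{inv}$. I then claim that $\sfJ^\infty_{\rho(Z)}(\calJ_X)$ is exactly the ideal $\calJ_X\cdot\calE^\infty(\rho(Z))$ appearing in Eq.~\eqref{Eq:QR}, so that the quotient is by definition $\calE^\infty(\rho(Z),X)$. Indeed $\sfJ^\infty_{\rho(Z)}(\calJ_X)$ is already an ideal of $\calE^\infty(\rho(Z))$: writing an arbitrary Whitney function as $\sfJ^\infty_{\rho(Z)}(f)$ via Whitney's Extension Theorem \ref{WhitneyExtension} and using that $\sfJ^\infty_{\rho(Z)}$ is an algebra homomorphism, any product $\sfJ^\infty_{\rho(Z)}(h)\cdot\sfJ^\infty_{\rho(Z)}(f)=\sfJ^\infty_{\rho(Z)}(hf)$ with $h\in\calJ_X$ again lies in $\sfJ^\infty_{\rho(Z)}(\calJ_X)$, since $\calJ_X$ is an ideal of $\calC^\infty(W)$. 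Hence the first isomorphism theorem yields a continuous algebra isomorphism $\calE^\infty(\rho(Z),X)\xrightarrow{\cong}\calE^\infty(Z)^\textup{inv}$; that it is a topological isomorphism follows from the open mapping theorem, both sides being Fr\'echet spaces (the kernel is closed as the kernel of a continuous map, and $\calE^\infty(Z)^\textup{inv}$ is closed in $\calE^\infty(Z)$ by the Remark following Theorem \ref{maindiag}).

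Next I would establish commutativity of the triangle and transport the isomorphism to the second edge. Functoriality of the sharp pullback, immediate from the formula of Fa\`a di Bruno and Eq.~\eqref{Eq:DefPullbackJet} (equivalently from applying \eqref{DefPullbackWhi} to a composite), gives $\rho^{\sharp}_{Z,\rho(Z)} = \id^{\sharp}_{Z,G.Z}\circ\rho^{\sharp}_{G.Z,\rho(Z)}$, where $\id^{\sharp}_{Z,G.Z}$ is the restriction $\operatorname{res}^{G.Z}_Z$ by the Remark after Theorem \ref{Thm:PullbackJet}. By the proof of Proposition \ref{surgrpd}, the map $\iota_Z:=\big(\id^{\sharp}_{Z,G.Z}\big)_{|\calE^\infty(G.Z)^G}:\calE^\infty(G.Z)^G\to\calE^\infty(Z)^\textup{inv}$ is already an isomorphism of Fr\'echet algebras, which is precisely the bottom edge of the triangle. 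Since $\iota_Z$ is injective, $\Kern\big(\rho^{\sharp}_{G.Z,\rho(Z)}\big)=\Kern\big(\iota_Z\circ\rho^{\sharp}_{G.Z,\rho(Z)}\big)=\Kern\big(\rho^{\sharp}_{Z,\rho(Z)}\big)=\calJ_X\cdot\calE^\infty(\rho(Z))$, while Proposition \ref{surgrp}(\ref{surpbgrp}) supplies surjectivity onto $\calE^\infty(G.Z)^G$. Thus $\rho^{\sharp}_{G.Z,\rho(Z)}$ likewise descends to a topological isomorphism $\calE^\infty(\rho(Z),X)\xrightarrow{\cong}\calE^\infty(G.Z)^G$, again by the open mapping theorem. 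Dividing out the common kernel in the relation $\rho^{\sharp}_{Z,\rho(Z)}=\iota_Z\circ\rho^{\sharp}_{G.Z,\rho(Z)}$ then exhibits the induced triangle as commutative with all three edges isomorphisms.

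The main obstacle I anticipate is bookkeeping rather than a new analytic difficulty: one must carefully identify the kernel $\sfJ^\infty_{\rho(Z)}(\calJ_X)$ coming out of Theorem \ref{maindiag} with the ideal $\calJ_X\cdot\calE^\infty(\rho(Z))$ built into the definition \eqref{Eq:QR} of $\calE^\infty(\rho(Z),X)$, and confirm that every map in sight is multiplicative, so that the induced bijections are genuinely \emph{algebra} isomorphisms. The latter holds because all the maps descend from the algebra homomorphisms $\rho^*$ and $\operatorname{res}$ through the commutative squares already established, using surjectivity of the relevant Taylor morphisms. The only analytic input beyond the quoted results is the open mapping theorem for Fr\'echet spaces, used to promote the continuous algebraic bijections to topological isomorphisms.
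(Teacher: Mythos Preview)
Your proof is correct and follows essentially the same route as the paper: both use the factorization $\rho^{\sharp}_{Z,\rho(Z)} = \id^{\sharp}_{Z,G.Z}\circ\rho^{\sharp}_{G.Z,\rho(Z)}$, the fact that $\iota_Z$ is an isomorphism, and the kernel identification $\Kern(\rho^{\sharp}_{Z,\rho(Z)})=\sfJ^\infty_{\rho(Z)}(\calJ_X)=\Kern(\rho^{\sharp}_{G.Z,\rho(Z)})$ coming from Theorem~\ref{maindiag}. The only differences are presentational: you cite the proof of Proposition~\ref{surgrpd} for the isomorphism $\iota_Z$, whereas the paper re-derives injectivity of $\id^{\sharp}_{Z,G.Z}$ on $G$-invariants directly and obtains surjectivity from the factorization; and you spell out the identification $\sfJ^\infty_{\rho(Z)}(\calJ_X)=\calJ_X\cdot\calE^\infty(\rho(Z))$ needed to match the kernel with the defining ideal of $\calE^\infty(\rho(Z),X)$ in Eq.~\eqref{Eq:QR}, as well as the appeal to the open mapping theorem, both of which the paper leaves implicit.
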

\begin{proof}[Proof of Corollary \ref{corollary}]
Observe that the pullback $\id^\sharp_{Z,G.Z}$ of the identity $\id:V\to V$ associates to a Whitney function $F=(F_\alpha)_{\alpha\in \mathbb N^n}$ on $G.Z$ the Whitney function  $F_{|Z}:=({F_\alpha}_{|Z})_{\alpha\in \mathbb N^n}$ obtained by restriction to $Z$. Hence $\id^\sharp_{Z,G.Z}$ is injective. We also have  to show that the restriction of $\id^\sharp_{Z,G.Z}$ to $\calE^\infty(G.Z)^G$ is onto $\calE^\infty(Z)^\textup{inv}$. But in view of  
\begin{eqnarray}\label{idrhorho}
\id^\sharp_{Z,G.Z}\circ\rho^\sharp_{G.Z,\rho(Z)} =\rho^\sharp_{Z,\rho(Z)}\:,
\end{eqnarray}
$\id^\sharp_{Z,G.Z}$ has to be onto since $\rho^\sharp_{Z,\rho(Z)}$ is. So we have proven that the horizontal arrow in the diagram is an isomorphsm. That the other two arrows are isomorphisms follows from  $\Kern\left(\rho^\sharp_{Z,\rho(Z)}\right)=\sfJ^\infty_{\rho(Z)}(\calJ_X)=\Kern\left(\rho^\sharp_{G.Z,\rho(Z)}\right)$.
\end{proof}

\begin{appendix}
\section{Tools  from differential analysis}
For the convenience of the reader we collect in this appendix some tools from 
Differential Analysis which we need in this paper and which appear to be 
scattered through the literature. 

\subsection{The formula of Fa{\`a} di Bruno}
\label{sec:formula-faa-di-bruno}
Here we recall the combinatorial and the multiindex version of the formula 
of Fa{\`a} di Bruno. 

Thoughout this section $\varphi=(\varphi^1, \dots ,\varphi^m): U\to V$ denotes a smooth map between open subsets $U\subset \mathbb R^n$ and $V\subset \mathbb R^m$ and $f\in \calC^\infty(V)$. Coordinates for $U$ and $V$  are denoted by $x^1,\dots,x^n$ and $y^1,\dots,y^m$, respectively. We are interested in the higher order partial derivatives of $f\circ \varphi$. 

By a \emph{partition into $k\ge 1$ blocks} of the set $[\ell]:=\{1,\dots,\ell\}$ we mean a decomposition into nonempty subsets $I_1\sqcup\dots\sqcup I_k=[\ell]$, called blocks, regardless of the order of the blocks. The number of partitions of  $[\ell]$ into $k$ blocks is given by the Stirling number of the second kind $\{\tiny\begin{matrix}\ell\\k\end{matrix}\tiny\}$. In total the number of partitions of $[\ell]$ is given by the Bell numbers $B_\ell$; the first few Bell numbers are $1, 2, 5, 15, 52, 203,\dots$.

Let us choose some indices $i_1,\dots,i_\ell \in \{1,\dots m\}$. Given a subset $I=\{k_1,\dots,k_r\}\subset [\ell]$ of cardinality $r=|I|$ we introduce the shorthand notation 
\[\frac{\partial^{|I|}}{\partial x^I}:=\frac{\partial^r}{\partial x^{i_{k_1}}\dots \partial x^{i_{k_r}}}\:.\]

\begin{theorem}[Formula of Fa{\`a} di Bruno -- combinatorial version]\label{FdBcomb} With the notation from above we have for all $x\in U$
\begin{eqnarray*}
\lefteqn{\left(\frac{\partial^\ell}{\partial x^{i_1}\dots \partial x^{i_\ell}}f\circ \varphi\right)(x)}&&\\
\nonumber&=&\sum_{k=1}^\ell\quad\sum_{I_1\sqcup\dots\sqcup I_k=[\ell]}\quad\sum_{j_1,\dots,j_k=1}^m\frac{\partial^kf}{\partial y^{j_1}\dots \partial y^{j_k}}(\varphi(x))\:\frac{\partial^{|I_1|} \varphi^{j_1}}{\partial x^{I_1}}(x)\dots\frac{\partial^{|I_k|} \varphi ^{j_k}}{\partial x^{I_k}}(x),
\end{eqnarray*}
where the summation is taken over decompositions of $[\ell]=\{1,\dots,\ell\}$ into blocks as described above.\end{theorem}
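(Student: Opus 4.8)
The plan is to proceed by induction on the order $\ell$ of differentiation, with the chain rule supplying the base case and the Leibniz rule driving the inductive step. The entire substance of the proof is combinatorial rather than analytic: once the derivatives are computed term by term, one only has to match the resulting sum against a sum indexed by partitions of a larger set.

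For the base case $\ell=1$ the asserted identity reduces to the ordinary chain rule $\frac{\partial}{\partial x^{i_1}}(f\circ\varphi)(x)=\sum_{j_1=1}^m\frac{\partial f}{\partial y^{j_1}}(\varphi(x))\,\frac{\partial\varphi^{j_1}}{\partial x^{i_1}}(x)$, which agrees with the right-hand side since the only partition of $[1]$ is the singleton $\{1\}$ with $k=1$ block. For the inductive step I would assume the formula for $\ell$ and apply $\frac{\partial}{\partial x^{i_{\ell+1}}}$ to both sides. Each summand on the right is a product of the factor $\big(\frac{\partial^k f}{\partial y^{j_1}\dots\partial y^{j_k}}\big)\circ\varphi$ with the $k$ factors $\frac{\partial^{|I_a|}\varphi^{j_a}}{\partial x^{I_a}}$, so by the Leibniz rule its derivative splits into $k+1$ contributions. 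Differentiating the $f$-factor produces, via the chain rule, a new index $j_{k+1}$ summed over $\{1,\dots,m\}$ together with a factor $\frac{\partial\varphi^{j_{k+1}}}{\partial x^{i_{\ell+1}}}$; this corresponds to adjoining $\{\ell+1\}$ as a new singleton block and raises the block count from $k$ to $k+1$. Differentiating the $a$-th factor $\frac{\partial^{|I_a|}\varphi^{j_a}}{\partial x^{I_a}}$ turns it into $\frac{\partial^{|I_a|+1}\varphi^{j_a}}{\partial x^{I_a\cup\{\ell+1\}}}$, which corresponds to enlarging the block $I_a$ by the element $\ell+1$ and leaves the block count unchanged.

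The crux of the argument, and the step I expect to require the most care, is recognizing that these two operations together realize a bijection between, on the one hand, the pairs consisting of a partition of $[\ell]$ and a choice of where to place $\ell+1$, and on the other hand, the set of all partitions of $[\ell+1]$. Concretely, every partition of $[\ell+1]$ is produced exactly once: either $\{\ell+1\}$ is itself a block, in which case deleting it yields a partition of $[\ell]$ with one fewer block (the first operation); or $\ell+1$ lies in a block of cardinality at least two, in which case deleting $\ell+1$ from that block yields a partition of $[\ell]$ with the same number of blocks (the second operation). One has to verify that the derivative orders $|I_a|$ and the symbols $\frac{\partial^{|I_a|}}{\partial x^{I_a}}$ transform consistently under each operation and that no partition of $[\ell+1]$ is counted twice. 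Reindexing the double sum $\sum_{k=1}^{\ell+1}\sum_{I_1\sqcup\dots\sqcup I_k=[\ell+1]}$ according to this case distinction then reproduces exactly the claimed formula at order $\ell+1$, closing the induction. The analytic input is thus confined to the chain and Leibniz rules, while the real content is the uniqueness of this decomposition of partitions of $[\ell+1]$.
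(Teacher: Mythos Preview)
Your inductive argument via the chain and Leibniz rules is correct and is the standard proof of this identity; the bijection you describe between partitions of $[\ell+1]$ and pairs (partition of $[\ell]$, placement of $\ell+1$) is exactly the recursion underlying the Stirling numbers, and it matches the terms produced by differentiation precisely as you say. The paper itself does not supply a proof of this theorem: it simply remarks that the combinatorial version ``is easy to prove'' and reserves its argument for the multiindex formulation (Theorem~\ref{FdBmulti}), so your write-up in fact fills a gap the authors left to the reader.
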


The combinatorial version of the Fa{\`a} di Bruno formula is easy to prove, it contains no combinatorial factors and it is clear how it transforms under linear change of coordinates. Using multiindex notation, symmetry factors have to be taken into account and things look slightly more complicated. Let us consider maps of the form
\[\lambda:[m]\times (\mathbb N^n\backslash\{0\})\to \mathbb N,\qquad(i,\alpha)\mapsto \lambda_{i,\alpha}\] 
and let us denote the set of such maps by $\Lambda_{n,m}$. Picking a multiindex $\beta\in\mathbb N^n$, we observe that there are only finitely many  $\lambda\in\Lambda_{n,m}$ that solve the constraint $\sum_{i,\alpha}\lambda_{i,\alpha}\alpha=\beta$. Let us denote the set of such solutions by  $\Lambda_{n,m}(\beta)$. For each  $\lambda \in \Lambda_{n,m}(\beta)$ only finitely many $\lambda_{i,\alpha}$ are nonzero, and hence
  \[\lambda!:=\prod_{i,\alpha}\lambda_{i,\alpha}!\] is well-defined. For fixed $\alpha$, $\lambda_{\alpha}=(\lambda_{1,\alpha},\dots,\lambda_{m,\alpha})$ is viewed as a multiindex in $\mathbb N^m$. Summing these terms up we end up with $\sum_\alpha \lambda _\alpha\in\mathbb N^m$.

\begin{theorem}[Fa\`a di Bruno -- multiindex version, cf.~\cite{MichTG}]\label{FdBmulti} With the notation from above we have
  \begin{align*}
    \partial^\beta (f\circ \varphi) = \sum_{\lambda \in \Lambda_{n,m}(\beta)} \frac{\beta !}{\lambda !} \quad \left(\partial ^{\sum_{\alpha}\lambda_\alpha} f\right)\circ \varphi \quad \prod_{\alpha}\frac{\left(\partial^\alpha \varphi\right)^{\lambda_\alpha}}{(\alpha!)^{\sum_i \lambda_{i,\alpha}}}
  \end{align*}
\end{theorem}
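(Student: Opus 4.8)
The plan is to deduce the multiindex formula directly from the combinatorial version, Theorem \ref{FdBcomb}, by collecting together all the combinatorial terms that produce the same monomial in the derivatives of $f$ and $\varphi$. First I would fix the multiindex $\beta\in\N^n$, set $\ell:=|\beta|$, and choose any sequence $(i_1,\dots,i_\ell)$ of indices in $\{1,\dots,n\}$ in which each value $p$ occurs exactly $\beta_p$ times, so that $\partial^\beta=\frac{\partial^\ell}{\partial x^{i_1}\cdots\partial x^{i_\ell}}$. Since $f\circ\varphi$ is smooth, equality of mixed partials guarantees that the left-hand side is independent of the chosen ordering, so I am free to apply Theorem \ref{FdBcomb} to this particular sequence.

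Next I would reorganize the right-hand side of the combinatorial formula. Each summand is indexed by an unordered partition $\{I_1,\dots,I_k\}$ of $[\ell]$ together with a labeling $s\mapsto j_s\in\{1,\dots,m\}$ of its blocks. To a block $B$ I associate the multiindex $\alpha(B)\in\N^n$ whose $p$-th entry counts the positions $t\in B$ with $i_t=p$; then $|\alpha(B)|=|B|\ge 1$, so $\alpha(B)\ne 0$, and $\frac{\partial^{|B|}}{\partial x^B}\varphi^{j(B)}=\partial^{\alpha(B)}\varphi^{j(B)}$ by equality of mixed partials. To each combinatorial term I then attach the element $\lambda\in\Lambda_{n,m}$ defined by $\lambda_{i,\alpha}:=\#\{B:\ j(B)=i,\ \alpha(B)=\alpha\}$. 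Because the blocks partition $[\ell]$, the total multiplicity of each input coordinate is preserved, giving $\sum_{i,\alpha}\lambda_{i,\alpha}\,\alpha=\beta$, so indeed $\lambda\in\Lambda_{n,m}(\beta)$; moreover the value of the summand depends only on $\lambda$ and equals $\big(\partial^{\sum_\alpha\lambda_\alpha}f\big)\circ\varphi\ \prod_\alpha(\partial^\alpha\varphi)^{\lambda_\alpha}$, which is precisely the monomial appearing on the right-hand side of Theorem \ref{FdBmulti}.

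The heart of the argument, and the step I expect to require the most care, is the multiplicity count: for a fixed $\lambda\in\Lambda_{n,m}(\beta)$ one must show that exactly $\frac{\beta!}{\lambda!\,\prod_\alpha(\alpha!)^{\sum_i\lambda_{i,\alpha}}}$ labeled partitions are mapped to $\lambda$. I would carry this out in two stages. Treating the blocks first as distinguishable with prescribed types, I would, for each coordinate $p$, distribute the $\beta_p$ available positions among the blocks according to their target multiindices; this is a product of multinomial coefficients whose product over $p$ telescopes to $\beta!\big/\prod_{\text{blocks}}\alpha! = \beta!\big/\prod_{i,\alpha}(\alpha!)^{\lambda_{i,\alpha}}$. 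Since in an unordered partition the blocks of identical type $(i,\alpha)$ are interchangeable, each unordered labeled partition is counted $\prod_{i,\alpha}\lambda_{i,\alpha}!=\lambda!$ times, so one divides by $\lambda!$ and obtains the claimed coefficient. Summing the common term $\big(\partial^{\sum_\alpha\lambda_\alpha}f\big)\circ\varphi\ \prod_\alpha(\partial^\alpha\varphi)^{\lambda_\alpha}$ with this multiplicity over all $\lambda\in\Lambda_{n,m}(\beta)$ yields exactly the formula of Theorem \ref{FdBmulti}, after rewriting $\prod_{i,\alpha}(\alpha!)^{\lambda_{i,\alpha}}=\prod_\alpha(\alpha!)^{\sum_i\lambda_{i,\alpha}}$.

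Should the collection bookkeeping prove unwieldy, an alternative is to argue directly by induction on $|\beta|$: the case $|\beta|\le 1$ is the ordinary chain rule, and the inductive step follows by applying a single partial derivative $\partial_p$ to the formula for $\partial^\beta(f\circ\varphi)$ and matching the coefficient of each resulting monomial. I would nonetheless prefer the collection argument, since it exhibits transparently how the symmetry factors $\lambda!$ and $(\alpha!)^{\sum_i\lambda_{i,\alpha}}$ arise from the passage between ordered index notation and multiindex notation, exactly as foreshadowed in the discussion preceding the statement.
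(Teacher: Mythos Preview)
Your proof is correct, but it proceeds by a route genuinely different from the paper's. The paper does not reduce to the combinatorial version (Theorem \ref{FdBcomb}); instead it works directly with formal Taylor expansions: expand $f$ around $\varphi(a)$, expand each component $\varphi^i$ around $a$, and then use the multinomial identity
\[
\Big(\sum_{\alpha} a_\alpha \bs x^\alpha\Big)^b=\sum_{\nu:\,\sum_\alpha\nu_\alpha=b} b!\,\prod_\alpha\frac{a_\alpha^{\nu_\alpha}}{\nu_\alpha!}\;\bs x^{\sum_\alpha\nu_\alpha\alpha}
\]
to raise each $\varphi^i(x)-\varphi^i(a)$ to the required power; comparing the coefficient of $(x-a)^\beta$ then yields the multiindex formula. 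Your approach, by contrast, starts from Theorem \ref{FdBcomb} and performs an explicit combinatorial collection, counting how many labeled set partitions of $[\ell]$ give rise to a fixed $\lambda\in\Lambda_{n,m}(\beta)$. The count you describe is right: distributing, for each coordinate $p$, the $\beta_p$ positions among the (temporarily ordered) blocks gives $\beta!/\prod_{i,\alpha}(\alpha!)^{\lambda_{i,\alpha}}$, and dividing by $\lambda!$ undoes the artificial ordering among blocks of identical type (which are always distinct subsets, so no fixed points arise). The paper's route is quicker and self-contained, while yours makes the relation between the two formulations explicit and shows transparently where the symmetry factors $\lambda!$ and $(\alpha!)^{\sum_i\lambda_{i,\alpha}}$ come from.
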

\begin{proof} The proof can be achieved by combining the Taylor expansion with the following formula, which is a corollary of the multinomial theorem. For $b\in \mathbb N$ and a formal series $\sum_{\alpha\in \mathbb N^n} a_\alpha \bs x^\alpha\in \mathbb R [\![\bs x]\!]= \mathbb R[\![x_1,\dots,x_n]\!]$ we have
\[\left(\sum_{\alpha\in \mathbb N^n} a_\alpha \bs x^\alpha\right)^b=\sum_{\nu\in \mathbb N^{\mathbb N^n}\colon \sum_{\alpha\in \mathbb N^n}\nu_\alpha=b}b!\left(\prod_\alpha \frac{a_\alpha^{\nu_\alpha}}{\nu_\alpha !}\right)\: \bs x^{\sum_{\alpha}\nu_\alpha \alpha} \: .\]   
\end{proof}

\subsection{Gabrielov regularity}
\label{App:Gabrielov}

Let $\varphi: X\to Y$ be a real analytic map between real analytic manifolds
$X$ and $Y$ and let $x\in X$. Denote by $\mathcal O_{X,x}$ the algebra of 
germs of real analytic functions on $X$  at $x$ and by
$ \sfJ^\infty_{X} (\{x\})$ the algebra of formal power series 
or in other words the algebra of infinite jets on $X$ at $x$. 
Using the notation from Section \ref{sec:whitney-functions},
$\sfJ^\infty (\{x\}) = \mathcal{E}^\infty (\{ x \},X) = \calC^\infty (X)/ \calJ^\infty (\{ x \} , X) $.
Similarly, we use $\mathcal O_{Y,y}$ and 
$\sfJ^\infty (\{ y \} )$ for the corresponding algebras at 
$y\in Y$. The map $\varphi$ induces algebra morphisms
$\varphi_x^*: \mathcal O_{Y,\varphi(x)}\to \mathcal O_{X,x}$ and
$\varphi_{x,\varphi(x)}^{\sharp}:  \sfJ^\infty (\{ \varphi(x) \} )\to  \sfJ^\infty (\{x\})$,
where the latter is  given in local coordinates by Taylor expansion as in Eq.~\eqref{Eq:DefPullbackJet}.
Gabrielov \cite{GabFRAF} uses the following three notions of rank of $\varphi$ at $x$:
\begin{itemize}
\item $r_1(x)=$ rank of the Jacobian of $\varphi$ at a generic point nearby $x$,
\item $r_2(x)= \dim \big( \sfJ^\infty (\{ \varphi(x) \} ) / \ker \varphi_{x,\varphi(x)}^{\sharp} \big)$,
\item $r_3(x)=\dim \big( \mathcal O_{Y,\varphi(x)}/\ker \varphi_x^* \big)$.
\end{itemize}
One observes $r_1(x)\le r_2(x)\le r_3(x)$. The map $\varphi$ is said to be 
\emph{regular in the sense of Gabrielov at} $x$ if $r_1(x)=r_3(x)$, i.e., if the three ranks 
coincide. It is said to be \emph{regular in the sense of Gabrielov} if it has this property 
at all $x\in X$.

\begin{proposition}
\label{Prop:GabrielovRegularityPolynomials}
  Let  $\varphi: X= \R^n \to Y =\R^m$ be a polynomial mapping. Then $\varphi$ is Gabrielov regular. 
\end{proposition}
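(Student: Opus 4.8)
The plan is to exploit the chain $r_1(x)\le r_2(x)\le r_3(x)$ recorded above and to reduce the statement to commutative algebra of the coordinate ring of the Zariski closure of the image. Fix $x\in\R^n$ and put $y_0:=\varphi(x)$. Consider the ideal
\[
 I:=\{\,p\in\R[y_1,\dots,y_m]\mid p\circ\varphi=0\,\}=\ker\big(\R[y]\to\R[x],\ p\mapsto p\circ\varphi\big).
\]
Since $\R[y]/I$ is isomorphic to the subring $\R[\varphi^1,\dots,\varphi^m]$ of the integral domain $\R[x_1,\dots,x_n]$, the ideal $I$ is prime, so $V:=V(I)\subset\R^m$ is an irreducible real algebraic set, and I set $d:=\dim\R[y]/I=\dim V$. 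Note that $y_0\in V$, because every $p\in I$ vanishes on all of $\varphi(\R^n)$.

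First I would identify $r_1(x)=d$ for every $x$. By the Jacobian criterion for transcendence degree in characteristic zero, the rank of the Jacobian $(\partial\varphi^i/\partial x^j)$ over the field $\R(x)$ equals $\operatorname{trdeg}_\R\R(\varphi^1,\dots,\varphi^m)=\dim\R[y]/I=d$. The locus where the Jacobian has rank $<d$ is the proper algebraic subset cut out by the vanishing of all $d\times d$ minors, so its complement is Zariski dense, hence dense in the Euclidean topology. Thus the generic rank near any point is $d$, i.e.\ $r_1(x)=d$ for all $x$. In view of $r_1\le r_2\le r_3$ it then remains only to establish the reverse bound $r_3(x)\le d$. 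Every $p\in I$ satisfies $\varphi_x^*(p)=p\circ\varphi=0$ as a germ at $x$, so $I\,\calO_{Y,y_0}\subseteq\ker\varphi_x^*$; consequently $\calO_{Y,y_0}/\ker\varphi_x^*$ is a quotient of $\calO_{Y,y_0}/I\calO_{Y,y_0}$, and therefore
\[
 r_3(x)=\dim\big(\calO_{Y,y_0}/\ker\varphi_x^*\big)\le\dim\big(\calO_{Y,y_0}/I\calO_{Y,y_0}\big).
\]

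The whole proof thus comes down to the bound $\dim\big(\calO_{Y,y_0}/I\calO_{Y,y_0}\big)\le d$, comparing the local \emph{analytic} ring of $V$ at $y_0$ with the dimension of $V$ as an algebraic set, and this is exactly where polynomiality is decisive. Both the convergent-power-series ring $\calO_{Y,y_0}$ and the algebraic local ring $\R[y]_{\mathfrak m_{y_0}}$ have the same completion $\R[[y-y_0]]$. Since $I$ is finitely generated, completing $\calO_{Y,y_0}/I\calO_{Y,y_0}$ gives $\R[[y-y_0]]/I\,\R[[y-y_0]]$, which is precisely what one obtains by completing the algebraic local ring $(\R[y]/I)_{\mathfrak m_{y_0}}$. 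As Krull dimension is invariant under completion of a Noetherian local ring, I conclude
\[
 \dim\big(\calO_{Y,y_0}/I\calO_{Y,y_0}\big)=\dim\,(\R[y]/I)_{\mathfrak m_{y_0}}\le\dim\R[y]/I=d.
\]
Combining the two displays yields $r_3(x)\le d=r_1(x)$, so $r_1(x)=r_2(x)=r_3(x)$ for every $x$, which is Gabrielov regularity.

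I expect the last comparison to be the only genuine obstacle: one must be sure that passing from polynomial relations to convergent (and then formal) power series does not enlarge the dimension of the image germ. For a polynomial map this is guaranteed by the controlling algebraic variety $V$ together with the completion argument above, whereas for a general real-analytic (non-Nash) map no such $V$ exists and the smallest analytic germ containing the image can have dimension strictly larger than the generic rank — the phenomenon that Gabrielov regularity is designed to detect. Everything else in the argument is the standard, always-valid inequality $r_1\le r_2\le r_3$ and the elementary observation $I\,\calO_{Y,y_0}\subseteq\ker\varphi_x^*$.
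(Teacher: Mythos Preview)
Your proof is correct and takes a genuinely different route from the paper's. The paper argues in two lines: by Tarski--Seidenberg the image $\varphi(\R^n)$ is semialgebraic, and by a theorem of {\L}ojasiewicz a semialgebraic set is contained in a real algebraic set of the same dimension; from this the equality of the three ranks is read off. You instead work purely with commutative algebra: you identify the Zariski closure of the image directly as $V(I)$ with $I=\ker(\R[y]\to\R[x])$, obtain $r_1(x)=d:=\dim\R[y]/I$ from the Jacobian criterion for transcendence degree, and bound $r_3(x)\le d$ by comparing the analytic and algebraic local rings via their common completion $\R[[y-y_0]]$. Both arguments rest on the same underlying fact---a polynomial map factors through an algebraic variety whose dimension equals the generic Jacobian rank---but the paper delegates this to the cited theorems while you reconstruct it by hand. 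Your version is longer and more explicit, but it is self-contained and avoids invoking semialgebraic geometry or {\L}ojasiewicz's result; the paper's version is terser but relies on those black boxes. Either way the crux is the inequality $\dim\big(\calO_{Y,y_0}/I\calO_{Y,y_0}\big)\le d$, which you correctly isolate and justify via invariance of Krull dimension under completion of Noetherian local rings.
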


\begin{proof}
  The proof follows Bierstone \cite{BiePC}.
  By the Tarski-Seidenberg Theorem, the image of $\varphi$ is a semialgebraic subset of $Y=\R^m$.
  By  a result of {\L}ojasewicz \cite{LojESA}, \cite[Thm.~2.13]{BieMilSSS}, $\operatorname{im} \varphi$ is contained 
  in an algebraic set of the same dimension. The ranks of Gabrielov therefore coincide at every point 
  of the domain. 
\end{proof}

A class of subanalytic sets having the so-called ''composite function property'' \cite{BieMilCDF,BieMilPawCDF} 
is the one defined in the following. 

\begin{definition}[{\cite{BieMilRAFI,BieMilCDF}}] 
\label{Def:NashSubanalytic}
  A subset $X\subset N$ of a real analytic manifold $N$ is called \emph{Nash subanalytic} if $X$ is 
  the image of a proper Gabrielov regular real analytic map $\varphi : M \to N$ defined on a 
  real analytic manifold $M$.  
\end{definition}

\subsection{A generalized Hestenes lemma}
\label{sec:subanalytic-hestenes-lemma}

Last in this appendix we recall a subanalytic version of the lemma by Hestenes by 
Bierstone--Milman \cite{BieMilCDF,BieMilGDPSS} which gives a criterion when a
jet on a subanalytic set is in fact a Whitney function. 

\begin{lemma}[Hestenes's lemma for subanalytic sets, {\cite[Cor.~8.2]{BieMilCDF} \& \cite[Lem.~11.4]{BieMilGDPSS}}]
\label{Lem:subanalytic-hestenes-lemma}
Let $U \subset \R^n$ be open and $A,B  \subset U$ two relatively closed subanalytic subsets such that $B\subset A$. 
Let $G \in \sfJ^\infty (A)$ and assume that $\operatorname{res}^A_{B} G \in \calE^\infty (B)$ and
$\operatorname{res}^A_{A\setminus B} G \in \calE^\infty (A\setminus B)$. Then $G$ is a Whitney function
on $A$ that is $G \in \calE^\infty (A)$.
\end{lemma}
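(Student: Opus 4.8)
The plan is to verify directly that $G$ satisfies Whitney's remainder condition on every compact $K\subset A$. Since $G\in\sfJ^\infty(A)$ is already a jet and being a Whitney function is a local, order-by-order condition, it suffices to fix $m\in\N$, a multiindex $\alpha$ with $|\alpha|\le m$, and a compact $K\subset A$, and to establish
\[
  (\sfR^m_x G)_\alpha(y)=o\big(|x-y|^{m-|\alpha|}\big)\qquad\text{for }x,y\in K,\ |x-y|\to 0.
\]
The essential bookkeeping device is the elementary remainder cocycle identity: for any base points $x,z$ one has $\sfR^m_x G=\sfR^m_z G-\widetilde{\sfT}^m_x\big(\sfR^m_z G\big)$, since $\widetilde{\sfT}^m_x\widetilde{\sfT}^m_z G=\widetilde{\sfT}^m_z G$; componentwise this reads
\[
  (\sfR^m_x G)_\alpha(y)=(\sfR^m_z G)_\alpha(y)-\sum_{|\gamma|\le m-|\alpha|}\frac{(y-x)^\gamma}{\gamma!}\,(\sfR^m_z G)_{\alpha+\gamma}(x).
\]
This lets me transport a remainder estimate based at an auxiliary point $z$ to the remainder based at $x$, provided $|y-x|$ is comparable to the distances from $z$.

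Two cases are immediate. If $x,y\in B$, then $(\sfR^m_x G)_\alpha(y)$ depends only on the jet components of $G$ along $B$, so the desired estimate is exactly Whitney's condition for $\operatorname{res}^A_B G\in\calE^\infty(B)$. If $x,y$ stay in a fixed compact subset of the locally closed set $A\setminus B$, the estimate is Whitney's condition for $\operatorname{res}^A_{A\setminus B}G\in\calE^\infty(A\setminus B)$. The entire difficulty is therefore the \emph{uniform behaviour as the points approach $B$}: pairs $x,y$ with one or both points in $A\setminus B$ but with $\operatorname{dist}(x,B),\operatorname{dist}(y,B)\to 0$, for which neither hypothesis applies by itself.

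This is where subanalyticity is indispensable, entering through a \L{}ojasiewicz-type metric control. I would stratify $A$ subanalytically so that $B$ is a union of strata and induct on the dimension of the strata: on strata contained in $A\setminus B$ the jet is already Whitney, and the remaining work at each stage is to glue a piece on which $G$ is Whitney to lower-dimensional subanalytic data lying in $B$. The geometric fact that makes the gluing possible is \emph{regular separation}: for closed subanalytic sets $X,Y$ one has, locally near $X\cap Y$, the inequality $\operatorname{dist}(p,X)+\operatorname{dist}(p,Y)\ge C\,\operatorname{dist}(p,X\cap Y)^\nu$ for suitable $C,\nu>0$, which holds automatically because subanalytic sets are always regularly separated. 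Applied to a mixed pair $x,y$ approaching $B$, this inequality furnishes an intermediate base point $z$ for the cocycle identity with $|x-z|$ and $|z-y|$ controlled by a power of $|x-y|$; feeding the two one-sided estimates (along $B$ and along the relevant stratum) into the identity then yields the required two-sided estimate. Conceptually the present statement is the subanalytic incarnation of the classical Hestenes--Glaeser--Malgrange gluing lemma for regularly situated sets.

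The main obstacle is precisely this last step, the remainder estimate made uniform as $x,y\to B$. For an arbitrary closed set the conclusion is false, and the only ingredient that rescues it is the regular separation of the subanalytic pair $B\subset A$. Making the induction bookkeeping rigorous---selecting the intermediate points, tracking the \L{}ojasiewicz exponents $\nu$ through the chaining so that no power of $|x-y|$ is lost in the limit, and checking that the glued jet remains Whitney across all strata simultaneously---is the technical heart of the argument, and is exactly the content that is imported here from Bierstone--Milman.
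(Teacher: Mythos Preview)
The paper does not supply a proof of this lemma; it is quoted in the appendix as a result from Bierstone--Milman with a citation only. There is therefore nothing in the paper to compare your proposal against.

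That said, your outline is the right one and matches the approach in the cited references: reduce to a uniform remainder estimate on compacta, dispose of the pure cases $x,y\in B$ and $x,y$ in a fixed compact of $A\setminus B$ directly from the hypotheses, and handle the mixed case via the cocycle identity for $\sfR^m$ together with the regular separation of subanalytic sets. One point worth making explicit, since it is where a reader could object: the \L{}ojasiewicz separation exponent $\nu$ is in general $>1$, so choosing an intermediate $z\in B$ with $|x-z|\lesssim |x-y|^{1/\nu}$ degrades the order of the remainder estimate. The reason the argument still closes is that $G$ is $\calC^\infty$ (not $\calC^m$): one applies the one-sided Whitney estimates at a higher order $m'\ge \nu(m-|\alpha|)+|\alpha|$ to recover the full $o(|x-y|^{m-|\alpha|})$. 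You allude to this when you speak of ``tracking the \L{}ojasiewicz exponents \ldots so that no power of $|x-y|$ is lost,'' but it is precisely the step that fails for finite differentiability and deserves to be spelled out.
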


\end{appendix}
\bibliography{HerPflHHASFOS}
\end{document}